\providecommand{\U}[1]{\protect\rule{.1in}{.1in}}
\newtheorem{theorem}{Theorem}[section]
\newtheorem{corollary}[theorem]{Corollary}
\newtheorem{definition}[theorem]{Definition}
\newtheorem{example}[theorem]{Example}
\newtheorem{lemma}[theorem]{Lemma}
\newtheorem{proposition}[theorem]{Proposition}
\newtheorem{remark}[theorem]{Remark}
\numberwithin{equation}{section}
\begin{document}
\title[Conformal anti-Invariant submersions]{Conformal anti-invariant submersions from almost Hermitian manifolds}
\author[Akyol \lowercase{and} \c{S}ahin]{M\lowercase{ehmet} A\lowercase{kif} AKYOL, B\lowercase{ayram} \c{S}AH\.{I}N}
\address{Department of Mathematics, Faculty of Science and Arts,B\.{I}ng\"{o}l University, B\.{I}ng\"{o}l, Turkey}
\address{Department of Mathematics, Faculty of Science and Arts,\.{I}n\"{o}n\"{u} University,
44280, Malatya, Turkey}

\email{makyol@bingol.edu.tr}
\email{bayram.sahin@inonu.edu.tr}
\subjclass[2010]{53C15, 53C40, 53C50}
\maketitle

\doublespace

\section*{\textbf{Abstract}}

We introduce conformal anti-invariant submersions from almost
Hermitian manifolds onto Riemannian manifolds. We give  examples, investigate
the geometry of foliations which are arisen from the definition of a conformal
submersion and find necessary and sufficient conditions for a conformal
anti-invariant submersion to be totally geodesic. We also check the harmonicity of such submersions and show that the total space has certain product structures.  Moreover, we obtain
curvature relations between the base space and the total space, and find geometric implications of these relations.
\newline

{\small \noindent {{\bf Keywords:} Riemannian submersion, Anti-invariant submersion, Conformal
submersion, conformal anti-invariant submersion.}\newline
}

\makeatother{\section{\textbf{Introduction}}}
One of the main method to compare two manifolds and transfer certain structures from a manifold to another manifold is to define appropriate smooth maps between them. Given two manifolds, if the rank of a differential map is equal to the dimension of the source manifold, then such maps are called immersions and if the rank of  a differential map is equal to the target manifold, then such maps are called submersions. Moreover, if these maps are isometry between manifolds, then the immersion is called isometric immersion (Riemannian submanifold) and the submersion is called Riemannian submersion. Riemannian submersions  between Riemannian manifolds were studied by O'Neill \cite{O} and Gray \cite{Gray}, for recent developments on the geometry of Riemannian submanifolds and Riemannian submersions, see:\cite{Chen} and \cite{FIP}, respectively.

On the other hand, as a generalization of Riemannian submersions,
horizontally conformal submersions are defined as
follows \cite{BW}: Suppose that $(M, g_{_M})$ and $(B,
g_{_B})$ are Riemannian manifolds and $F:M\longrightarrow B$ is a
smooth submersion, then $F$ is called a horizontally conformal
submersion, if there is a positive function $\lambda$ such that
$$\lambda^2\, g_{_M}(X, Y)=g_{_B}(F_*X, F_*Y)$$
for every $X, Y \in  \Gamma((ker F_*)^\perp)$. It is obvious that
every Riemannian submersion is a particular horizontally conformal
submersion with $\lambda=1$. We note that horizontally conformal
submersions are special horizontally conformal maps which were
introduced independently by Fuglede \cite{Fuglede} and Ishihara
\cite{Ishihara}. We also note that  a
 horizontally conformal submersion $F: M \longrightarrow B$
 is said to be horizontally homothetic if the gradient of its
 dilation $\lambda$ is vertical, i.e.,
 \begin{equation}
 \mathcal{H}(grad \lambda)=0 \label{eq:1.2}
 \end{equation}
 at $p \in M,$ where $\mathcal{H}$ is the projection on the
 horizontal space $(
    ker {F_*}_p)^{\perp}$. For conformal submersions, see: \cite{BW}, \cite{Chinea2}, \cite{Chinea}, \cite{Chinea3}, \cite{FIP} and \cite{G}.

 One can see that  Riemannian submersions are very special maps comparing with conformal submersions. Although conformal maps does not preserve distance between points contrary to isometries, they preserve angles between vector fields. This property enables one to transfer certain properties of a manifold  to another manifold by deforming such properties.

A submanifold of a complex manifold is a complex (invariant) submanifold if the tangent space of the submanifold at each point is invariant with respect to the almost complex structure of the K\"{a}hler manifold. Besides complex submanifolds of a complex manifold, there is another important class of submanifolds called totally real submanifolds. A totally real submanifold of a complex manifold is a submanifold of such that the almost complex structure of ambient manifold carries the tangent space of the submanifold at each point into its normal space. Many authors have studied totally real submanifolds in various ambient manifolds and many interesting results were obtained, see (\cite{Chen}, page:322) for a survey on all these results..

 As analogue of holomorphic submanifolds, holomorphic submersions were introduced by Watson \cite{Watson} in seventies by using the notion of almost complex map. This notion has been extended to other manifolds, see\cite{FIP} for holomorphic submersions and their extensions to other manifolds. The main property of such maps is that the vertical distributions and the horizontal distributions of such maps are invariant with respect to almost complex map.  Therefore, the second author of the present paper considered a new submersion defined on an almost Hermitian manifold such that the vertical distribution is anti-invariant with respect to almost complex structure \cite{Þ1}. He showed that such submersions have rich geometric properties and they are useful for investigating the geometry of the total space. This new class of submersions which is called anti-invariant submersions can be seen as an analogue of totally real submanifolds in the submersion theory. Anti-invariant submersions have been also studied for different total manifolds, see: \cite{AF}, \cite{Lee} and \cite{ME}.

 As a generalization of holomorphic submersions, conformal holomorphic submersions were studied by Gudmundsson and Wood \cite{GW}. They obtained necessary and sufficient conditions for conformal holomorphic submersions to be a harmonic morphism, see also \cite{Chinea2}, \cite{Chinea} and \cite{Chinea3} for the harmonicity of conformal holomorphic submersions.

In this paper, we study conformal anti-invariant submersions as a generalization of anti-invariant Riemannian submersions and investigate the geometry of the total space and the base space for the existence of such submersions. The paper is organized as follows. In the second section, we gather main notions and formulas for other sections. In section 3, we introduce conformal anti-invariant submersions from almost Hermitian manifolds onto Riemannian manifolds, give examples and investigates the geometry of leaves of the horizontal distribution and the vertical distribution. In section 4, we find necessary and sufficient conditions for a conformal anti-invariant submersion to be harmonic and totally geodesic, respectively. In section 5, we show that there are certain product structures on the total space of a conformal anti-invariant submersion. In section 6, we study curvature relations between the total space and the base space, find several inequalities and obtain new results when the inequality becomes the equality.
\section{\textbf{Preliminaries}}

In this section, we define almost Hermitian manifolds, recall the notion of
({\it horizontally}) conformal submersions between Riemannian manifolds and
give a brief review of basic facts of ({\it horizontally}) conformal submersions.

Let $(M,g)$ be an almost Hermitian manifold. This means \cite{YK} that $M$
admits a tensor field $J$ of type $(1,1)$ on $M$ such that, $\forall
X,Y\in\Gamma(TM),$ we have%
\begin{equation}
J^{2}=-I,\mbox{ }g(X,Y)=g(JX,JY). \label{e.q:2.1}%
\end{equation}

An almost Hermitian manifold $M$ is called K\"{a}hler manifold if%
\begin{equation}
(\nabla_{X}J)Y=0,\mbox{ }\forall X,Y\in\Gamma(TM), \label{e.q:2.2}%
\end{equation}
where $\nabla$ is the Levi-Civita connection on $M$.

Conformal submersions belong to a wide class of conformal maps that we are going to recall their definition, but we will not study such maps in this paper.

\begin{definition}
\textrm{(\cite{BW})}Let $\varphi:(M^{m},g)\rightarrow(N^{n},h)$ be a smooth
map between Riemannian manifolds, and let $x\in M$. Then $\varphi$ is called
horizontally weakly conformal or semiconformal at $x$ if either

(i) $d\varphi_{x}=0,$ or

(ii) $d\varphi_{x}$ maps the horizontal space $\mathcal{H}_{x}=\{\ker
(d\varphi_{x})\}^{\perp}$ conformally onto $T_{\varphi(x)}N,$ i.e.,
$d\varphi_{x}$ is surjective and there exists a number $\Lambda(x)\neq0$ such
that%
\begin{equation}
h(d\varphi_{x}(X),d\varphi_{x}(Y))=\Lambda(x)g(X,Y)\mbox{ }(X,Y\in
\mathcal{H}_{x}). \label{e.q:2.3}%
\end{equation}

\end{definition}

Note that we can write the last equation more succinctly as%
\[
(\varphi^{\ast}h)_{x}\mid_{\mathcal{H}_{x}\times\mathcal{H}_{x}}%
=\Lambda(x)g_{x}\mid_{\mathcal{H}_{x}\times\mathcal{H}_{x}}.
\]
With the above definition of critical point, a point $x$ is of type (i) in
Definition 2.1 if and only if it is a critical point of $\varphi;$ we shall call
a point of type (ii) a \textit{regular point. }At a critical point,
$d\varphi_{x}$ has rank $0;$ at a regular point, $d\varphi_{x}$ has rank $n$
and $\varphi$ is a submersion. The number $\Lambda(x)$ is called the
\textit{square dilation }(of $\varphi$ at $x$); it is necessarily
non-negative; its square root $\lambda(x)=\sqrt{\Lambda(x)}$ is called the
dilation (of $\varphi$ at $x$). The map $\varphi$ is called\textit{
horizontally weakly conformal} or \textit{semiconformal} (on $M$) if it is
horizontally weakly conformal at every point of $M$. It is clear that  if $\varphi$
has no critical points, then we call it a (\textit{horizontally}) conformal submersion.

Next, we recall the following definition from \cite{G}.
Let $\pi:M\rightarrow N$ be a submersion. A vector field $E$ on $M$ is said to
be projectable if there exists a vector field $\breve{E}$ on $N,$ such that
$d\pi(E_{x})=\breve{E}_{\pi(x)}$ for all $x\in M$. In this case $E$ and
$\breve{E}$ are called $\pi-$related. A horizontal vector field $Y$ on $(M,g)$
is called basic, if it is projectable. It is a well known fact that if $\breve{Z}$ is a vector field on $N,$ then there exists a unique basic vector
field $Z$ on $M$, such that $Z$ and $\breve{Z}$ are $\pi-$related. The vector
field $Z$ is called the horizontal lift of $\breve{Z}$.

The fundamental tensors of a submersion were introduced in \cite{O}. They play
a similar role to that of the second fundamental form of an immersion. More precisely,
O'Neill's tensors $T$ and $A$ defined for vector fields $E,F$ on $M$ by%
\begin{equation}
A_{E}F=\mathcal{V}\nabla_{\mathcal{H}E}\mathcal{H}F+\mathcal{H}\nabla
_{\mathcal{H}E}\mathcal{V}F \label{e.q:2.4}%
\end{equation}%
\begin{equation}
T_{E}F=\mathcal{H}\nabla_{\mathcal{V}E}\mathcal{V}F+\mathcal{V}\nabla
_{\mathcal{V}E}\mathcal{H}F \label{e.q:2.5}%
\end{equation}
where $\mathcal{V}$ and $\mathcal{H}$ are the vertical and horizontal
projections (see \cite{FIP}). On the other hand, from (\ref{e.q:2.4}) and
(\ref{e.q:2.5}), we have%
\begin{equation}
\nabla_{V}W=T_{V}W+\hat{\nabla}_{V}W \label{e.q:2.6}%
\end{equation}%
\begin{equation}
\nabla_{V}X=\mathcal{H}\nabla_{V}X+T_{V}X \label{e.q:2.7}%
\end{equation}%
\begin{equation}
\nabla_{X}V=A_{X}V+\mathcal{V}\nabla_{X}V \label{e.q:2.8}%
\end{equation}%
\begin{equation}
\nabla_{X}Y=\mathcal{H}\nabla_{X}Y+A_{X}Y \label{e.q:2.9}%
\end{equation}
for $X,Y\in\Gamma((\ker\pi_{\ast})^{\perp})$ and $V,W\in\Gamma(\ker\pi_{\ast
}),$ where $\hat{\nabla}_{V}W=\mathcal{V}\nabla_{V}W$. If $X$ is basic, then
$\mathcal{H}\nabla_{V}X=A_{X}V$.

It is easily seen that for $x\in M,$ $X\in\mathcal{H}_{x}$ and $V \in \mathcal{V}%
_{x}$ the linear operators $T_{V},A_{X}:T_{x}M\rightarrow T_{x}M$ are
skew-symmetric, that is%
\[
-g(T_{V}E,F)=g(E,T_{V}F)\text{ and }-g(A_{X}E,F)=g(E,A_{X}F)
\]
for all $E,F\in T_{x}M$. We also see that the restriction of $T$ to the
vertical distribution $T\mid_{\mathcal{V\times V}}$ is exactly the second
fundamental form of the fibres of $\pi$. Since $T_{V}$ is skew-symmetric we get:
$\pi$ has totally geodesic fibres if and only if $T\equiv0$. For the special
case when $\pi$ is horizontally conformal we have the following:

\begin{proposition}
\textrm{(\cite{G})} Let $\pi:(M^{m},g)\rightarrow(N^{n},h)$ be a horizontally
conformal submersion with dilation $\lambda$ and $X,Y$ be horizontal vectors,
then%
\begin{equation}
A_{X}Y=\frac{1}{2}\{\mathcal{V}\left[  X,Y\right]  -\lambda^{2}%
g(X,Y)\operatorname{grad}_{\mathcal{V}}(\frac{1}{\lambda^{2}})\}.
\label{e.q:2.10}%
\end{equation}

\end{proposition}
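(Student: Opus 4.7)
The plan is to split $A_XY$ into its symmetric and anti-symmetric parts in $X,Y$. By tensoriality of $A$, it suffices to check the formula for basic horizontal vector fields $X,Y$, so I would fix such a pair once and for all. The anti-symmetric piece is easy: since $X$ is horizontal, (\ref{e.q:2.4}) reduces to $A_XY=\mathcal{V}\nabla_XY$, and torsion-freeness of $\nabla$ gives
\begin{equation*}
A_XY-A_YX=\mathcal{V}\nabla_XY-\mathcal{V}\nabla_YX=\mathcal{V}[X,Y].
\end{equation*}

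The heart of the proof is the symmetric piece $A_XY+A_YX$, where the conformality enters. Because $X,Y$ are basic, the function $\lambda^2g(X,Y)=h(d\pi X,d\pi Y)\circ\pi$ is the pull-back of a function on $N$, hence annihilated by every vertical vector field $V$. Expanding $V(\lambda^2g(X,Y))=0$ by Leibniz yields
\begin{equation*}
V(\lambda^2)\,g(X,Y)+\lambda^2\bigl(g(\nabla_VX,Y)+g(X,\nabla_VY)\bigr)=0.
\end{equation*}
Since $X$ is basic, the paragraph after (\ref{e.q:2.9}) gives $\mathcal{H}\nabla_VX=A_XV$, and as $Y$ is horizontal I may replace $\nabla_VX$ by $A_XV$ in the inner product; the skew-symmetry $g(A_XV,Y)=-g(V,A_XY)$ quoted just before the proposition then converts both middle terms into inner products with $A_XY+A_YX$, giving
\begin{equation*}
g(V,A_XY+A_YX)=\frac{V(\lambda^2)}{\lambda^2}g(X,Y).
\end{equation*}

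Rewriting $V(\lambda^2)/\lambda^2=-\lambda^2V(1/\lambda^2)=-\lambda^2g(V,\operatorname{grad}(1/\lambda^2))$ and noting that $V$ is vertical so only the vertical part of the gradient contributes, I conclude
\begin{equation*}
g(V,A_XY+A_YX)=-\lambda^2g(X,Y)\,g(V,\operatorname{grad}_{\mathcal{V}}(1/\lambda^2))
\end{equation*}
for every vertical $V$. Since $A_XY+A_YX$ is itself vertical (because $A_X$ sends horizontal vectors to vertical ones), this identity of inner products upgrades to equality of vectors, and averaging with the anti-symmetric piece produces (\ref{e.q:2.10}).

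The main obstacle is not conceptual but bookkeeping: I must be careful about signs when passing between $\lambda^2$ and $1/\lambda^2$, and I must justify using the skew-symmetry of $A_X$ with one horizontal and one vertical argument — this is fine because $A_EF$ depends on $E$ only through $\mathcal{H}E$, so the stated skew-symmetry applies with $X$ horizontal and $V,Y$ arbitrary. A minor subsidiary point is the reduction to basic fields: both sides of (\ref{e.q:2.10}) are $C^\infty(M)$-bilinear in $X,Y$, so checking on basic fields (which span the horizontal distribution at each point) suffices.
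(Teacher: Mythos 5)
Your argument is correct and complete: the antisymmetric part $A_XY-A_YX=\mathcal{V}[X,Y]$ follows from torsion-freeness, the symmetric part is extracted by differentiating the fibre-constant function $\lambda^2 g(X,Y)=h(d\pi X,d\pi Y)$ along a vertical field and using $\mathcal{H}\nabla_VX=A_XV$ together with the skew-symmetry of $A_X$, and the tensoriality remarks legitimately reduce everything to basic fields. The paper itself gives no proof (it cites the result from Gudmundsson's thesis), and your derivation is precisely the standard one found there, so there is nothing to add.
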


We see that the skew-symmetric part of $A\mid_{\mathcal{H\times H}}$ measures
the obstruction integrability of the horizontal distribution $\mathcal{H}$.

We now recall the following curvature relations for a conformal submersion from \cite{Gromoll-Klingenberg-Meyer} and \cite{G}.

\begin{theorem}
Let $m>n\geq2$ and $(M^{m},g,\nabla,R)$, $(N^{n},h,\nabla^{N},R^{N})$ be two
Riemannian manifolds with their Levi-Civita connections and the corresponding
curvature tensors. Let $\pi:(M,g)\rightarrow(N,h)$ be a horizontally conformal
submersion, with dilation $\lambda:M\rightarrow%
\mathbb{R}
^{+}$ and let $R^{\mathcal{V}}$ be the curvature tensor of the fibres of
$\pi$. If $X,Y,Z,H$ are horizontal and $U,V,W,F$ vertical vectors, then%
\begin{equation}
g(R(U,V)W,F)=g(R^{\mathcal{V}}(U,V)W,F)+g(T_{U}W,T_{V}F)-g(T_{V}W,T_{U}F),
\label{e.q:2.11}%
\end{equation}%
\begin{equation}
g(R(U,V)W,X)=g((\nabla_{U}T)_{V}W,X)-g((\nabla_{V}T)_{U}W,X), \label{e.q:2.12}%
\end{equation}%
\begin{align}
g(R(U,X)Y,V)  &  =g((\nabla_{U}A)_{X}Y,V)+g(A_{X}U,A_{Y}V)\label{e.q:2.13}\\
&  -g((\nabla_{X}T)_{U}Y,V)-g(T_{V}Y,T_{U}X)\nonumber\\
&  +\lambda^{2}g(A_{X}Y,U)g(V,\operatorname{grad}_{\mathcal{V}}(\frac
{1}{\lambda^{2}})),\nonumber
\end{align}%
\begin{align}
g(R(X,Y)Z,H)  &  =\frac{1}{\lambda^{2}}h(R^{N}(\breve{X},\breve{Y})\breve
{Z},\breve{H})+\frac{1}{4}[g(\mathcal{V}[X,Z],\mathcal{V}%
[Y,H])\label{e.q:2.14}\\
&  -g(\mathcal{V}[Y,Z],\mathcal{V}[X,H])+2g(\mathcal{V}[X,Y],\mathcal{V}%
[Z,H])]\nonumber\\
&  +\frac{\lambda^{2}}{2}[g(X,Z)g(\nabla_{Y}\operatorname{grad}(\frac
{1}{\lambda^{2}}),H)-g(Y,Z)g(\nabla_{X}\operatorname{grad}(\frac{1}%
{\lambda^{2}}),H)\nonumber\\
&  +g(Y,H)g(\nabla_{X}\operatorname{grad}(\frac{1}{\lambda^{2}}%
),Z)-g(X,H)g(\nabla_{Y}\operatorname{grad}(\frac{1}{\lambda^{2}}%
),Z)]\nonumber\\
&  +\frac{\lambda^{4}}{4}[(g(X,H)g(Y,Z)-g(Y,H)g(X,Z))\parallel\operatorname{grad}%
(\frac{1}{\lambda^{2}})\parallel^{2}\nonumber\\
&  +g(X(\frac{1}{\lambda^{2}})Y-Y(\frac{1}{\lambda^{2}})X,H(\frac{1}%
{\lambda^{2}})Z-Z(\frac{1}{\lambda^{2}})H)].\nonumber
\end{align}

\end{theorem}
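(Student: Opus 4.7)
The plan is to derive all four curvature identities by inserting the O'Neill-type decomposition formulas (\ref{e.q:2.6})--(\ref{e.q:2.9}) into the defining expression $R(E,F)G=\nabla_E\nabla_FG-\nabla_F\nabla_EG-\nabla_{[E,F]}G$ and then simplifying with Proposition 2.2 and the horizontal conformality condition (\ref{e.q:2.3}). I will use the standard tensor-derivative convention $(\nabla_ET)_FG=\nabla_E(T_FG)-T_{\nabla_EF}G-T_F\nabla_EG$, and the analogous convention for $\nabla A$, so that the derivative terms in (\ref{e.q:2.12})--(\ref{e.q:2.13}) appear naturally after grouping. Since each identity is tensorial, I may further assume that $X,Y,Z,H$ are basic horizontal lifts of fields $\breve{X},\breve{Y},\breve{Z},\breve{H}$ on $N$; this has the technical advantage that $[X,V]$ is vertical whenever $V$ is vertical, and that $\mathcal{H}\nabla_VX=A_XV$, which kills many would-be cross terms.

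For (\ref{e.q:2.11}), the restriction $T|_{\mathcal{V}\times\mathcal{V}}$ coincides with the second fundamental form of the fibres and $\hat{\nabla}$ is the induced connection, so substituting (\ref{e.q:2.6}) into $R(U,V)W$ and pairing with the vertical vector $F$ is exactly the classical Gauss equation for the fibres; no $\lambda$-corrections arise because all arguments are vertical. For (\ref{e.q:2.12}), the same expansion projected onto a horizontal direction gives the Codazzi-type right-hand side once the derivative terms are collected under the convention above. For (\ref{e.q:2.13}) I would expand $\nabla_U\nabla_XY$ and $\nabla_X\nabla_UY$ using the mixed formulas (\ref{e.q:2.7})--(\ref{e.q:2.8}), pair with the vertical vector $V$, and substitute (\ref{e.q:2.10}) for $A_XY$; this last substitution is precisely what produces the extra $\lambda^2$ term on the final line of (\ref{e.q:2.13}).

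The main obstacle is (\ref{e.q:2.14}). Here I need an intermediate identity relating $\mathcal{H}\nabla_XY$ for basic horizontal fields to the horizontal lift of $\nabla^N_{\breve{X}}\breve{Y}$; in the genuinely conformal (as opposed to Riemannian) setting these do not agree, and the discrepancy carries correction terms built from $\operatorname{grad}(1/\lambda^2)$. I would derive this by polarizing the identity $\lambda^2g(X,Y)=h(\breve{X},\breve{Y})\circ\pi$, differentiating along a third basic field, and comparing the Koszul formulas on $M$ and $N$. Inserting the resulting expression for $\mathcal{H}\nabla_XY$, together with (\ref{e.q:2.10}) for the $A$-contributions, into $g(R(X,Y)Z,H)$ reproduces the four blocks on the right of (\ref{e.q:2.14}): the leading $\tfrac{1}{\lambda^2}h(R^N(\breve{X},\breve{Y})\breve{Z},\breve{H})$ from the iterated horizontal derivatives, the $\mathcal{V}[\cdot,\cdot]$ quadratic block from the $A$-contributions, the $\lambda^2$ block from cross terms between the conformal corrections and the Levi-Civita derivative, and the $\lambda^4$ block from pairing two corrections against one another. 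The bookkeeping of $\lambda$-dependent terms while preserving the algebraic symmetries of $R$ throughout is the real technical burden.
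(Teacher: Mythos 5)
The paper does not actually prove this theorem: it is recalled verbatim from \cite{Gromoll-Klingenberg-Meyer} and from Gudmundsson's thesis \cite{G}, so there is no in-house argument to compare yours against. Your outline is, in essence, the derivation given in those sources, and it is sound. Identities (\ref{e.q:2.11}) and (\ref{e.q:2.12}) are literally the Gauss and Codazzi equations of the fibres, which are ordinary Riemannian submanifolds of $(M,g)$; they are untouched by the conformal factor, exactly as you say. For (\ref{e.q:2.13}) I would state the source of the new term a little more precisely than ``substituting (\ref{e.q:2.10})'': O'Neill's computation uses the skew-symmetry $A_XY=-A_YX$ on $\mathcal{H}\times\mathcal{H}$, which fails here, and by (\ref{e.q:2.10}) the surviving symmetric part $-\tfrac{\lambda^{2}}{2}g(X,Y)\operatorname{grad}_{\mathcal{V}}(\tfrac{1}{\lambda^{2}})$ is what produces the last line of (\ref{e.q:2.13}); this is your claim, just located exactly. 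For (\ref{e.q:2.14}) you have correctly isolated the one genuinely new ingredient, the comparison of $\mathcal{H}\nabla_XY$ with the horizontal lift of $\nabla^{N}_{\breve X}\breve Y$ for basic $X,Y$; the Koszul polarization you sketch gives
\[
\mathcal{H}\nabla_{X}Y=\widetilde{\nabla^{N}_{\breve X}\breve Y}+\frac{\lambda^{2}}{2}\Bigl(X(\tfrac{1}{\lambda^{2}})Y+Y(\tfrac{1}{\lambda^{2}})X-g(X,Y)\operatorname{grad}_{\mathcal{H}}(\tfrac{1}{\lambda^{2}})\Bigr),
\]
where the tilde denotes horizontal lift, and feeding this together with (\ref{e.q:2.10}) into $g(R(X,Y)Z,H)$ does yield the four blocks you describe. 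The only reservation is that what you have written is a plan rather than a proof: the $\lambda^{2}$ and $\lambda^{4}$ blocks of (\ref{e.q:2.14}) emerge only after a long cancellation that you have not exhibited. There is, however, no conceptual gap, and the strategy is the correct one.
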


We also recall the notion of harmonic maps between Riemannian manifolds. Let
$(M,g_{M})$ and $(N,g_{N})$ be Riemannian manifolds and suppose that
$\varphi:M\rightarrow N$ is a smooth map between them. Then the differential
of $\varphi_{\ast}$ of $\varphi$ can be viewed a section of the bundle
$Hom(TM,\varphi^{-1}TN)\rightarrow M,$ where $\varphi^{-1}TN$ is the pullback
bundle which has fibres $(\varphi^{-1}TN)_{p}=T_{\varphi(p)}N,$ $p\in M$.
$Hom(TM,\varphi^{-1}TN)$ has a connection $\nabla$ induced from the
Levi-Civita connection $\nabla^{M}$ and the pullback connection. Then the
second fundamental form of $\varphi$ is given by%
\begin{equation}
(\nabla\varphi_{\ast})(X,Y)=\nabla_{X}^{\varphi}\varphi_{\ast}(Y)-\varphi
_{\ast}(\nabla_{X}^{M}Y) \label{e.q:2.15}%
\end{equation}
for $X,Y\in\Gamma(TM),$ where $\nabla^{\varphi}$ is the pullback connection.
It is known that the second fundamental form is symmetric. A smooth map
$\varphi:(M,g_{M})\rightarrow(N,g_{N})$ is said to be harmonic if
$trace(\nabla\varphi_{\ast})=0$. On the other hand, the tension field of
$\varphi$ is the section $\tau(\varphi)$ of $\Gamma(\varphi^{-1}TN)$ defined
by%
\begin{equation}
\tau(\varphi)=\operatorname{div}\varphi_{\ast}=\sum_{i=1}^{m}(\nabla
\varphi_{\ast})(e_{i},e_{i}), \label{e.q:2.16}%
\end{equation}
where $\{e_{1},...,e_{m}\}$ is the orthonormal frame on $M$. Then it follows
that $\varphi$ is harmonic if and only if $\tau(\varphi)=0,$ for details, see
\cite{BW}.

Finally, we recall the following lemma from \cite{BW}.
\begin{lemma}
\label{lem1}(Second fundamental form of an HC submersion) Suppose that
$\varphi:M\rightarrow N$ is a horizontally conformal submersion. Then, for any
horizontal vector fields $X,Y$ and vertical vector fields $V,W,$ we have

$
\begin{array}
[c]{lll}%
(i) & \nabla d\varphi(X,Y)= & X(\ln\lambda)d\varphi(Y)+Y(\ln\lambda
)d\varphi(X)-g(X,Y)d\varphi(\operatorname{grad}\ln\lambda);\\
(ii) & \nabla d\varphi(V,W)= & -d\varphi(A_{V}^{\mathcal{V}}W);\\
(iii) & \nabla d\varphi(X,V)= & -d\varphi(\nabla_{X}^{M}V)=d\varphi
((A^{\mathcal{H}})_{X}^{\ast}V).
\end{array}
$

Here $(A^{\mathcal{H}})_{X}^{\ast}$ is the adjoint of $A_{X}^{\mathcal{H}}$
characterized by%
\[
\langle(A^{\mathcal{H}})_{X}^{\ast}E,F\rangle=\langle E,A_{X}^{\mathcal{H}%
}F\rangle\mbox{ }(E,F\in\Gamma(TM)).
\]

\end{lemma}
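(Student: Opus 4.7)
The plan is to apply the defining identity (\ref{e.q:2.15}) for the second fundamental form in each of the three regimes (horizontal--horizontal, vertical--vertical, mixed) separately, reducing each computation to the O'Neill decompositions (\ref{e.q:2.6})--(\ref{e.q:2.9}) together with the conformality relation $h(d\varphi X, d\varphi Y)=\lambda^{2} g(X,Y)$ on the horizontal distribution. Since $(\nabla d\varphi)$ is $C^{\infty}(M)$-bilinear and tensorial, it suffices to verify the formulas on basic horizontal and arbitrary vertical vector fields, after which the statement extends by tensoriality. Throughout, the key observation is that $d\varphi$ annihilates vertical vectors, so any vertical argument of $d\varphi$ contributes nothing.

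Parts (ii) and (iii) follow almost immediately. For (ii), $d\varphi(W)=0$ forces the pullback-connection term in (\ref{e.q:2.15}) to vanish, leaving $(\nabla d\varphi)(V,W)=-d\varphi(\nabla^{M}_{V}W)$. Decomposing through (\ref{e.q:2.6}) as $\nabla^{M}_{V}W=\hat\nabla_{V}W+T_{V}W$ and noting that $\hat\nabla_{V}W$ is vertical yields $(\nabla d\varphi)(V,W)=-d\varphi(T_{V}W)$; identifying the symbol $A^{\mathcal{V}}_{V}W$ with the horizontal part $T_{V}W$ of $\nabla^{M}_{V}W$ gives the claim. Likewise for (iii), $d\varphi(V)=0$ gives $(\nabla d\varphi)(X,V)=-d\varphi(\nabla^{M}_{X}V)=-d\varphi(A_{X}V)$ after applying (\ref{e.q:2.8}) and killing the vertical part. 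Skew-symmetry of $A_{X}$ on $T_{x}M$ yields $\langle(A^{\mathcal{H}})^{\ast}_{X}V,F\rangle=\langle V,A_{X}F\rangle=-\langle A_{X}V,F\rangle$ for horizontal $F$, so $(A^{\mathcal{H}})^{\ast}_{X}V=-A_{X}V$ and the second identity in (iii) follows.

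Part (i) is the substantive step. For basic horizontal $X,Y$, set $\tilde X=d\varphi(X)$, $\tilde Y=d\varphi(Y)$ and test against a third basic horizontal $Z$ with projection $\tilde Z$. Because $Y$ is basic, $\nabla^{\varphi}_{X}d\varphi(Y)=(\nabla^{N}_{\tilde X}\tilde Y)\circ\varphi$. Expanding $h(\nabla^{N}_{\tilde X}\tilde Y,\tilde Z)$ by the Koszul formula on $(N,h)$, pulling back via $h(\tilde Y,\tilde Z)\circ\varphi=\lambda^{2}g(Y,Z)$, and using that $\mathcal{H}[X,Y]$ is basic with projection $[\tilde X,\tilde Y]$ so that $d\varphi[X,Y]=[\tilde X,\tilde Y]\circ\varphi$, one obtains an expression in which the Koszul formula on $(M,g)$ for $g(\nabla^{M}_{X}Y,Z)$ appears, plus correction terms arising from $X(\lambda^{2})$, $Y(\lambda^{2})$, and $Z(\lambda^{2})$. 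Dividing through by $\lambda^{2}$ and using $E(\lambda^{2})=2\lambda^{2}E(\ln\lambda)$ together with $Z(\ln\lambda)=g(\operatorname{grad}\ln\lambda,Z)$ rewrites the correction precisely as $g\bigl(X(\ln\lambda)Y+Y(\ln\lambda)X-g(X,Y)\operatorname{grad}\ln\lambda,\,Z\bigr)$. Pushing forward by $d\varphi$ delivers formula (i); tensoriality then extends from basic to arbitrary horizontal $X,Y$.

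The main obstacle will be the bookkeeping in (i): one must carefully track which quantities live on $M$ versus on $N$, exploit that basic fields admit an unambiguous push-forward and have $\varphi$-related Lie brackets, and absorb the factor $\lambda^{2}$ at the right moment so that the three $\tilde E\,h(\cdot,\cdot)$ terms in the Koszul expansion become logarithmic in $\lambda$. By contrast, (ii) and (iii) carry no genuine analytical difficulty beyond the O'Neill decomposition and the notational identifications linking the O'Neill tensors $T$ and $A$ to the Baird--Wood symbols $A^{\mathcal{V}}$ and $(A^{\mathcal{H}})^{\ast}$.
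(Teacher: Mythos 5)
The paper does not prove this lemma at all: it is quoted verbatim from Baird--Wood \cite{BW} as background material, so there is no in-paper argument to compare against. Your proposal is a correct reconstruction of the standard proof. Parts (ii) and (iii) are exactly right: with $d\varphi$ vanishing on vertical vectors, (\ref{e.q:2.15}) collapses to $-d\varphi(\nabla^{M}_{V}W)=-d\varphi(T_{V}W)$ and $-d\varphi(\nabla^{M}_{X}V)=-d\varphi(A_{X}V)$ via (\ref{e.q:2.6}) and (\ref{e.q:2.8}), and your use of the skew-symmetry of $A_{X}$ to identify $-A_{X}V$ with the horizontal part of $(A^{\mathcal{H}})^{\ast}_{X}V$ is the correct dictionary between the O'Neill and Baird--Wood notations. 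Part (i) as you outline it also goes through: testing $\nabla^{N}_{\tilde X}\tilde Y-d\varphi(\nabla^{M}_{X}Y)$ against $d\varphi(Z)$ for basic $Z$, the two Koszul formulas cancel up to the three derivative terms $X(\lambda^{2})g(Y,Z)+Y(\lambda^{2})g(X,Z)-Z(\lambda^{2})g(X,Y)$, which after dividing by $2\lambda^{2}$ and using surjectivity of $d\varphi$ on the horizontal space give exactly the stated formula; the reduction to basic fields is justified by the tensoriality of $\nabla d\varphi$ and of the right-hand side. The only point worth making explicit in a written-out version is that $[\tilde X,\tilde Y]\circ\varphi=d\varphi(\mathcal{H}[X,Y])$ and that $g(X,Y)\,d\varphi(\operatorname{grad}\ln\lambda)$ only sees the horizontal part of the gradient, both of which you implicitly use.
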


\section{\textbf{Conformal Anti-invariant Submersions}\label{sect-prel}}

In this section, we define conformal anti-invariant submersions from an almost
Hermitian manifold onto a Riemannian manifold and investigate the effect of the existence of conformal anti-invariant submersions on the source manifold and the target manifold. But we first present the following notion.

\begin{definition}
\label{def3.1}Let $M$ be a complex $m$-dimensional almost Hermitian manifold
with Hermitian metric $g$ and almost complex structure $J$ and $N$ be a
Riemannian manifold with Riemannian metric $g^{\prime}$. A  horizontally
conformal submersion  $F:(M^{m},g)\rightarrow(N^{n},g^{\prime})$ with dilation $\lambda$ is a called conformal anti-invariant submersion if the distribution $\ker F_{\ast}$ is anti-invariant with respect to $J,$ i.e., $J(\ker
F_{\ast})\subseteq(\ker F_{\ast})^{\perp}$.
\end{definition}

Let $F:(M,g,J)\rightarrow(N,g^{\prime})$ be a conformal anti-invariant
submersion from an almost Hermitian manifold $(M,g,J)$ to a Riemannian
manifold $(N,g^{\prime})$. First of all, from Definition 3.1, we have $J(\ker
F_{\ast})^{\perp}\cap\ker F_{\ast}\neq\{0\}$. We denote the complementary
orthogonal distribution to $J(\ker F_{\ast})$ in $(\ker F_{\ast})^{\perp}$ by
$\mu$. Then we have%
\begin{equation}
(\ker F_{\ast})^{\perp}=J(\ker F_{\ast})\oplus\mu. \label{e.q:3.1}%
\end{equation}
It is easy to see that $\mu$ is an invariant distribution of $(\ker F_{\ast
})^{\perp},$ under the endomorphism $J$. Thus, for $X\in\Gamma((\ker F_{\ast
})^{\perp}),$ we have%
\begin{equation}
JX=BX+CX, \label{e.q:3.2}%
\end{equation}
where $BX\in\Gamma(\ker F_{\ast})$ and $CX\in\Gamma(\mu)$. On the other hand,
since $F_{\ast}((\ker F_{\ast})^{\perp})=TN$ and $F$ is a conformal submersion,
using (\ref{e.q:3.2}) we derive $\frac{1}{\lambda^{2}}g^{\prime}(F_{\ast
}JV,F_{\ast}CX)=0,$ for every $X\in\Gamma((\ker F_{\ast})^{\perp})$ and
$V\in\Gamma(\ker F_{\ast}),$ which implies that%
\begin{equation}
TN=F_{\ast}(J(\ker F_{\ast}))\oplus F_{\ast}(\mu). \label{e.q:3.3}%
\end{equation}
\begin{example} Every anti-invariant Riemannian submersion is a conformal anti-invariant submersion with $\lambda=I$, where $I$ is the identity function.
\end{example}
We say that a conformal anti-invariant submersion is proper if $\lambda\neq I$.
We now present an example of a proper conformal anti-invariant submersion. In the following $R^{2m}$ denotes the Euclidean $2m$-space with the standard
metric. An almost complex structure $J$ on $R^{2m}$ is said to be compatible
if $(R^{2m},J)$ is complex analytically isometric to the complex number space
$C^{m}$ with the standard flat K\"{a}hlerian metric. We denote by $J$ the
compatible almost complex structure on $R^{2m}$ defined by%
\[
J(a^{1},...,a^{2m})=(-a^{2},a^{1},...,-a^{2m},a^{2m-1}).
\]

\begin{example}
\label{exm1}Let $F$ be a map defined by
$$
\begin{array}{cccc}
  F: & R^4             & \longrightarrow & R^2\\
     & (x_1,x_2,x_3,x_4) &             & (e^{x_{3}}\sin x_{4},e^{x_{3}}\cos x_{4}).
\end{array}
$$
 Then
 $F$ is a conformal anti-invariant
submersion with $\lambda=e^{x_{3}}$.
\end{example}

\begin{lemma}
Let $F$ be a conformal anti-invariant submersion from a K\"{a}hler manifold
$(M,g,J)$ to a Riemannian manifold $(N,g^{\prime})$. Then we have%
\begin{equation}
g(CY,JV)=0 \label{e.q:3.4}%
\end{equation}
and%
\begin{equation}
g(\nabla_{X}CY,JV)=-g(CY,JA_{X}V) \label{e.q:3.5}%
\end{equation}
for $X,Y\in\Gamma((\ker F_{\ast})^{\perp})$ and $V\in\Gamma(\ker F_{\ast})$.
\end{lemma}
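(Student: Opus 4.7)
The plan is to prove (i) directly from the orthogonal decomposition set up just before the lemma, and then obtain (ii) by differentiating (i) and exploiting the Kähler condition plus the structure of $\nabla_X V$ given by O'Neill's tensor $A$.

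For part (i), the key observation is that by construction $CY \in \Gamma(\mu)$, where $\mu$ is defined as the orthogonal complement of $J(\ker F_*)$ inside $(\ker F_*)^\perp$ (equation \ref{e.q:3.1}). Since $V \in \Gamma(\ker F_*)$, the vector $JV$ lies in $J(\ker F_*)$, and hence $g(CY,JV) = 0$ by the very definition of $\mu$. This part is essentially a direct unpacking of definitions and will not require any further computation.

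For part (ii), I would start from (i) and differentiate both sides covariantly along $X \in \Gamma((\ker F_*)^\perp)$. Compatibility of the Levi-Civita connection with $g$ gives
\[
0 = X\bigl(g(CY,JV)\bigr) = g(\nabla_X CY, JV) + g(CY, \nabla_X JV).
\]
Next I would use the Kähler assumption (\ref{e.q:2.2}), which says $\nabla J = 0$, to move $J$ outside the connection: $\nabla_X JV = J\nabla_X V$. Then I would decompose $\nabla_X V$ using (\ref{e.q:2.8}),
\[
\nabla_X V = A_X V + \mathcal{V}\nabla_X V,
\]
and apply $J$ to obtain $J\nabla_X V = JA_X V + J\mathcal{V}\nabla_X V$. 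The term $J\mathcal{V}\nabla_X V$ lies in $J(\ker F_*)$, so its inner product with $CY \in \Gamma(\mu)$ vanishes, and we are left with $g(CY, J\nabla_X V) = g(CY, JA_X V)$. Substituting back yields the claimed identity.

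There is no serious obstacle here: both parts rest on the orthogonality $\mu \perp J(\ker F_*)$ together with the Kähler identity $\nabla J = 0$. The only point requiring a bit of care is the correct application of the O'Neill decomposition (\ref{e.q:2.8}) to split $\nabla_X V$ into its horizontal ($A_X V$) and vertical ($\mathcal{V}\nabla_X V$) parts, so that the vertical part can be discarded against $CY \in \Gamma(\mu)$. No use of the anti-invariance assumption beyond the decomposition (\ref{e.q:3.1})--(\ref{e.q:3.2}) is needed, and the conformality of $F$ does not enter this particular computation.
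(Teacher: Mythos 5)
Your proof is correct and follows essentially the same route as the paper: identity (\ref{e.q:3.5}) is obtained exactly as in the text by differentiating (\ref{e.q:3.4}), using $\nabla J=0$, splitting $\nabla_X V$ via (\ref{e.q:2.8}), and discarding $g(CY,J\mathcal{V}\nabla_X V)$ because $J\mathcal{V}\nabla_X V\in\Gamma(J\ker F_*)\perp\mu$. The only (immaterial) difference is in (\ref{e.q:3.4}), which you read off directly from the orthogonality $\mu\perp J(\ker F_*)$ in (\ref{e.q:3.1}), whereas the paper derives it from the Hermitian compatibility $g(JX,JY)=g(X,Y)$ in (\ref{e.q:2.1}); both are valid.
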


\begin{proof}
For $Y\in\Gamma((\ker F_{\ast})^{\perp})$ and $V\in\Gamma(\ker F_{\ast})$, since $BY\in\Gamma(\ker F_{\ast})$ and $JV\in\Gamma((\ker F_{\ast})^{\perp
})$, using (\ref{e.q:2.1}), we get  (\ref{e.q:3.4}). Now, using
(\ref{e.q:3.4}), (\ref{e.q:2.2}) and (\ref{e.q:2.8}) we obtain
$$
g(\nabla_{X}CY,JV)=-g(CY,JA_{X}V)-g(CY,J\mathcal{V}\nabla_{X}V).
$$
Since $J\mathcal{V}\nabla_{X}V\in\Gamma(J\ker F_{\ast}),$ we obtain
(\ref{e.q:3.5}).
\end{proof}

We now study the integrability of the distribution $(\ker F_{\ast})^{\perp}$
and then we investigate the geometry of leaves of $\ker F_{\ast}$ and ($\ker
F_{\ast})^{\perp}$. We note that it is known that the distribution $\ker
F_{\ast}$ is integrable.

\begin{theorem}
\label{teo1}Let $F$ be a conformal anti-invariant submersion from a K\"{a}hler
manifold $(M,g,J)$ to a Riemannian manifold $(N,g^{\prime})$. Then the
following assertions are equivalent to each other;

$%
\begin{array}
[c]{ll}%
a) & (\ker F_{\ast})^{\perp}\text{ is integrable,}%
\end{array}
$

$%
\begin{array}
[c]{lll}%
b) & \frac{1}{\lambda^{2}}g^{\prime}(\nabla_{Y}^{F}F_{\ast}CX-\nabla_{X}%
^{F}F_{\ast}CY,F_{\ast}JV)= & g(A_{X}BY-A_{Y}BX,JV)\\
& &-g(\mathcal{H}%
\operatorname{grad}\ln\lambda,CY)g(X,JV)\\
&  & +g(\mathcal{H}\operatorname{grad}\ln\lambda
,CX)g(Y,JV)\\
& &-2g(CX,Y)g(\mathcal{H}\operatorname{grad}\ln\lambda,JV)
\end{array}
$

for $X,Y\in\Gamma((\ker F_{\ast})^{\perp})$ and $V\in\Gamma(\ker F_{\ast})$.
\end{theorem}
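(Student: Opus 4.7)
My strategy is to convert integrability of $(\ker F_{\ast})^{\perp}$ into the displayed formula in three moves: first, rephrase integrability as the vanishing of $g([X,Y],V)$ against every vertical $V$; second, commute $J$ through the bracket using the Hermitian and K\"ahler conditions and split $JX=BX+CX$, $JY=BY+CY$; third, translate the horizontal terms to the pullback connection via Lemma \ref{lem1} and the conformal relation $\lambda^{2}g=F^{\ast}g^{\prime}$ on horizontal vectors.

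Since $\ker F_{\ast}$ is known to be integrable, $(\ker F_{\ast})^{\perp}$ is integrable if and only if $g([X,Y],V)=0$ for every $X,Y\in\Gamma((\ker F_{\ast})^{\perp})$ and $V\in\Gamma(\ker F_{\ast})$. By (\ref{e.q:2.1}) and (\ref{e.q:2.2}), I would rewrite
\[
g([X,Y],V)=g(J[X,Y],JV)=g(\nabla_{X}JY-\nabla_{Y}JX,JV).
\]
Inserting the decomposition (\ref{e.q:3.2}) splits the right-hand side into a ``$B$-part'' and a ``$C$-part''. For the $B$-part, since $BY\in\Gamma(\ker F_{\ast})$ while $X\in\Gamma((\ker F_{\ast})^{\perp})$, equation (\ref{e.q:2.8}) gives $\nabla_{X}BY=A_{X}BY+\mathcal{V}\nabla_{X}BY$, and pairing with the horizontal vector $JV$ annihilates the vertical piece and leaves $g(A_{X}BY,JV)$. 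Subtracting the symmetric term yields the contribution $g(A_{X}BY-A_{Y}BX,JV)$.

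For the $C$-part, $CY\in\Gamma(\mu)\subset\Gamma((\ker F_{\ast})^{\perp})$ is horizontal, so I plan to apply Lemma \ref{lem1}(i) to $(X,CY)$ together with $(\nabla F_{\ast})(X,CY)=\nabla_{X}^{F}F_{\ast}CY-F_{\ast}(\nabla_{X}CY)$ to express $F_{\ast}(\nabla_{X}CY)$ as $\nabla_{X}^{F}F_{\ast}CY$ plus conformal correction terms. The conformal relation then converts $g(\mathcal{H}\nabla_{X}CY,JV)$ into $\frac{1}{\lambda^{2}}g^{\prime}(F_{\ast}(\nabla_{X}CY),F_{\ast}JV)$, and using (\ref{e.q:3.4}) to kill the $X(\ln\lambda)\,g(CY,JV)$ contribution produces
\[
g(\nabla_{X}CY,JV)=\frac{1}{\lambda^{2}}g^{\prime}(\nabla_{X}^{F}F_{\ast}CY,F_{\ast}JV)-g(\mathcal{H}\operatorname{grad}\ln\lambda,CY)g(X,JV)+g(X,CY)g(\mathcal{H}\operatorname{grad}\ln\lambda,JV).
\]
Subtracting the $X\leftrightarrow Y$ version, the coefficient $g(X,CY)-g(Y,CX)$ simplifies to $-2g(CX,Y)$ by the Hermitian skew-symmetry $g(X,JY)=-g(JX,Y)$ together with $g(X,BY)=g(Y,BX)=0$. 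Collecting the $B$- and $C$-contributions, setting $g([X,Y],V)=0$, and rearranging signs on the pullback-connection term will deliver the claimed equivalence.

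The main obstacle I anticipate is the bookkeeping in the $C$-part: tracking the three terms of Lemma \ref{lem1}(i) through the conformal scaling and the orthogonality (\ref{e.q:3.4}), and correctly identifying the coefficient $-2g(CX,Y)$ of $g(\mathcal{H}\operatorname{grad}\ln\lambda,JV)$ via the Hermitian skew-symmetry; the $B$-part and the initial reduction to the vanishing of $g([X,Y],V)$ are essentially formal.
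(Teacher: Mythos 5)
Your proposal is correct and follows essentially the same route as the paper: reduce integrability to the vanishing of $g([X,Y],V)$, split $J$ via (\ref{e.q:3.2}) into the $A_{X}BY-A_{Y}BX$ contribution and the $C$-part, and convert the $C$-part through (\ref{e.q:2.15}) and Lemma \ref{lem1}(i) with (\ref{e.q:3.4}) suppressing the $g(CY,JV)$ terms; your identification of the coefficient $g(X,CY)-g(Y,CX)=-2g(CX,Y)$ is exactly the simplification the paper performs. No gaps.
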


\begin{proof}
For $Y\in\Gamma((\ker F_{\ast})^{\perp})$ and $V\in\Gamma(\ker F_{\ast}),$ we
see from Definition \ref{def3.1}, $JV\in\Gamma((\ker F_{\ast})^{\perp})$ and
$JY\in\Gamma(\ker F_{\ast}\oplus\mu)$. Thus using (\ref{e.q:2.1}) and
(\ref{e.q:2.2}), for $X\in\Gamma((\ker F_{\ast})^{\perp})$ we get%
$$
g(\left[  X,Y\right]  ,V)=g(\nabla_{X}JY,JV)-g(\nabla_{Y}JX,JV).
$$
Then from (\ref{e.q:3.2}) we have%
\begin{align*}
g(\left[  X,Y\right]  ,V)  &  =g(\nabla_{X}BY,JV)+g(\nabla_{X}CY,JV)\\
&  -g(\nabla_{Y}BX,JV)-g(\nabla_{Y}CX,JV).
\end{align*}
Since $F$ is a conformal submersion, using (\ref{e.q:2.8}) and (\ref{e.q:2.9})
we arrive at%
$$
g(\left[  X,Y\right]  ,V)=g(A_{X}BY-A_{Y}BX,JV)+\frac{1}{\lambda^{2}}g^{\prime}(F_{\ast}\nabla
_{X}CY,F_{\ast}JV)-\frac{1}{\lambda^{2}}g^{\prime}(F_{\ast}\nabla
_{Y}CX,F_{\ast}JV).
$$
Thus, from (\ref{e.q:2.15}) and Lemma \ref{lem1} (i) we derive%
\begin{align*}
g(\left[  X,Y\right]  ,V)  &   =g(A_{X}BY-A_{Y}BX,JV)-g(\mathcal{H}%
\operatorname{grad}\ln\lambda,X)g(CY,JV)\\
&  -g(\mathcal{H}\operatorname{grad}\ln\lambda
,CY)g(X,JV)+g(X,CY)g(\mathcal{H}\operatorname{grad}\ln\lambda,JV)\\
&  +\frac{1}{\lambda^2}g^{\prime}(\nabla_{F_{\ast}X}F_{\ast}CY,F_{\ast
}JV)+g(\mathcal{H}\operatorname{grad}\ln\lambda
,Y)g(CX,JV)\\
&  +g(\mathcal{H}\operatorname{grad}\ln\lambda
,CX)g(Y,JV)-g(Y,CX)g(\mathcal{H}\operatorname{grad}\ln\lambda,JV)\\
&  -\frac{1}{\lambda^2}g^{\prime}(\nabla_{F_{\ast}Y}F_{\ast}CX,F_{\ast}JV).
\end{align*}
Moreover, using (\ref{e.q:3.4}), we obtain%
\begin{align*}
g(\left[  X,Y\right]  ,V)  &  =g(A_{X}BY-A_{Y}BX,JV)-g(\mathcal{H}%
\operatorname{grad}\ln\lambda,CY)g(X,JV)\\
&  +g(\mathcal{H}\operatorname{grad}%
\ln\lambda,CX)g(Y,JV)-2g(CX,Y)g(\mathcal{H}\operatorname{grad}\ln\lambda,JV)\\
&-\frac{1}%
{\lambda^{2}}g^{\prime}(\nabla_{F_{\ast}Y}F_{\ast}CX -\nabla_{F_{\ast}X}%
F_{\ast}CY,F_{\ast}JV),
\end{align*}
which proves $(a)\Leftrightarrow(b)$.
\end{proof}

From Theorem \ref{teo1}, we deduce the following which shows that a conformal anti-invariant submersion with integrable $(\ker F_{\ast})^{\perp}$ turns out to be a horizontally  homothetic submersion.

\begin{theorem}
Let $F$ be a conformal anti-invariant submersion from a K\"{a}hler manifold
$(M,g,J)$ to a Riemannian manifold $(N,g^{\prime})$. Then any two conditions
below imply the three:

$%
\begin{array}
[c]{ll}%
(i) & (\ker F_{\ast})^{\perp}\text{ is integrable}\\
(ii) & F\text{ is horizontally homotetic.}\\
(iii) & g^{\prime}(\nabla_{Y}^{F}F_{\ast}CX-\nabla_{X}^{F}F_{\ast}CY,F_{\ast
}JV)=\lambda^{2}g(A_{X}BY-A_{Y}BX,JV)
\end{array}
$

for $X,Y\in\Gamma((\ker F_{\ast})^{\perp})$ and $V\in\Gamma(\ker F_{\ast})$.
\end{theorem}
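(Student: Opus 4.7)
The plan is to derive all three equivalences as direct corollaries of Theorem \ref{teo1}, which characterizes integrability of $(\ker F_{*})^{\perp}$ by the identity (b). The key observation is that condition (iii) is exactly what remains of (b) after multiplying by $\lambda^{2}$ and discarding the three terms that involve $\mathcal{H}\operatorname{grad}\ln\lambda$, while condition (ii) (horizontal homothety) is equivalent to $\mathcal{H}\operatorname{grad}\ln\lambda=0$. With this reading, two of the three implications become formal.

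Assuming (i) and (ii): Theorem \ref{teo1} supplies identity (b), and (ii) kills every term containing $\mathcal{H}\operatorname{grad}\ln\lambda$; multiplying the remainder by $\lambda^{2}$ yields (iii). Assuming (ii) and (iii): those same terms in (b) vanish under (ii), so (b) reduces to (iii)$/\lambda^{2}$, which holds by hypothesis; Theorem \ref{teo1} then gives (i).

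The substantive implication is (i) and (iii) imply (ii). Here (i) together with Theorem \ref{teo1} gives (b); subtracting (iii)$/\lambda^{2}$ leaves the pointwise identity
\[
0=-g(\mathcal{H}\operatorname{grad}\ln\lambda,CY)g(X,JV)+g(\mathcal{H}\operatorname{grad}\ln\lambda,CX)g(Y,JV)-2g(CX,Y)g(\mathcal{H}\operatorname{grad}\ln\lambda,JV)
\]
for all $X,Y\in\Gamma((\ker F_{*})^{\perp})$ and $V\in\Gamma(\ker F_{*})$. To deduce $\mathcal{H}\operatorname{grad}\ln\lambda=0$, I will test this identity against vectors chosen with respect to the decomposition $(\ker F_{*})^{\perp}=J(\ker F_{*})\oplus\mu$. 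First, setting $X=JV$ yields $CX=0$ and $g(X,JV)=g(V,V)$, so the identity collapses to $g(\mathcal{H}\operatorname{grad}\ln\lambda,CY)\,g(V,V)=0$; letting $Y$ range over $\mu$, where $CY=JY$ sweeps out $\mu$, shows $\mathcal{H}\operatorname{grad}\ln\lambda$ is orthogonal to $\mu$. Second, taking $X,Y\in\Gamma(\mu)$ forces $g(X,JV)=g(Y,JV)=0$ (since $\mu\perp J(\ker F_{*})$); the identity then reduces to $0=-2g(JX,Y)g(\mathcal{H}\operatorname{grad}\ln\lambda,JV)$, and choosing $Y=JX$ (permissible because $\mu$ is $J$-invariant) makes $g(JX,Y)=g(X,X)\neq 0$, forcing $g(\mathcal{H}\operatorname{grad}\ln\lambda,JV)=0$ for every vertical $V$. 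The two orthogonalities together exhaust $(\ker F_{*})^{\perp}$, so $\mathcal{H}\operatorname{grad}\ln\lambda=0$, which is (ii).

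The main obstacle is precisely this last step: the $\mu$-component of $\mathcal{H}\operatorname{grad}\ln\lambda$ is read off directly from the first two terms, but the $J(\ker F_{*})$-component appears only coupled to $g(CX,Y)$ in the last term, so isolating it requires the $J$-invariance of $\mu$ to produce a nonzero test coefficient (and tacitly uses $\mu\neq\{0\}$; in the opposite case $C\equiv 0$ and (b) coincides with (iii), placing no constraint on $\lambda$).
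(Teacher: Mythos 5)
Your proof is correct and follows essentially the same route as the paper: both reduce all three implications to the identity of Theorem \ref{teo1}, and for the substantive implication $(i),(iii)\Rightarrow(ii)$ both extract the $\mu$-component and the $J(\ker F_{\ast})$-component of $\mathcal{H}\operatorname{grad}\ln\lambda$ from the residual identity by substituting $JV$ and a $\mu$-vector paired with its $J$-image (your $Y=JX$ versus the paper's $Y=CX$ is the same test vector). Your closing remark that the argument tacitly needs $\mu\neq\{0\}$ --- since in the Lagrangian case $C\equiv 0$ and $(iii)$ collapses to the integrability condition, leaving $\lambda$ unconstrained --- is a legitimate caveat that the paper does not record.
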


\begin{proof}
For $X,Y\in\Gamma((\ker F_{\ast})^{\perp})$ and $V\in\Gamma(\ker F_{\ast}),$
from Theorem \ref{teo1}, we have%
\begin{align*}
g(\left[  X,Y\right]  ,V)  &  =g(A_{X}BY-A_{Y}BX,JV)-g(\mathcal{H}%
\operatorname{grad}\ln\lambda,CY)g(X,JV)\\
&+g(\mathcal{H}\operatorname{grad}%
\ln\lambda,CX)g(Y,JV)-2g(CX,Y)g(\mathcal{H}\operatorname{grad}\ln\lambda,JV)\\
&-\frac{1}%
{\lambda^{2}}g^{\prime}(\nabla_{F_{\ast}Y}F_{\ast}CX-\nabla_{F_{\ast}X}%
F_{\ast}CY,F_{\ast}JV).
\end{align*}
Now, if we have $(i)$ and $(iii)$, then we arrive at%
\begin{eqnarray}
&&-g(\mathcal{H}\operatorname{grad}\ln\lambda,CY)g(X,JV)+g(\mathcal{H}%
\operatorname{grad}\ln\lambda,CX)g(Y,JV)\nonumber\\
&&-2g(CX,Y)g(\mathcal{H}%
\operatorname{grad}\ln\lambda,JV)=0. \label{e.q:3.6}%
\end{eqnarray}
Now, taking $Y=JV$ in (\ref{e.q:3.6}) for $V\in \Gamma(kerF_{\ast})$ and using (\ref{e.q:3.4}), we get%
\[
g(\mathcal{H}\operatorname{grad}\ln\lambda,CX)g(V,V)=0.
\]
Hence $\lambda$ is a constant on $\Gamma(\mu)$. On the other hand,
 taking $Y=CX$ in (\ref{e.q:3.6}) for $X\in\Gamma(\mu)$ and using (\ref{e.q:3.4}) we
derive%
\begin{align*}
-g(\mathcal{H}\operatorname{grad}\ln\lambda,C^{2}X)g(X,JV)+& g(\mathcal{H}%
\operatorname{grad}\ln\lambda,CX)g(CX,JV)\\
& -2g(CX,CX)g(\mathcal{H}%
\operatorname{grad}\ln\lambda,JV)=0,
\end{align*}
hence, we arrive at
\[
g(CX,CX)g(\mathcal{H}\operatorname{grad}\ln\lambda,JV)=0.
\]
From above equation, $\lambda$ is a constant on $\Gamma(J(\ker F_{\ast}))$.
Similarly, one can obtain the other assertions.
\end{proof}

We say that a conformal anti-invariant submersion is a conformal Lagrangian
submersion if $J(\ker F_{\ast})=(\ker F_{\ast})^{\perp}$. From Theorem \ref{teo1}, we have the following.

\begin{corollary}
Let $F:(M,g,J)\rightarrow(N,g^{\prime})$ be a conformal Lagrangian submersion,
where $(M,g,J)$ is a K\"{a}hler manifold and $(N,g^{\prime})$ is a Riemannian
manifold. Then the following assertions are equivalent to each other;

$%
\begin{array}
[c]{ll}%
(i) & (\ker F_{\ast})^{\perp}\text{ is integrable.}\\
(ii) & A_{X}JY=A_{Y}JX\\
(iii) & (\nabla F_{\ast})(Y,JX)=(\nabla F_{\ast})(X,JY)
\end{array}
$

for $X,Y\in\Gamma((\ker F_{\ast})^{\perp})$.
\end{corollary}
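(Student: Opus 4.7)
The plan is to exploit the fact that in the conformal Lagrangian case we have $J(\ker F_*)=(\ker F_*)^\perp$, so that $\mu=\{0\}$, and therefore the decomposition $JX=BX+CX$ in \eqref{e.q:3.2} reduces to $JX=BX\in\Gamma(\ker F_*)$ with $CX=0$ for every $X\in\Gamma((\ker F_*)^\perp)$. This will cause almost every term that appears in the statement of Theorem~\ref{teo1} and in Lemma~\ref{lem1}(i) to collapse.

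First I would establish $(i)\Leftrightarrow(ii)$ by specializing Theorem~\ref{teo1}. Since $CX=CY=0$, the four conformal correction terms containing $\mathcal{H}\operatorname{grad}\ln\lambda$ vanish, and the term $\frac{1}{\lambda^{2}}g'(\nabla^{F}_{Y}F_{*}CX-\nabla^{F}_{X}F_{*}CY,F_{*}JV)$ vanishes as well. What remains is
\begin{equation*}
g([X,Y],V)=g(A_{X}JY-A_{Y}JX,JV)
\end{equation*}
for every $X,Y\in\Gamma((\ker F_{*})^{\perp})$ and $V\in\Gamma(\ker F_{*})$. From \eqref{e.q:2.8} one sees that $A_{X}JY$ and $A_{Y}JX$ lie in the horizontal distribution, so the vector $A_{X}JY-A_{Y}JX$ is horizontal; and the Lagrangian hypothesis guarantees that $\{JV : V\in\Gamma(\ker F_{*})\}$ spans all of $(\ker F_{*})^{\perp}$. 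Hence the right-hand side vanishes for every $V$ if and only if $A_{X}JY=A_{Y}JX$. The left-hand side vanishes for every $V$ precisely when $(\ker F_{*})^{\perp}$ is integrable, giving $(i)\Leftrightarrow(ii)$.

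Next I would prove $(ii)\Leftrightarrow(iii)$ by means of Lemma~\ref{lem1}(iii). For $X,Y\in\Gamma((\ker F_{*})^{\perp})$ in the Lagrangian situation, $JX$ and $JY$ are vertical, so the formula
\begin{equation*}
(\nabla F_{*})(Y,JX)=-F_{*}(\nabla^{M}_{Y}JX)=-F_{*}(A_{Y}JX),
\end{equation*}
where the last equality uses \eqref{e.q:2.8} together with the fact that $F_{*}$ annihilates the vertical component, is immediate; the analogous identity holds with the roles of $X$ and $Y$ interchanged. Therefore $(iii)$ is equivalent to $F_{*}(A_{X}JY)=F_{*}(A_{Y}JX)$. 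Since $A_{X}JY-A_{Y}JX$ is horizontal and $F_{*}$ restricted to $(\ker F_{*})^{\perp}$ is a conformal, hence injective, isomorphism onto $TN$, this last equality is equivalent to $A_{X}JY=A_{Y}JX$, which is $(ii)$.

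I do not anticipate a serious obstacle: the proof is essentially an extraction from Theorem~\ref{teo1} and Lemma~\ref{lem1} once one observes that $\mu=\{0\}$ kills the conformality correction terms. The only point that deserves care is the justification that $A_{X}JY-A_{Y}JX$ is horizontal so that orthogonality to all of $J(\ker F_{*})=(\ker F_{*})^{\perp}$ forces it to vanish; this is a direct consequence of \eqref{e.q:2.8}.
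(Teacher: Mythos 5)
Your proof is correct and follows essentially the same route as the paper: specialize Theorem \ref{teo1} using $\mu=\{0\}$ (so $CX=CY=0$ and $JX=BX$) to get $g([X,Y],V)=g(A_XJY-A_YJX,JV)$ for $(i)\Leftrightarrow(ii)$, and then identify $(\nabla F_{\ast})(Y,JX)$ with $-F_{\ast}(A_YJX)$ for $(ii)\Leftrightarrow(iii)$. The only cosmetic difference is that you invoke Lemma \ref{lem1}(iii) where the paper expands (\ref{e.q:2.15}) directly, which is the same computation; your explicit remarks that $J(\ker F_{\ast})$ spans the whole horizontal space and that $F_{\ast}$ is injective on it are exactly the (tacit) justifications the paper relies on.
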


\begin{proof}
For $X,Y\in\Gamma((\ker F_{\ast})^{\perp})$ and $V\in\Gamma(\ker F_{\ast}),$
we see from Definition \ref{def3.1}, $JV\in\Gamma((\ker F_{\ast})^{\perp})$
and $JY\in\Gamma(J(\ker F_{\ast}))$. From Theorem \ref{teo1} we have%
\begin{align*}
g(\left[  X,Y\right]  ,V)  &  =g(A_{X}BY-A_{Y}BX,JV)-g(\mathcal{H}%
\operatorname{grad}\ln\lambda,CY)g(X,JV)\\
& +g(\mathcal{H}\operatorname{grad}%
\ln\lambda,CX)g(Y,JV) -2g(CX,Y)g(\mathcal{H}\operatorname{grad}\ln\lambda,JV)\\
& -\frac{1}%
{\lambda^{2}}g^{\prime}(\nabla_{F_{\ast}Y}F_{\ast}CX-\nabla_{F_{\ast}X}%
F_{\ast}CY,F_{\ast}JV).
\end{align*}
Since $F$ is a conformal Lagrangian submersion, we derive%
\[
g(\left[  X,Y\right]  ,V)=g(A_{X}BY-A_{Y}BX,JV)=0
\]
which shows $(i)\Leftrightarrow(ii)$. On the other hand using Definition
\ref{def3.1} and (\ref{e.q:2.8}) we arrive at%
\begin{align*}
g(A_{X}BY,JV)-g(A_{Y}BX,JV)  &  =\frac{1}{\lambda^{2}}g^{\prime
}(F_{\ast}A_{X}BY,F_{\ast}JV)-\frac{1}{\lambda^{2}}g^{\prime}(F_{\ast}%
A_{Y}BX,F_{\ast}JV)\\
& =\frac{1}{\lambda^{2}}g^{\prime}(F_{\ast}(\nabla_{X}BY),F_{\ast
}JV)-\frac{1}{\lambda^{2}}g^{\prime}(F_{\ast}(\nabla_{Y}BX),F_{\ast}JV).
\end{align*}
Now, using (\ref{e.q:2.15}) we obtain%
\begin{align*}
\frac{1}{\lambda^{2}}\{g^{\prime}(F_{\ast}(\nabla_{X}BY),F_{\ast}JV)&-g^{\prime}(F_{\ast}(\nabla_{Y}BX),F_{\ast}JV)\}\\
& =\frac{1}{\lambda^{2}}g^{\prime}(-(\nabla F_{\ast})(X,BY)+\nabla
_{F_{\ast}X}F_{\ast}BY,F_{\ast}JV)\\
& -\frac{1}{\lambda^{2}}g^{\prime}(-(\nabla F_{\ast}%
)(Y,BX)+\nabla_{F_{\ast}Y}F_{\ast}BX,F_{\ast}JV)\\
&=\frac{1}{\lambda^{2}}\{g^{\prime}((\nabla F_{\ast})(Y,BX)-(\nabla F_{\ast})(X,BY), F_{\ast}JV)
\end{align*}
which tells that $(ii)\Leftrightarrow(iii)$.
\end{proof}
For the geometry of leaves of the horizontal distribution, we have the following theorem.
\begin{theorem}
\label{teo2}Let $F$ be a conformal anti-invariant submersion from a K\"{a}hler
manifold $(M,g,J)$ to a Riemannian manifold $(N,g^{\prime})$. Then the
following assertions are equivalent to each other;

$%
\begin{array}
[c]{ll}%
(i) & (\ker F_{\ast})^{\perp}\text{ defines a totally geodesic foliation on
}M.
\end{array}
$

$%
\begin{array}
[c]{lll}%
(ii) & \frac{1}{\lambda^{2}}g^{\prime}(\nabla_{F_{\ast}X}F_{\ast}CY,F_{\ast
}JV)= & -g(A_{X}BY,JV)+g(\mathcal{H}\operatorname{grad}\ln\lambda,CY)g(X,JV)\\
&  & -g(\mathcal{H}\operatorname{grad}\ln\lambda,JV)g(X,CY)
\end{array}
$

for $X,Y\in\Gamma((\ker F_{\ast})^{\perp})$ and $V\in\Gamma(\ker F_{\ast})$.
\end{theorem}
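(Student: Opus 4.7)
The plan is to follow the same template as Theorem \ref{teo1}, but this time starting from the defining condition that $(\ker F_\ast)^\perp$ be a totally geodesic foliation, namely $g(\nabla_X Y,V)=0$ for all $X,Y\in\Gamma((\ker F_\ast)^\perp)$ and $V\in\Gamma(\ker F_\ast)$.

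First I would use the K\"ahler condition (\ref{e.q:2.2}) together with $J$ being an isometry to rewrite
\[
g(\nabla_X Y,V)=g(\nabla_X JY,JV),
\]
and then split via the decomposition (\ref{e.q:3.2}):
\[
g(\nabla_X JY,JV)=g(\nabla_X BY,JV)+g(\nabla_X CY,JV).
\]
For the first term, $BY\in\Gamma(\ker F_\ast)$ and $X$ is horizontal, so (\ref{e.q:2.8}) gives $\nabla_X BY=A_X BY+\mathcal{V}\nabla_X BY$; since $JV\in\Gamma((\ker F_\ast)^\perp)$, the vertical part is orthogonal to $JV$ and we are left with $g(A_X BY,JV)$.

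For the second term, $CY\in\Gamma(\mu)\subset\Gamma((\ker F_\ast)^\perp)$, so both arguments are horizontal. I would pass to the base via the conformal relation, $g(\nabla_X CY,JV)=\frac{1}{\lambda^2}g^\prime(F_\ast\nabla_X CY,F_\ast JV)$, and then use (\ref{e.q:2.15}) to trade $F_\ast\nabla_X CY$ for $\nabla^F_{F_\ast X}F_\ast CY-(\nabla F_\ast)(X,CY)$. At this point Lemma \ref{lem1}(i) applies directly to $(\nabla F_\ast)(X,CY)$, producing three terms: one involving $g^\prime(F_\ast CY,F_\ast JV)$, which vanishes by (\ref{e.q:3.4}) after using conformality; one involving $g^\prime(F_\ast X,F_\ast JV)=\lambda^2 g(X,JV)$ paired with $CY(\ln\lambda)=g(CY,\mathcal{H}\operatorname{grad}\ln\lambda)$; and one involving $g(X,CY)$ paired with $g(\mathcal{H}\operatorname{grad}\ln\lambda,JV)$. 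Collecting everything and dividing by $\lambda^2$ yields
\[
g(\nabla_X Y,V)=g(A_X BY,JV)+\tfrac{1}{\lambda^2}g^\prime(\nabla^F_{F_\ast X}F_\ast CY,F_\ast JV)-g(\mathcal{H}\operatorname{grad}\ln\lambda,CY)g(X,JV)+g(\mathcal{H}\operatorname{grad}\ln\lambda,JV)g(X,CY),
\]
and setting the left-hand side equal to zero produces precisely condition (ii). The equivalence (i)$\Leftrightarrow$(ii) follows.

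The routine but delicate step is bookkeeping: making sure to replace $X(\ln\lambda)$ and $CY(\ln\lambda)$ by $g(X,\mathcal{H}\operatorname{grad}\ln\lambda)$ and $g(CY,\mathcal{H}\operatorname{grad}\ln\lambda)$ (legal since $X,CY$ are horizontal), and checking signs when moving the $(\nabla F_\ast)$ term across (\ref{e.q:2.15}). No vertical-vertical components of $\nabla$ enter since $JV$ is horizontal, which is what makes the computation collapse cleanly. The main conceptual point---identical in spirit to the proof of Theorem \ref{teo1}---is that the obstruction to $(\ker F_\ast)^\perp$ being totally geodesic is measured jointly by the O'Neill tensor $A$ on the $\ker F_\ast$ part of $JY$ and by the second fundamental form of $F$ (through $\nabla^F$ and $\operatorname{grad}\ln\lambda$) on the $\mu$ part of $JY$.
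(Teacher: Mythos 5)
Your proposal is correct and follows essentially the same route as the paper's own proof: decompose $JY=BY+CY$, handle the $BY$ part with the O'Neill tensor $A$ via (\ref{e.q:2.8}), and handle the $CY$ part by passing to the base through (\ref{e.q:2.15}) and Lemma \ref{lem1}(i), with the $g^{\prime}(F_{\ast}CY,F_{\ast}JV)$ term killed by (\ref{e.q:3.4}). The final identity and the resulting equivalence match the paper's computation exactly.
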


\begin{proof}
From (\ref{e.q:2.1}), (\ref{e.q:2.2}), (\ref{e.q:2.8}), (\ref{e.q:2.9}), (\ref{e.q:3.2}) and (\ref{e.q:3.1}) we
get
\[
g(\nabla_{X}Y,V)=g(A_{X}BY,JV)+g(\mathcal{H}\nabla_{X}CY,JV).
\]
Since $F$ is a conformal submersion, using (\ref{e.q:2.15}) and Lemma \ref{lem1}
(i) we arrive at%
\begin{align*}
g(\nabla_{X}Y,V) &  =g(A_{X}BY,JV)-\frac{1}{\lambda^{2}}g(\mathcal{H}\operatorname{grad}%
\ln\lambda,X)g^{\prime}(F_{\ast}CY,F_{\ast}JV)\\
&-\frac{1}{\lambda^{2}%
}g(\mathcal{H}\operatorname{grad}\ln\lambda,CY)g^{\prime}(F_{\ast}X,F_{\ast
}JV)  +\frac{1}{\lambda^{2}}g(X,CY)g^{\prime}(F_{\ast}(\operatorname{grad}%
\ln\lambda),F_{\ast}JV)\\
&+\frac{1}{\lambda^{2}}g^{\prime}(\nabla_{F_{\ast}%
X}F_{\ast}CY,F_{\ast}JV).
\end{align*}
Moreover, using Definition \ref{def3.1} and (\ref{e.q:3.4}) we obtain%
\begin{align*}
g(\nabla_{X}Y,V)  &  =g(A_{X}BY,JV)-g(\mathcal{H}\operatorname{grad}\ln
\lambda,CY)g(X,JV)\\
& +g(\mathcal{H}\operatorname{grad}\ln\lambda,JV)g(X,CY)+\frac{1}{\lambda^{2}}g^{\prime}(\nabla_{F_{\ast}X}F_{\ast}CY,F_{\ast}JV)
\end{align*}
which proves $(i)\Leftrightarrow(ii)$.
\end{proof}

From Theorem \ref{teo2}, we  also deduce the following characterization.

\begin{theorem}
Let $F$ be a conformal anti-invariant submersion from a K\"{a}hler manifold
$(M,g,J)$ to a Riemannian manifold $(N,g^{\prime})$. Then any two conditions
below imply the three:

$%
\begin{array}
[c]{ll}%
(i) & (\ker F_{\ast})^{\perp}\text{ defines a totally geodesic foliation on
}M.\\
(ii) & F\text{ is horizontally homotetic.}\\
(iii) & g^{\prime}(\nabla_{F_{\ast}X}F_{\ast}CY,F_{\ast}JV)=-\lambda^{2}%
g(A_{X}BY,JV)
\end{array}
$

for $X,Y\in\Gamma((\ker F_{\ast})^{\perp})$ and $V\in\Gamma(\ker F_{\ast})$.
\end{theorem}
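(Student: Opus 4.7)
The plan is to leverage Theorem \ref{teo2}, whose equivalence $(i)\Leftrightarrow(ii)$ already encodes all the relevant data: $(i)$ is equivalent to
\[
\frac{1}{\lambda^{2}}g^{\prime}(\nabla_{F_{\ast}X}F_{\ast}CY,F_{\ast}JV)
= -g(A_{X}BY,JV)+g(\mathcal{H}\operatorname{grad}\ln\lambda,CY)g(X,JV)
-g(\mathcal{H}\operatorname{grad}\ln\lambda,JV)g(X,CY).
\]
With this identity available, the three implications of the ``any two imply three'' statement will reduce to tracking how the horizontal-gradient terms behave under each pair of hypotheses, and the argument will mirror the analogous one already carried out for the integrability version.

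Two of the three implications are essentially free. Under $(ii)$ the condition $\mathcal{H}\operatorname{grad}\ln\lambda=0$ wipes out the last two summands, so the displayed identity collapses to $(iii)$ after multiplication by $\lambda^{2}$; this delivers both $(i)+(ii)\Rightarrow(iii)$ and $(ii)+(iii)\Rightarrow(i)$ at once, since in the latter case the Theorem \ref{teo2} identity is verified and hence $(i)$ holds.

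The main obstacle is the implication $(i)+(iii)\Rightarrow(ii)$, and this is where the choice of test vectors matters. Assuming $(i)$ puts the Theorem \ref{teo2} identity at our disposal; assuming $(iii)$ simplifies its left-hand side to $-g(A_{X}BY,JV)$, and after cancelling the matching $A_{X}BY$ terms one is left with the constraint
\[
g(\mathcal{H}\operatorname{grad}\ln\lambda,CY)g(X,JV)=g(\mathcal{H}\operatorname{grad}\ln\lambda,JV)g(X,CY)
\]
valid for all $X,Y\in\Gamma((\ker F_{\ast})^{\perp})$ and $V\in\Gamma(\ker F_{\ast})$. The plan is then to probe this constraint with two well-chosen substitutions. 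First I would set $X=JV$: since $J(\ker F_{\ast})\perp\mu$ by (\ref{e.q:3.1}), the right-hand side vanishes and one obtains $g(\mathcal{H}\operatorname{grad}\ln\lambda,CY)\|V\|^{2}=0$; letting $Y$ run through $\mu$, on which $CY=JY$ sweeps out all of $\mu$, this shows $\mathcal{H}\operatorname{grad}\ln\lambda$ is orthogonal to $\mu$. Second I would set $Y=CX$ with $X\in\Gamma(\mu)$, so that $CY=-X$ and $g(X,JV)=0$; only the $JV$-summand survives and yields $\|X\|^{2}g(\mathcal{H}\operatorname{grad}\ln\lambda,JV)=0$, hence orthogonality to $J(\ker F_{\ast})$. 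Since $(\ker F_{\ast})^{\perp}=J(\ker F_{\ast})\oplus\mu$, these two orthogonalities together force $\mathcal{H}\operatorname{grad}\ln\lambda=0$, which is precisely $(ii)$.
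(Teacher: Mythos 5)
Your proposal is correct and follows essentially the same route as the paper: both start from the identity of Theorem \ref{teo2}, reduce $(i)+(iii)$ to the constraint $-g(\mathcal{H}\operatorname{grad}\ln\lambda,CY)g(X,JV)+g(\mathcal{H}\operatorname{grad}\ln\lambda,JV)g(X,CY)=0$, and test it with $X=JV$ and with a $\mu$-valued substitution (your $Y=CX$ is the paper's $X=CY$ up to sign) to kill the horizontal gradient on $\mu$ and on $J(\ker F_{\ast})$ separately. You in fact spell out the remaining two implications, which the paper leaves to the reader.
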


\begin{proof}
For $X,Y\in\Gamma((\ker F_{\ast})^{\perp})$ and $V\in\Gamma(\ker F_{\ast}),$
from Theorem \ref{teo2}, we have%
\begin{align*}
g(\nabla_{X}Y,V)  &  =g(A_{X}BY,JV)-g(\mathcal{H}\operatorname{grad}\ln
\lambda,CY)g(X,JV)+g(\mathcal{H}\operatorname{grad}\ln\lambda,JV)g(X,CY)\\
&  +\frac{1}{\lambda^{2}}g^{\prime}(\nabla_{F_{\ast}X}F_{\ast}CY,F_{\ast}JV).
\end{align*}
Now, if we have $(i)$ and $(iii)$, then we obtain%
\begin{equation}
-g(\mathcal{H}\operatorname{grad}\ln\lambda,CY)g(X,JV)+g(\mathcal{H}%
\operatorname{grad}\ln\lambda,JV)g(X,CY)=0. \label{e.q:3.7}%
\end{equation}
Now, taking $X=CY$ in (\ref{e.q:3.7}) and using (\ref{e.q:3.4}), we get%
\[
g(\mathcal{H}\operatorname{grad}\ln\lambda,JV)g(CY,CY)=0.
\]
Thus, $\lambda$ is a constant on $\Gamma(J(\ker F_{\ast}))$. On the other
hand, taking $X=JV$ in (\ref{e.q:3.7}) and using (\ref{e.q:3.4}) we derive
\[
g(\mathcal{H}\operatorname{grad}\ln\lambda,CY)g(V,V)=0.
\]
From above equation, $\lambda$ is a constant on $\Gamma(\mu)$. Similarly, one
can obtain the other assertions.
\end{proof}

In particular, if $F$ is a conformal Lagrangian submersion, then we
have the following.

\begin{corollary}
\label{cor1}Let $F:(M,g,J)\rightarrow(N,g^{\prime})$ be a conformal Lagrangian
submersion, where $(M,g,J)$ is a K\"{a}hler manifold and $(N,g^{\prime})$ is a
Riemannian manifold. Then the following assertions are equivalent to each other;

$%
\begin{array}
[c]{ll}%
(i) & (\ker F_{\ast})^{\perp}\text{ defines a totally geodesic foliation on
}M.\\
(ii) & A_{X}JY=0\\
(iii) & (\nabla F_{\ast})(X,JY)=0
\end{array}
$

for $X,Y\in\Gamma((\ker F_{\ast})^{\perp})$.
\end{corollary}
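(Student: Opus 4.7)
The plan is to specialize Theorem \ref{teo2} to the conformal Lagrangian setting, where the decomposition in (\ref{e.q:3.1}) collapses, and then convert between the O'Neill tensor characterization and the second fundamental form characterization via Lemma \ref{lem1}.

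The first observation is that under the Lagrangian hypothesis $J(\ker F_{\ast})=(\ker F_{\ast})^{\perp}$, applying $J$ and using $J^{2}=-I$ gives $J((\ker F_{\ast})^{\perp})=\ker F_{\ast}$. Consequently, for every $Y\in\Gamma((\ker F_{\ast})^{\perp})$ the decomposition (\ref{e.q:3.2}) reduces to $JY=BY\in\Gamma(\ker F_{\ast})$ with $CY=0$, so the distribution $\mu$ is trivial. Substituting $CY=0$ into the right-hand side of condition (ii) of Theorem \ref{teo2} kills all three terms that involve $C$ or $\mathcal{H}\operatorname{grad}\ln\lambda$, and the equation reduces to $g(A_{X}BY,JV)=0$, i.e.,
\[
g(A_{X}JY,JV)=0\quad\text{for all }V\in\Gamma(\ker F_{\ast}).
\]

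Next I would verify that this last identity is equivalent to $A_{X}JY=0$. Since $X$ is horizontal and $JY$ is now vertical, the defining formula (\ref{e.q:2.4}) shows $A_{X}JY=\mathcal{H}\nabla_{X}JY$, which lies in $(\ker F_{\ast})^{\perp}$. Because $\{JV:V\in\Gamma(\ker F_{\ast})\}$ spans $(\ker F_{\ast})^{\perp}$ by the Lagrangian assumption, the vanishing of $g(A_{X}JY,JV)$ for all such $V$ is equivalent to $A_{X}JY=0$. This establishes $(i)\Leftrightarrow(ii)$.

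For the equivalence $(ii)\Leftrightarrow(iii)$, I would apply Lemma \ref{lem1}(iii) with $JY\in\Gamma(\ker F_{\ast})$ playing the role of the vertical vector field, obtaining
\[
(\nabla F_{\ast})(X,JY)=-F_{\ast}(\nabla_{X}JY).
\]
Decomposing $\nabla_{X}JY$ via (\ref{e.q:2.8}) as $A_{X}JY+\mathcal{V}\nabla_{X}JY$ and using that $F_{\ast}$ annihilates the vertical part, this becomes $(\nabla F_{\ast})(X,JY)=-F_{\ast}(A_{X}JY)$. Since $A_{X}JY$ is horizontal and $F_{\ast}$ restricted to $(\ker F_{\ast})^{\perp}$ is a (conformal) isomorphism onto $TN$, the vanishing of $(\nabla F_{\ast})(X,JY)$ is equivalent to $A_{X}JY=0$, completing the proof.

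I expect no substantial obstacle here; the argument is essentially a clean specialization in which the conformal-factor terms disappear automatically, and the only point that requires slight care is the identification $A_{X}JY\in(\ker F_{\ast})^{\perp}$ needed to upgrade the inner-product identity to a pointwise vanishing.
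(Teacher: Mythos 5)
Your proof is correct and follows essentially the same route as the paper: specialize Theorem \ref{teo2} to the Lagrangian case where $C=0$ and $JY=BY$ is vertical, reduce condition (i) to $g(A_{X}JY,JV)=0$, and then pass to $(\nabla F_{\ast})(X,JY)$. The only (minor) difference is that you obtain $(ii)\Leftrightarrow(iii)$ directly from Lemma \ref{lem1}(iii) together with the injectivity of $F_{\ast}$ on $(\ker F_{\ast})^{\perp}$, which is a slightly cleaner packaging of the paper's inner-product computation via (\ref{e.q:2.15}) and in fact yields the full equivalence where the paper only writes out $(ii)\Rightarrow(iii)$.
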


\begin{proof}
For $X,Y\in\Gamma((\ker F_{\ast})^{\perp})$ and $V\in\Gamma(\ker F_{\ast}),$
we see from Definition \ref{def3.1}, $JV\in\Gamma((\ker F_{\ast})^{\perp})$
and $JY\in\Gamma(J(\ker F_{\ast}))$. From Theorem \ref{teo2} we have%
\begin{align*}
g(\nabla_{X}Y,V)  &  =g(A_{X}BY,JV)-g(\mathcal{H}\operatorname{grad}\ln
\lambda,CY)g(X,JV)+g(\mathcal{H}\operatorname{grad}\ln\lambda,JV)g(X,CY)\\
&  +\frac{1}{\lambda^{2}}g^{\prime}(\nabla_{F_{\ast}X}F_{\ast}CY,F_{\ast}JV).
\end{align*}
Since $F$ is a conformal Lagrangian submersion, we derive%
\[
g(\nabla_{X}Y,V)=g(A_{X}BY,JV)
\]
which shows $(i)\Leftrightarrow(ii)$. On the other hand using (\ref{e.q:2.8}) we get%
$$
g(A_{X}BY,JV) = g(\nabla_{X}BY,JV).$$
Since $F$ is a conformal submersion, we have
$$g(A_{X}BY,JV)=\frac{1}{\lambda^{2}}g^{\prime}(F_{\ast}\nabla_{X}BY,F_{\ast
}JV).$$
Then using (\ref{e.q:2.15}) we get
$$g(A_{X}BY,JV)=- \frac{1}{\lambda^{2}}g^{\prime}((\nabla F_{\ast})(X,BY),F_{\ast
}JV)$$
which tells that $(ii)\Rightarrow(iii)$.
\end{proof}

In the sequel we are going to investigate the geometry of leaves of the  distribution $\ker F_{\ast}$.

\begin{theorem}
\label{teo3}Let $F:(M,g,J)\rightarrow(N,g^{\prime})$ be a conformal anti-invariant
submersion, where $(M,g,J)$ is a K\"{a}hler manifold and $(N,g^{\prime})$ is a
Riemannian manifold. Then the following assertions are equivalent to each other;

$%
\begin{array}
[c]{ll}%
(i) & \ker F_{\ast}\text{ defines a totally geodesic foliation on }M.
\end{array}
$

$%
\begin{array}
[c]{lll}%
(ii) & -\frac{1}{\lambda^{2}}g^{\prime}(\nabla_{F_{\ast}JW}F_{\ast}JV,F_{\ast
}JCX)= & g(T_{V}JW,BX)+g(JW,JV)g(\mathcal{H}\operatorname{grad}\ln\lambda,JCX)
\end{array}
$

for $V,W\in\Gamma(\ker F_{\ast})$ and $X\in\Gamma((\ker F_{\ast})^{\perp})$.
\end{theorem}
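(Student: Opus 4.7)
The plan is to compute $g(\nabla_V W, X)$ for $V, W \in \Gamma(\ker F_*)$ and $X \in \Gamma((\ker F_*)^\perp)$, rearrange it into the form prescribed by (ii), and read off the equivalence from the observation that (i) is exactly the statement $g(\nabla_V W, X) = 0$ for all such triples. I would begin by using the $J$-invariance of $g$ in (\ref{e.q:2.1}), the K\"ahler condition $\nabla J=0$ in (\ref{e.q:2.2}), and the decomposition $JX = BX + CX$ from (\ref{e.q:3.2}) to split
\[
g(\nabla_V W, X) = g(\nabla_V JW, JX) = g(\nabla_V JW, BX) + g(\nabla_V JW, CX).
\]
By (\ref{e.q:2.7}), the first summand reduces to $g(T_V JW, BX)$, because the horizontal part $\mathcal{H}\nabla_V JW$ is orthogonal to the vertical $BX$; this already contributes the first term of the right-hand side of (ii).

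The main task, and the main obstacle, is the second summand. A naive application of (\ref{e.q:2.15}) along the vertical direction $V$ produces a pullback-connection term with vertical differentiation direction, which is off-format for (ii); accordingly, I would first re-route the differentiation direction from the vertical $V$ to the horizontal $JW$ by applying K\"ahler twice:
\[
g(\nabla_V JW, CX) = g(\mathcal{H}\nabla_V JW, CX) = g(A_{JW} V, CX) = g(\nabla_{JW} V, CX) = g(\nabla_{JW} JV, JCX).
\]
The first equality is vertical--horizontal orthogonality; the identification $\mathcal{H}\nabla_V JW = A_{JW} V$ is the standard fact for a basic choice of $JW$, a reduction we may make because each side of (ii) is tensorial in $V, W, X$ at a point (the potentially non-tensorial piece is the pullback-connection term, but its non-tensorial contributions cancel in the pairing with $F_*JCX$ thanks to $g'(F_*JV, F_*JCX) = \lambda^{2} g(V, CX) = 0$); the third equality drops the vertical part of $\nabla_{JW} V$; and the last combines (\ref{e.q:2.1}) with $\nabla J = 0$.

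With the direction of differentiation now horizontal, the pattern used in the proofs of Theorems \ref{teo1} and \ref{teo2} applies: from (\ref{e.q:2.9}), conformality, (\ref{e.q:2.15}), and Lemma \ref{lem1}(i), noting that $A_{JW}JV$ is vertical and hence drops out of both the pairing and $F_*$,
\[
g(\nabla_{JW} JV, JCX) = \tfrac{1}{\lambda^{2}}\, g'\!\bigl(\nabla_{F_*JW} F_*JV,\, F_*JCX\bigr) - \tfrac{1}{\lambda^{2}}\, g'\!\bigl((\nabla F_*)(JW, JV),\, F_*JCX\bigr).
\]
Expanding $(\nabla F_*)(JW, JV)$ by Lemma \ref{lem1}(i) kills the two $\ln\lambda$-linear contributions through $g'(F_*JV, F_*JCX) = \lambda^{2} g(V, CX) = 0$ and the analogous identity for $JW$, leaving only $+\, g(JW, JV)\, g(\mathcal{H}\operatorname{grad}\ln\lambda, JCX)$. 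Combining the two summands yields
\[
g(\nabla_V W, X) = g(T_V JW, BX) + \tfrac{1}{\lambda^{2}}\, g'\!\bigl(\nabla_{F_*JW} F_*JV, F_*JCX\bigr) + g(JW, JV)\, g(\mathcal{H}\operatorname{grad}\ln\lambda, JCX),
\]
and setting the left-hand side to zero is exactly the condition (ii), establishing $(i)\Leftrightarrow(ii)$.
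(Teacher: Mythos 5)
Your proposal is correct and follows essentially the same route as the paper: both reduce $g(\nabla_V W,X)$ to $g(T_VJW,BX)+g(\nabla_{JW}JV,JCX)$ (the paper via torsion-freeness and the verticality of $[V,JW]$ for basic $JW$, you via the equivalent identity $\mathcal{H}\nabla_VJW=A_{JW}V=\mathcal{H}\nabla_{JW}V$) and then apply Lemma \ref{lem1}(i) together with $g'(F_*JV,F_*JCX)=0$ to kill the two $\ln\lambda$-linear terms. Your explicit justification of the tensoriality of the pullback-connection term is a small point of extra care that the paper omits.
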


\begin{proof}
For $V,W\in\Gamma(\ker F_{\ast})$ and $X\in\Gamma((\ker F_{\ast})^{\perp})$,
from (\ref{e.q:2.1}), (\ref{e.q:2.2}), (\ref{e.q:2.7}) and (\ref{e.q:3.2}) we get%
$$
g(\nabla_{V}W,X) =g(T_{V}JW,BX)+g(\mathcal{H}\nabla_{V}JW,CX).$$
Since $\nabla$ is torsion free and $\left[  V,JW\right]\in \Gamma(\ker F_{\ast})$, we obtain
$$g(\nabla_{V}W,X)=g(T_{V}JW,BX)+g(\nabla_{JW}V,CX).$$
Using  (\ref{e.q:2.2}) and (\ref{e.q:2.9}) we have%
\begin{align*}
 g(\nabla_{V}W,X)=g(T_{V}JW,BX)+g(\nabla_{JW}JV,JCX),
\end{align*}
here we have used that $\mu$ is invariant. Since $F$ is a conformal submersion, using (\ref{e.q:2.15}) and Lemma \ref{lem1}
(i) we obtain%
\begin{align*}
g(\nabla_{V}W,X)  &  =g(T_{V}JW,BX)-\frac{1}{\lambda^{2}}g(\mathcal{H}\operatorname{grad}%
\ln\lambda,JW)g^{\prime}(F_{\ast}JV,F_{\ast}JCX)\\
&-\frac{1}{\lambda^{2}%
}g(\mathcal{H}\operatorname{grad}\ln\lambda,JV)g^{\prime}(F_{\ast}JW,F_{\ast
}JCX)  +g(JW,JV)\frac{1}{\lambda^{2}}g^{\prime}(F_{\ast}\operatorname{grad}%
\ln\lambda,F_{\ast}JCX)\nonumber\\
& +\frac{1}{\lambda^{2}}g^{\prime}(\nabla_{F_{\ast}%
JW}F_{\ast}JV,F_{\ast}JCX).
\end{align*}
Moreover, using Definition \ref{def3.1} and (\ref{e.q:3.4}) we derive%
\begin{align*}
g(\nabla_{V}W,X)& =g(T_{V}JW,BX)+g(JW,JV)g(\mathcal{H}\operatorname{grad}%
\ln\lambda,JCX)\\
& +\frac{1}{\lambda^{2}}g^{\prime}(\nabla_{F_{\ast}JW}F_{\ast
}JV,F_{\ast}JCX)
\end{align*}
which proves $(i)\Leftrightarrow(ii)$.
\end{proof}

From Theorem \ref{teo3}, we deduce have the following result.

\begin{theorem}
Let $F$ be a conformal anti-invariant submersion from a K\"{a}hler manifold
$(M,g,J)$ to a Riemannian manifold $(N,g^{\prime})$. Then any two conditions
below imply the three:

$%
\begin{array}
[c]{ll}%
(i) & \ker F_{\ast}\text{ defines a totally geodesic foliation on
}M.\\
(ii) & \lambda\text{ is a constant on }\Gamma(\mu)\text{.}\\
(iii) & \frac{1}{\lambda^{2}}g^{\prime}(\nabla_{F_{\ast}JW}F_{\ast}JV,F_{\ast
}JCX)=-g(T_{V}JW,JX)
\end{array}
$

for $V,W\in\Gamma(\ker F_{\ast})$ and $X\in\Gamma((\ker F_{\ast})^{\perp})$.
\end{theorem}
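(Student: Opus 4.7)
The plan is to revisit the identity derived in the proof of Theorem \ref{teo3}, namely
\begin{equation*}
g(\nabla_{V}W,X)=g(T_{V}JW,BX)+g(JW,JV)\,g(\mathcal{H}\operatorname{grad}\ln\lambda,JCX)+\frac{1}{\lambda^{2}}g^{\prime}(\nabla_{F_{\ast}JW}F_{\ast}JV,F_{\ast}JCX),
\end{equation*}
valid for all $V,W\in\Gamma(\ker F_{\ast})$ and $X\in\Gamma((\ker F_{\ast})^{\perp})$. Each of the three implications will be obtained by cancelling one of the three summands in this single identity.

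Before the case analysis, I record two auxiliary facts. First, since $T_{V}$ interchanges the vertical and horizontal subbundles and $JW$ is horizontal, $T_{V}JW$ lies in $\Gamma(\ker F_{\ast})$; combined with $CX\in\Gamma(\mu)\subset\Gamma((\ker F_{\ast})^{\perp})$, this yields $g(T_{V}JW,CX)=0$, and hence $g(T_{V}JW,JX)=g(T_{V}JW,BX)$ via the decomposition $JX=BX+CX$. Secondly, $\mu$ is $J$-invariant, so as $X$ runs through $\Gamma(\mu)$ the vector $JCX=JX$ exhausts $\Gamma(\mu)$.

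The three implications now follow. For $(i)\wedge(ii)\Rightarrow(iii)$, condition (i) makes the left-hand side vanish, while (ii) means $\mathcal{H}\operatorname{grad}\ln\lambda\perp\mu$, so the middle term drops out since $JCX\in\Gamma(\mu)$; what remains is exactly (iii) after using $g(T_{V}JW,BX)=g(T_{V}JW,JX)$. For $(ii)\wedge(iii)\Rightarrow(i)$ the middle term vanishes by (ii) and (iii) cancels the remaining pair, yielding $g(\nabla_{V}W,X)=0$ for every horizontal $X$. For $(i)\wedge(iii)\Rightarrow(ii)$, the identity collapses to
\begin{equation*}
0=g(JW,JV)\,g(\mathcal{H}\operatorname{grad}\ln\lambda,JCX);
\end{equation*}
taking $V=W\neq 0$ so that $g(JW,JV)=g(V,V)>0$ and letting $X$ vary over $\Gamma(\mu)$ shows that $\mathcal{H}\operatorname{grad}\ln\lambda$ is orthogonal to $\mu$, which is (ii).

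The only mild obstacle is keeping track of the two orthogonality identifications: $g(T_{V}JW,JX)=g(T_{V}JW,BX)$ and the fact that $JCX$ sweeps $\Gamma(\mu)$. Both are immediate from the $J$-invariance of $\mu$ and the definition of $B,C$, and no new calculation beyond that of Theorem \ref{teo3} is required.
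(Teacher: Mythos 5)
Your proof is correct and follows essentially the same route as the paper: both rest on the identity
$g(\nabla_{V}W,X)=g(T_{V}JW,BX)+g(JW,JV)\,g(\mathcal{H}\operatorname{grad}\ln\lambda,JCX)+\frac{1}{\lambda^{2}}g^{\prime}(\nabla_{F_{\ast}JW}F_{\ast}JV,F_{\ast}JCX)$
established in the proof of Theorem \ref{teo3}, and each implication is obtained by cancelling the appropriate terms, with the paper writing out only the case $(i)\wedge(iii)\Rightarrow(ii)$ and leaving the rest as ``similarly'' (your treatment of the $JX$ versus $BX$ discrepancy via $g(T_{V}JW,CX)=0$ is a welcome extra detail). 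One trivial slip: for $X\in\Gamma(\mu)$ one has $JCX=J(JX)=-X$ rather than $JX$, but your conclusion that $JCX$ sweeps out $\Gamma(\mu)$ is unaffected.
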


\begin{proof}
For $V,W\in\Gamma(\ker F_{\ast})$ and $X\in\Gamma((\ker F_{\ast})^{\perp}),$
from Theorem \ref{teo3}, we have%
\[
g(\nabla_{V}W,X)=g(T_{V}JW,BX)+g(JW,JV)g(\mathcal{H}\operatorname{grad}%
\ln\lambda,JCX)+\frac{1}{\lambda^{2}}g^{\prime}(\nabla_{F_{\ast}JW}F_{\ast
}JV,F_{\ast}JCX).
\]
Now, if we have $(i)$ and $(iii)$, then we get%
\[
g(JW,JV)g(\mathcal{H}\operatorname{grad}\ln\lambda,JCX)=0.
\]
From above equation, $\lambda$ is a constant on $\Gamma(\mu)$. Similarly, one
can obtain the other assertions.
\end{proof}

If $F$ is a conformal Lagrangian submersion, then (\ref{e.q:3.3}) implies that
$TN=F_{\ast}(J(\ker F_{\ast}))$. Hence we have the following.

\begin{corollary}
\label{cor2}Let $F:(M,g,J)\rightarrow(N,g^{\prime})$ be a conformal Lagrangian
submersion, where $(M,g,J)$ is a K\"{a}hler manifold and $(N,g^{\prime})$ is a
Riemannian manifold. Then the following assertions are equivalent to each other;

$%
\begin{array}
[c]{ll}%
(i) & \ker F_{\ast}\text{ defines a totally geodesic foliation on
}M.\\
(ii) & T_{V}JW=0
\end{array}
$

for $V,W\in\Gamma(\ker F_{\ast})$.
\end{corollary}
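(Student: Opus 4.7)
The plan is to deduce this corollary as a direct specialization of Theorem \ref{teo3} to the conformal Lagrangian setting. The key observation is that the hypothesis $J(\ker F_\ast) = (\ker F_\ast)^\perp$, together with the decomposition (\ref{e.q:3.1}), forces the invariant complement $\mu$ to be trivial, so that (\ref{e.q:3.2}) reduces to $JX = BX$ with $CX = 0$ for every $X \in \Gamma((\ker F_\ast)^\perp)$.

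First I would revisit the identity obtained inside the proof of Theorem \ref{teo3}, namely
\[
g(\nabla_V W, X) = g(T_V JW, BX) + g(JW, JV)\,g(\mathcal{H}\operatorname{grad}\ln\lambda, JCX) + \frac{1}{\lambda^2}\,g^{\prime}(\nabla_{F_\ast JW}^F F_\ast JV, F_\ast JCX),
\]
and substitute $CX = 0$ and $BX = JX$. Both the horizontal-gradient term and the pull-back-connection term vanish, leaving the clean identity
\[
g(\nabla_V W, X) = g(T_V JW, JX).
\]

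The distribution $\ker F_\ast$ defines a totally geodesic foliation precisely when the left-hand side vanishes for all $V, W \in \Gamma(\ker F_\ast)$ and $X \in \Gamma((\ker F_\ast)^\perp)$; by the displayed equation this is equivalent to $g(T_V JW, JX) = 0$ for all such $X$. Since in the Lagrangian case $J$ restricts to a bundle isomorphism $(\ker F_\ast)^\perp \to \ker F_\ast$, the vector $JX$ sweeps out the entire vertical distribution as $X$ ranges over the horizontal distribution, so the condition becomes $T_V JW \perp \Gamma(\ker F_\ast)$.

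The only step that is not purely notational — and the one I view as the main (if modest) obstacle — is the final one: one must note from (\ref{e.q:2.7}) that $T_V$ sends the horizontal vector $JW$ to a vertical vector, so $T_V JW$ already lies in $\Gamma(\ker F_\ast)$. Orthogonality to all of $\ker F_\ast$ then upgrades to the vanishing $T_V JW = 0$, yielding $(i) \Leftrightarrow (ii)$. This small vertical/horizontal bookkeeping is the only place where the special nature of the tensor $T$ enters, and it is easy to miss in a purely formal substitution.
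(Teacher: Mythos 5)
Your proposal is correct and follows essentially the same route as the paper: specialize the identity from Theorem \ref{teo3} to the Lagrangian case where $\mu=\{0\}$, so $CX=0$ and the gradient and pull-back terms drop out, leaving $g(\nabla_V W,X)=g(T_V JW,BX)$. The only difference is that you make explicit the final bookkeeping (that $T_V JW$ is vertical and $BX=JX$ sweeps out all of $\ker F_\ast$), which the paper leaves implicit; this is a welcome clarification, not a different argument.
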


\begin{proof}
For $V,W\in\Gamma(\ker F_{\ast})$ and $X\in\Gamma((\ker F_{\ast})^{\perp}),$
from Theorem \ref{teo3} we have%
\[
g(\nabla_{V}W,X)=g(T_{V}JW,BX)+g(JW,JV)g(\mathcal{H}\operatorname{grad}%
\ln\lambda,JCX)+\frac{1}{\lambda^{2}}g^{\prime}(\nabla_{F_{\ast}JW}F_{\ast
}JV,F_{\ast}JCX).
\]
Since $F$ is a conformal Lagrangian submersion, we get%
\[
g(\nabla_{V}W,X)=g(T_{V}JW,BX)
\]
which shows $(i)\Leftrightarrow(ii)$.
\end{proof}
\section{{\protect \textbf{Harmonicity of Conformal Anti-invariant Submersions}\label%
{sect-prel}}}
In this section, we are going to find necessary and sufficient conditions for a conformal anti-invariant submersions to be harmonic. We also investigate the necessary and sufficient conditions for such submersions to be totally geodesic.
\begin{theorem}
\label{teo4}Let $F:(M^{2m+2r},g,J)\rightarrow(N^{m+2r},g^{\prime})$ be a
conformal anti-invariant submersion, where $(M,g,J)$ is a K\"{a}hler manifold
and $(N,g^{\prime})$ is a Riemannian manifold. Then the tension field $\tau$
of $F$ is%
\begin{equation}%
\begin{array}
[c]{ll}%
\tau(F)= & -\frac{1}{m}F_{\ast}(\mu^{\ker F_{\ast}})+(\frac{2}{\lambda^{2}%
}-(m+2r))F_{\ast}(\operatorname{grad}\ln\lambda)\mid_{F_{\ast}(JV)}\\
& +(\frac{2}{\lambda^{2}}-(m+2r))F_{\ast}(\operatorname{grad}\ln\lambda
)\mid_{F_{\ast}(\mu)}%
\end{array}
\label{e.q:3.8}%
\end{equation}
where $\mu^{\ker F_{\ast}}$ is the mean curvature vector field of the
distribution of $\ker F_{\ast}$.
\end{theorem}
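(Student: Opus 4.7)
The plan is to expand $\tau(F)=\sum_{i=1}^{2m+2r}(\nabla F_\ast)(e_i,e_i)$ in an adapted orthonormal frame of $TM$ and to evaluate each piece by Lemma \ref{lem1}.

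First I would fix the frame by exploiting the orthogonal decomposition $TM=\ker F_\ast\oplus J(\ker F_\ast)\oplus\mu$ coming from (\ref{e.q:3.1}). Let $\{u_1,\dots,u_m\}$ be an orthonormal basis of $\ker F_\ast$; because $J$ is an isometry for $g$ by (\ref{e.q:2.1}) and the anti-invariance assumption places $J(\ker F_\ast)$ inside $(\ker F_\ast)^{\perp}$, the vectors $\{Ju_1,\dots,Ju_m\}$ form an orthonormal basis of $J(\ker F_\ast)$. Let $\{e_1,\dots,e_{2r}\}$ be an orthonormal basis of $\mu$. The dimensions match: $m+m+2r=2m+2r$, so the union is a global orthonormal frame on $M$ and
\begin{equation*}
\tau(F)=\sum_{i=1}^{m}(\nabla F_\ast)(u_i,u_i)+\sum_{i=1}^{m}(\nabla F_\ast)(Ju_i,Ju_i)+\sum_{a=1}^{2r}(\nabla F_\ast)(e_a,e_a).
\end{equation*}

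For the vertical piece, Lemma \ref{lem1}(ii) together with (\ref{e.q:2.6}) gives $(\nabla F_\ast)(u_i,u_i)=-F_\ast(T_{u_i}u_i)$, because $\hat{\nabla}_{u_i}u_i\in\Gamma(\ker F_\ast)$ is annihilated by $F_\ast$. Summing over $i$ produces $-F_\ast\bigl(\sum_{i}T_{u_i}u_i\bigr)$, which I would then rewrite in terms of the mean curvature vector $\mu^{\ker F_\ast}$ of the fibres to obtain the first term in (\ref{e.q:3.8}).

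For the two horizontal pieces, Lemma \ref{lem1}(i) applied with $X=Y=Ju_i$ (respectively $X=Y=e_a$) and the fact that $g(Ju_i,Ju_i)=g(e_a,e_a)=1$ yields
\begin{equation*}
(\nabla F_\ast)(Ju_i,Ju_i)=2(Ju_i)(\ln\lambda)\,F_\ast(Ju_i)-F_\ast(\operatorname{grad}\ln\lambda),
\end{equation*}
and similarly for $e_a$. After summation, $\sum_{i}(Ju_i)(\ln\lambda)\,Ju_i$ is the orthogonal projection of $\operatorname{grad}\ln\lambda$ onto $J(\ker F_\ast)$, while $\sum_{a}e_a(\ln\lambda)\,e_a$ is its projection onto $\mu$; I denote these $G_J$ and $G_\mu$. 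Because $F_\ast$ kills the vertical component of any vector, $F_\ast(\operatorname{grad}\ln\lambda)=F_\ast(G_J)+F_\ast(G_\mu)$, so the combined horizontal contribution has the form $2F_\ast(G_J)+2F_\ast(G_\mu)-(m+2r)\bigl(F_\ast(G_J)+F_\ast(G_\mu)\bigr)$, which I then regroup by sub-distribution to match the two terms in (\ref{e.q:3.8}) labelled $F_\ast(\operatorname{grad}\ln\lambda)\mid_{F_\ast(JV)}$ and $F_\ast(\operatorname{grad}\ln\lambda)\mid_{F_\ast(\mu)}$.

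The main obstacle is the bookkeeping in this last step: one has to assemble the contributions $2F_\ast(G_J)$, $2F_\ast(G_\mu)$, $-mF_\ast(\operatorname{grad}\ln\lambda)$ and $-2rF_\ast(\operatorname{grad}\ln\lambda)$ so that the final coefficients in front of the two horizontal projections of $\operatorname{grad}\ln\lambda$ emerge as stated. What makes the cleanup possible is exactly the observation that the vertical part of $\operatorname{grad}\ln\lambda$ drops out under $F_\ast$, so only the $J(\ker F_\ast)$ and $\mu$ components survive; this lets the horizontal trace be written purely as a linear combination of the two pieces appearing in (\ref{e.q:3.8}).
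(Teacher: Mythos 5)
Your frame, your trace split, and your use of Lemma \ref{lem1} are exactly what the paper does: its proof also takes an orthonormal basis adapted to $\ker F_{\ast}\oplus J(\ker F_{\ast})\oplus\mu$, gets the fibre contribution from (\ref{e.q:2.15}) (equivalently Lemma \ref{lem1}(ii)) and the horizontal contribution from Lemma \ref{lem1}(i) with $X=Y$ a unit vector. Your intermediate expression $2F_{\ast}(G_J)+2F_{\ast}(G_\mu)-(m+2r)F_{\ast}(\operatorname{grad}\ln\lambda)$ is correct and coincides with what the paper computes in and just before (\ref{e.q:3.10}).

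The step that does not go through as you describe it is the final regrouping. With $G_J$ and $G_\mu$ the genuine orthogonal projections of $\operatorname{grad}\ln\lambda$ onto $J(\ker F_{\ast})$ and $\mu$, your expression equals $(2-(m+2r))\bigl(F_{\ast}(G_J)+F_{\ast}(G_\mu)\bigr)$, i.e.\ the coefficient is $2-(m+2r)$, not the $\frac{2}{\lambda^{2}}-(m+2r)$ appearing in (\ref{e.q:3.8}). The paper's coefficient arises only because it reads $F_{\ast}(\operatorname{grad}\ln\lambda)\mid_{F_{\ast}(JV)}$ as the unnormalized expansion $\sum_{i}g^{\prime}(F_{\ast}(\operatorname{grad}\ln\lambda),F_{\ast}(Je_{i}))F_{\ast}(Je_{i})$ against the frame $\{F_{\ast}(Je_{i})\}$; since $F$ is conformal with dilation $\lambda$, these image vectors have length $\lambda$, so that expression equals $\lambda^{2}F_{\ast}(G_J)$ rather than $F_{\ast}(G_J)$. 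Either you adopt that convention explicitly, in which case your sum becomes $\frac{2}{\lambda^{2}}\sum_{i}g^{\prime}(\cdot,F_{\ast}(Je_{i}))F_{\ast}(Je_{i})-\cdots$ and matches the paper verbatim, or you keep the standard projection and your (correct) coefficient differs from the stated one by the placement of a factor $\lambda^{2}$. A similar normalization issue lurks in the vertical term you defer: with the usual convention $\mu^{\ker F_{\ast}}=\frac{1}{m}\sum_{i}T_{u_{i}}u_{i}$, your sum $-F_{\ast}\bigl(\sum_{i}T_{u_{i}}u_{i}\bigr)$ equals $-mF_{\ast}(\mu^{\ker F_{\ast}})$, not $-\frac{1}{m}F_{\ast}(\mu^{\ker F_{\ast}})$. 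Neither point is an error in your computation, but as written the claim that the regrouping ``matches the two terms in (\ref{e.q:3.8})'' is not literally true, and you should state which convention for the restriction and for the mean curvature vector you are using.
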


\begin{proof}
Let $\{e_{1},...,e_{m},Je_{1},...,Je_{m},\mu_{1},...,\mu_{r},J\mu_{r}%
,...,J\mu_{r}\}$ be an orthonormal basis of $\Gamma(TM)$ such that
$\{e_{1},...,e_{m}\}$ is orthonormal basis of $\Gamma(\ker F_{\ast})$,
$\{Je_{1},...,Je_{m}\}$ is orthonormal basis of $\Gamma(J\ker F_{\ast})$ and
$\{\mu_{1},...,\mu_{r},J\mu_{r},...,J\mu_{r}\}$ is orthonormal basis of
$\Gamma(\mu)$. Then the trace of second fundamental form (restriction to $\ker
F_{\ast}\times\ker F_{\ast}$) is given by%
\[
trace^{\ker F_{\ast}}\nabla F_{\ast}=\sum_{i=1}^{m}(\nabla F_{\ast}%
)(e_{i},e_{i}).
\]
Then using (\ref{e.q:2.15}) we obtain%
\begin{align}
trace^{\ker F_{\ast}}\nabla F_{\ast}  &  =-\frac{1}{m}F_{\ast}(\mu^{\ker F_{\ast}}).\label{e.q:3.9}
\end{align}
In a similar way, we have%
\[
trace^{(\ker F_{\ast})^{\perp}}\nabla F_{\ast}=\sum_{i=1}^{m}(\nabla F_{\ast
})(Je_{i},Je_{i})+\sum_{i=1}^{2r}(\nabla F_{\ast})(\mu_{i},\mu_{i}).
\]
Using Lemma \ref{lem1} (i) we arrive at%
\begin{align*}
trace^{(\ker F_{\ast})^{\perp}}\nabla F_{\ast}  &  =\sum_{i=1}^{m}2g(\operatorname{grad}\ln\lambda,Je_{i})F_{\ast}%
(Je_{i})-mF_{\ast}(\operatorname{grad}\ln\lambda)\\
&  +\sum_{i=1}^{2r}2g(\operatorname{grad}\ln\lambda,\mu_{i})F_{\ast}(\mu
_{i})-2rF_{\ast}(\operatorname{grad}\ln\lambda).
\end{align*}
Since $F$ is a conformal anti-invariant submersion, we derive%
\begin{align}
trace^{(\ker F_{\ast})^{\perp}}\nabla F_{\ast}  &  =\sum_{i=1}^{m}2\frac
{1}{\lambda^{2}}g^{\prime}(F_{\ast}(\operatorname{grad}\ln\lambda),F_{\ast
}(Je_{i}))F_{\ast}(Je_{i})-mF_{\ast}(\operatorname{grad}\ln\lambda)\nonumber\\
&  +\sum_{i=1}^{2r}2\frac{1}{\lambda^{2}}g^{\prime}(F_{\ast}%
(\operatorname{grad}\ln\lambda),F_{\ast}(\mu_{i}))F_{\ast}(\mu_{i})-2rF_{\ast
}(\operatorname{grad}\ln\lambda)\label{e.q:3.10}\\
&  =(\frac{2}{\lambda^{2}}-(m+2r))F_{\ast}(\operatorname{grad}\ln\lambda
)\mid_{F_{\ast}(JV)}+(\frac{2}{\lambda^{2}}\nonumber\\
& -(m+2r))F_{\ast}%
(\operatorname{grad}\ln\lambda)\mid_{F_{\ast}(\mu)}.\nonumber
\end{align}
Then proof follows from (\ref{e.q:3.9}) and (\ref{e.q:3.10}).
\end{proof}

From Theorem \ref{teo4} we deduce that:

\begin{theorem}
Let $F:(M^{2m+2r},g,J)\rightarrow(N^{m+2r},g^{\prime})$ be a conformal
anti-invariant submersion such that $\frac{2}{(m+2r)}\neq\lambda^{2}$, where $(M,g,J)$ is a K\"{a}hler manifold and
$(N,g^{\prime})$ is a Riemannian manifold. Then any three conditions below
imply the fourth:

$%
\begin{array}
[c]{ll}%
(i) & F\text{ is harmonic}\\
(ii) & \text{The fibres are minimal}\\
(iii) & \lambda\text{ is a constant on }\Gamma(J\ker F_{\ast})\\
(iv) & \lambda\text{ is a constant on }\Gamma(\mu).
\end{array}
$
\end{theorem}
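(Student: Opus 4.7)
The plan is to read the conclusion directly off the tension field formula (3.8) established in Theorem 3.8. That formula expresses $\tau(F)$ as a sum of three vectors in $TN$:
\[
\tau(F) = -\tfrac{1}{m}F_\ast(\mu^{\ker F_\ast}) + \bigl(\tfrac{2}{\lambda^2}-(m+2r)\bigr)F_\ast(\operatorname{grad}\ln\lambda)\!\mid_{F_\ast(JV)} + \bigl(\tfrac{2}{\lambda^2}-(m+2r)\bigr)F_\ast(\operatorname{grad}\ln\lambda)\!\mid_{F_\ast(\mu)},
\]
and my first step will be to match each of conditions (i)--(iv) to the vanishing of a distinguished piece of this expression. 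Specifically, (i) says $\tau(F)=0$; (ii) says $\mu^{\ker F_\ast}=0$, equivalent to the first term vanishing (since $F_\ast$ is injective on horizontal vectors, and the mean curvature vector $\mu^{\ker F_\ast}$ is horizontal); (iii) says $X(\lambda)=0$ for every $X\in\Gamma(J\ker F_\ast)$, i.e. the $J(\ker F_\ast)$-part of $\operatorname{grad}\ln\lambda$ vanishes, which is exactly the vanishing of the second term; and (iv) is the analogous statement for the $\mu$-component, equivalent to the third term vanishing.

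The second step will be to check each of the four implications ``any three $\Rightarrow$ the fourth'' by substituting into the displayed formula. If (ii), (iii), (iv) all hold, the right-hand side is identically zero, giving (i) immediately. If (i), (iii), (iv) hold, formula (3.8) reduces to $0=-\tfrac{1}{m}F_\ast(\mu^{\ker F_\ast})$; injectivity of $F_\ast$ on $(\ker F_\ast)^\perp$ then forces $\mu^{\ker F_\ast}=0$, which is (ii). For the other two implications I use that the two gradient pieces live in the orthogonal subbundles $F_\ast(J\ker F_\ast)$ and $F_\ast(\mu)$ of $TN$ by (\ref{e.q:3.3}); under (i) and (ii) formula (3.8) becomes
\[
0 = \bigl(\tfrac{2}{\lambda^2}-(m+2r)\bigr)\bigl[F_\ast(\operatorname{grad}\ln\lambda)\!\mid_{F_\ast(JV)} + F_\ast(\operatorname{grad}\ln\lambda)\!\mid_{F_\ast(\mu)}\bigr],
\]
and if additionally (iii) holds, the first term inside the bracket is zero, so we conclude that the second term vanishes provided the scalar coefficient is nonzero. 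This is exactly where the hypothesis $\tfrac{2}{m+2r}\neq\lambda^2$ enters: it guarantees $\tfrac{2}{\lambda^2}-(m+2r)\neq 0$, and hence yields (iv). The symmetric argument with (iv) in place of (iii) delivers (iii) from (i)+(ii)+(iv).

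There is no real obstacle here; the result is essentially a bookkeeping corollary of (3.8) once the three terms are recognized as lying in mutually recognizable subbundles and once the nondegeneracy assumption on $\lambda$ is invoked to cancel the scalar factor in the last two implications.
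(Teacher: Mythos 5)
Your proof is correct and follows essentially the same route as the paper, namely reading all four implications directly off the tension field formula (3.8) of Theorem 4.1. In fact you are more thorough than the paper, which only writes out the case (i)+(ii)+(iii) $\Rightarrow$ (iv); your identification of each condition with the vanishing of a term of $\tau(F)$, the use of injectivity of $F_{\ast}$ on the horizontal distribution, and the explicit invocation of $\frac{2}{m+2r}\neq\lambda^{2}$ to cancel the scalar factor are exactly the steps the paper leaves implicit.
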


\begin{proof}
From (\ref{e.q:3.8}), we have%
\[%
\begin{array}
[c]{ll}%
\tau(F)= & -\frac{1}{m}F_{\ast}(\mu^{\ker F_{\ast}})+(\frac{2}{\lambda^{2}%
}-(m+2r))F_{\ast}(\operatorname{grad}\ln\lambda)\mid_{F_{\ast}(JV)}\\
& +(\frac{2}{\lambda^{2}}-(m+2r))F_{\ast}(\operatorname{grad}\ln\lambda
)\mid_{F_{\ast}(\mu)}.
\end{array}
\]
Now, if we have $(i),(ii)$ and $(iii)$ then $\lambda$ is a constant on
$\Gamma(\mu)$.
\end{proof}
We also have the following result.
\begin{corollary}Let $F$ be a conformal anti-invariant submersion from a K\"{a}hler manifold
$(M,g,J)$ to a Riemannian manifold $(N,g^{\prime})$. If $\frac{2}{(m+2r)}=\lambda^{2}$ then $F$ is harmonic if and only if the fibres  are minimal.
\end{corollary}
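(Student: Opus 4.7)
The plan is to derive this corollary as a direct consequence of the tension field formula established in Theorem 3.11. My goal is simply to substitute the hypothesis into that formula and eliminate the dilation-dependent terms.

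First, I would recall the identity from Theorem 3.11:
\[
\tau(F)= -\frac{1}{m}F_{\ast}(\mu^{\ker F_{\ast}})+\Bigl(\tfrac{2}{\lambda^{2}}-(m+2r)\Bigr)F_{\ast}(\operatorname{grad}\ln\lambda)\mid_{F_{\ast}(JV)}+\Bigl(\tfrac{2}{\lambda^{2}}-(m+2r)\Bigr)F_{\ast}(\operatorname{grad}\ln\lambda)\mid_{F_{\ast}(\mu)}.
\]
Under the assumption $\frac{2}{m+2r}=\lambda^{2}$, equivalently $\frac{2}{\lambda^{2}}=m+2r$, so the coefficient $\frac{2}{\lambda^{2}}-(m+2r)$ vanishes identically. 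Both horizontal terms in the tension field disappear, leaving
\[
\tau(F) = -\frac{1}{m}F_{\ast}(\mu^{\ker F_{\ast}}).
\]

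Next, I would invoke the definition of harmonicity: $F$ is harmonic if and only if $\tau(F)=0$. By the displayed identity, this is equivalent to $F_{\ast}(\mu^{\ker F_{\ast}})=0$. Since the mean curvature vector $\mu^{\ker F_{\ast}}$ of the fibres is a horizontal vector field (it lies in $(\ker F_{\ast})^{\perp}$, because it is the trace of the second fundamental form $T$ which maps vertical vectors to horizontal ones), and since $F_{\ast}$ restricts to a (conformal, hence injective) isomorphism on $(\ker F_{\ast})^{\perp}$, the condition $F_{\ast}(\mu^{\ker F_{\ast}})=0$ is equivalent to $\mu^{\ker F_{\ast}}=0$, i.e., the fibres are minimal.

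Combining the two equivalences yields the claim. There is essentially no obstacle here since the cancellation is algebraic; the only point that requires a brief justification is that $F_{\ast}$ is injective on horizontal vectors, which follows directly from Definition 2.1 and the fact that $\lambda>0$.
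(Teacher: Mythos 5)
Your proposal is correct and follows exactly the route the paper intends: the corollary is stated as an immediate consequence of the tension field formula in Theorem \ref{teo4}, where the hypothesis $\frac{2}{m+2r}=\lambda^{2}$ kills the coefficient of both gradient terms, leaving $\tau(F)=-\frac{1}{m}F_{\ast}(\mu^{\ker F_{\ast}})$. Your added remark that $F_{\ast}$ is injective on the horizontal distribution (so $F_{\ast}(\mu^{\ker F_{\ast}})=0$ forces $\mu^{\ker F_{\ast}}=0$) is a point the paper leaves implicit, and it is correctly justified.
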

Now we obtain necessary and sufficient condition for conformal anti-invariant
submersion to be totally geodesic. We recall that a differentiable map $F$
between two Riemannian manifolds is called totally geodesic if
\[
(\nabla F_{\ast})(X,Y)=0,\mbox{ }\text{for all }X,Y\in\Gamma(TM).
\]
A geometric interpretation of a totally
geodesic map is that it maps every geodesic in the total space into a geodesic
in the base space in proportion to arc lengths.

\begin{theorem}
Let $F$ be a conformal anti-invariant submersion from a K\"{a}hler manifold
$(M,g,J)$ to a Riemannian manifold $(N,g^{\prime})$. Then $F$ is a totally
geodesic map if and only if%
\begin{align}
-\nabla_{X}^{F}F_{\ast}Y& =F_{\ast}(J(A_{X}JY_{1}+\mathcal{V}\nabla_{X}%
BY_{2}+A_{X}CY_{2})+C(\mathcal{H}\nabla_{X}JY_{1}\nonumber\\
& +A_{X}BY_{2}+\mathcal{H}%
\nabla_{X}CY_{2})) \label{e.q:3.11}%
\end{align}
for any $X,Y=Y_{1}+Y_{2}\in\Gamma(TM),$ where $Y_{1}%
\in\Gamma(\ker F_{\ast})$ and $Y_{2}$ $\in\Gamma((\ker F_{\ast})^{\perp})%
$.
\end{theorem}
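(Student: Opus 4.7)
The plan is to translate the totally geodesic condition $(\nabla F_{\ast})(X,Y)=0$ into an explicit formula by expanding $F_{\ast}(\nabla_{X}^{M}Y)$ using the K\"ahler identity $\nabla J=0$ together with the decompositions coming from the anti-invariance of the vertical distribution. By (\ref{e.q:2.15}), the map $F$ is totally geodesic precisely when $\nabla_{X}^{F}F_{\ast}Y=F_{\ast}(\nabla_{X}^{M}Y)$ for all $X,Y\in\Gamma(TM)$, so everything reduces to computing the right-hand side.

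I would first decompose $Y=Y_{1}+Y_{2}$ with $Y_{1}\in\Gamma(\ker F_{\ast})$ and $Y_{2}\in\Gamma((\ker F_{\ast})^{\perp})$, and then apply $J$ to rewrite
\[
\nabla_{X}^{M}Y=-J\nabla_{X}^{M}(JY)=-J\bigl(\nabla_{X}^{M}JY_{1}+\nabla_{X}^{M}BY_{2}+\nabla_{X}^{M}CY_{2}\bigr),
\]
using (\ref{e.q:2.2}) and the decomposition $JY_{2}=BY_{2}+CY_{2}$ from (\ref{e.q:3.2}). The key observation is that the three vectors $JY_{1}$, $BY_{2}$, $CY_{2}$ lie in distinguished subspaces: $JY_{1}\in J(\ker F_{\ast})\subset(\ker F_{\ast})^{\perp}$ by Definition \ref{def3.1}, $BY_{2}\in\ker F_{\ast}$, and $CY_{2}\in\mu\subset(\ker F_{\ast})^{\perp}$.

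Next I would split each $\nabla_{X}^{M}(\cdot)$ into its horizontal and vertical parts via the O'Neill-type formulas (\ref{e.q:2.8}) and (\ref{e.q:2.9}): for $JY_{1}$ and $CY_{2}$ (horizontal vectors) the vertical component is an $A$-type term and the horizontal component survives as $\mathcal{H}\nabla_{X}$; for $BY_{2}$ (vertical) the situation is dual. Applying $J$ to each of the six resulting pieces, and using $J=B+C$ on horizontal vectors via (\ref{e.q:3.2}), I would then push everything through $F_{\ast}$, discarding every term that lands in $\ker F_{\ast}$. Concretely, $F_{\ast}(B(\cdot))=0$ drops out the $B$-parts of the horizontal components, while the vertical components $A_{X}JY_{1}$, $\mathcal{V}\nabla_{X}BY_{2}$, $A_{X}CY_{2}$ are all in $\ker F_{\ast}$ and must first be hit by $J$ (which lands them in $J(\ker F_{\ast})\subset(\ker F_{\ast})^{\perp}$) before $F_{\ast}$ can see them. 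Collecting the six surviving contributions in the order prescribed by the statement produces exactly (\ref{e.q:3.11}).

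The converse is automatic by reading the same chain of equalities backwards. The principal obstacle is purely bookkeeping: one must keep rigorous track of which summand in each decomposition is horizontal versus vertical, and of when $F_{\ast}$ annihilates a term directly versus only after an application of $J$. No new geometric ingredient is needed beyond $\nabla J=0$, the O'Neill decomposition identities, the splittings $JY_{2}=BY_{2}+CY_{2}$ and $(\ker F_{\ast})^{\perp}=J(\ker F_{\ast})\oplus\mu$, and the fact that $\mu$ is $J$-invariant.
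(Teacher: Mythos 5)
Your proposal is correct and follows essentially the same route as the paper: both start from (\ref{e.q:2.15}), use the K\"ahler identity to write $\nabla_{X}Y=-J\nabla_{X}JY$, decompose $JY=JY_{1}+BY_{2}+CY_{2}$, split each covariant derivative into horizontal and vertical parts via the O'Neill formulas, apply $J=B+C$, and discard the vertical ($B$-type) terms killed by $F_{\ast}$. The bookkeeping you describe — which pieces survive $F_{\ast}$ directly and which only after an application of $J$ — is exactly what the paper's proof carries out.
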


\begin{proof}
Using (\ref{e.q:2.2}) and (\ref{e.q:2.15}) we have
\begin{align*}
(\nabla F_{\ast})(X,Y)  &  =\nabla_{X}^{F}F_{\ast}Y+F_{\ast}(J\nabla_{X}JY)
\end{align*}
for any $X, Y \in\Gamma(TM)$.
Then from (\ref{e.q:2.8}) and (\ref{e.q:3.2}) we get
\begin{align*}
(\nabla F_{\ast})(X,Y)&  =\nabla_{X}^{F}F_{\ast}Y+F_{\ast}(JA_{X}JY_{1}+B\mathcal{H}\nabla_{X}%
JY_{1}+C\mathcal{H}\nabla_{X}JY_{1}+BA_{X}BY_{2}\\
&  +CA_{X}BY_{2}+J\mathcal{V}\nabla_{X}BY_{2}+JA_{X}CY_{2}+B\mathcal{H}%
\nabla_{X}CY_{2}+C\mathcal{H}\nabla_{X}CY_{2})
\end{align*}
for any $Y=Y_{1}+Y_{2}\in\Gamma(TM)$, where $Y_{1}%
\in\Gamma(\ker F_{\ast})$ and $Y_{2}$ $\in\Gamma((\ker F_{\ast})^{\perp})%
$.
Thus taking into account the vertical parts, we find
\begin{align*}
(\nabla F_{\ast})(X,Y)&  =\nabla_{X}^{F}F_{\ast}Y+F_{\ast}(J(A_{X}JY_{1}+\mathcal{V}\nabla_{X}%
BY_{2}+A_{X}CY_{2})+C(\mathcal{H}\nabla_{X}JY_{1}\\
+& A_{X}BY_{2}+\mathcal{H}%
\nabla_{X}CY_{2})).
\end{align*}
Thus $(\nabla F_{\ast})(X,Y)=0$ if and only if  the equation (\ref{e.q:3.11}) is satisfied.
\end{proof}
We now present the following definition.
\begin{definition}
Let $F$ be a conformal anti-invariant submersion from a K\"{a}hler manifold
$(M,g,J)$ to a Riemannian manifold $(N,g^{\prime})$. Then $F$ is called a $(J\ker
F_{\ast},\mu)$-totally geodesic map if%
\[
(\nabla F_{\ast})(JU,X)=0,\mbox{ }\text{for }U\in\Gamma(\ker F_{\ast})\text{
and }X\in\Gamma((\ker F_{\ast})^{\perp}).
\]

\end{definition}
In the sequel we show that this notion has an important effect on the character of the conformal submersion.
\begin{theorem}
Let $F$ be a conformal anti-invariant submersion from a K\"{a}hler manifold
$(M,g,J)$ to a Riemannian manifold $(N,g^{\prime})$. Then $F$ is a $(J\ker F_{\ast
},\mu)$-totally geodesic map if and only if $F$ is horizontally homotetic map.
\end{theorem}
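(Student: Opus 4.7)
The plan is to apply Lemma \ref{lem1}(i) directly, since for any $U \in \Gamma(\ker F_{\ast})$ the vector $JU$ belongs to $\Gamma(J\ker F_{\ast}) \subseteq \Gamma((\ker F_{\ast})^{\perp})$, so both slots of $(\nabla F_{\ast})(JU,X)$ are horizontal and the lemma gives
\[
(\nabla F_{\ast})(JU,X) = JU(\ln\lambda)\,F_{\ast}(X) + X(\ln\lambda)\,F_{\ast}(JU) - g(JU,X)\,F_{\ast}(\operatorname{grad}\ln\lambda).
\]
Everything in what follows is a matter of exploiting this identity.

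For the forward implication, I would assume $F$ is horizontally homothetic, so $\mathcal{H}(\operatorname{grad}\lambda) = 0$, equivalently $\operatorname{grad}\ln\lambda \in \Gamma(\ker F_{\ast})$. Then $JU(\ln\lambda) = g(JU,\operatorname{grad}\ln\lambda) = 0$ and $X(\ln\lambda) = g(X,\operatorname{grad}\ln\lambda)=0$ because $JU$ and $X$ are horizontal while $\operatorname{grad}\ln\lambda$ is vertical; moreover $F_{\ast}(\operatorname{grad}\ln\lambda)=0$ since $F_{\ast}$ annihilates $\ker F_{\ast}$. All three terms of the displayed formula vanish, and $F$ is $(J\ker F_{\ast},\mu)$-totally geodesic.

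For the converse I would assume $(\nabla F_{\ast})(JU,X)=0$ for every $U \in \Gamma(\ker F_{\ast})$ and $X \in \Gamma((\ker F_{\ast})^{\perp})$, and aim to prove $\mathcal{H}(\operatorname{grad}\ln\lambda)=0$ by testing against the two pieces of the decomposition $(\ker F_{\ast})^{\perp} = J(\ker F_{\ast}) \oplus \mu$. First take $X \in \Gamma(\mu)$; since $\mu \perp J(\ker F_{\ast})$, we have $g(JU,X)=0$, and the identity reduces to
\[
0 = JU(\ln\lambda)\,F_{\ast}(X) + X(\ln\lambda)\,F_{\ast}(JU).
\]
Pairing this with $F_{\ast}(X)$ and using the conformality relation $g'(F_{\ast}E,F_{\ast}F) = \lambda^{2} g(E,F)$ yields $JU(\ln\lambda)\,\lambda^{2} g(X,X)=0$, so $JU(\ln\lambda)=0$ for all $U$; pairing with $F_{\ast}(JU)$ instead gives $X(\ln\lambda)=0$ for all $X \in \Gamma(\mu)$. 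Therefore $\mathcal{H}(\operatorname{grad}\ln\lambda)$ is orthogonal to both $J(\ker F_{\ast})$ and $\mu$, hence vanishes.

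The only real subtlety is the degenerate case $\mu=\{0\}$ (the conformal Lagrangian case), where the argument above produces no information. To cover it I would take $X = JV$ with $V \in \Gamma(\ker F_{\ast})$, so $g(JU,X)=g(U,V)$, and pair the resulting identity with $F_{\ast}(JW)$ for $W \in \Gamma(\ker F_{\ast})$; this gives
\[
JU(\ln\lambda)\,g(V,W) + JV(\ln\lambda)\,g(U,W) - g(U,V)\,JW(\ln\lambda) = 0,
\]
and specializing $W=V$ and unit $V$ forces $JU(\ln\lambda)=0$ for every $U$, i.e.\ $\mathcal{H}(\operatorname{grad}\ln\lambda)=0$ again. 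The main (minor) obstacle is just this bookkeeping of the two orthogonal pieces $J(\ker F_{\ast})$ and $\mu$, to make sure the argument covers both nondegenerate and Lagrangian configurations.
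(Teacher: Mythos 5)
Your proof is correct and follows essentially the same route as the paper: apply Lemma \ref{lem1}(i) to $(\nabla F_{\ast})(JU,X)$, observe that horizontal homothety kills all three terms, and for the converse pair the resulting identity with $F_{\ast}(X)$ and $F_{\ast}(JU)$ for $X\in\Gamma(\mu)$ to show $\mathcal{H}\operatorname{grad}\ln\lambda$ is orthogonal to both $\mu$ and $J(\ker F_{\ast})$. Your extra treatment of the degenerate case $\mu=\{0\}$ (taking $X=JV$ and pairing with $F_{\ast}(JW)$, then setting $W=V$) is a genuine improvement rather than mere bookkeeping: the paper's converse needs a nonzero $X\in\Gamma(\mu)$ to conclude that $\lambda$ is constant on $\Gamma(J(\ker F_{\ast}))$, so as written it silently breaks down for conformal Lagrangian submersions, a gap your computation closes.
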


\begin{proof}
For $U\in\Gamma(\ker F_{\ast})$ and $X\in\Gamma(\mu),$ from Lemma \ref{lem1} (i),
we have%
\[
(\nabla F_{\ast})(JU,X)=JU(\ln\lambda)F_{\ast}(X)+X(\ln\lambda)F_{\ast
}(JU)-g(JU,X)F_{\ast}(\operatorname{grad}\ln\lambda).
\]
From above equation, if $F$ is a horizontally homotetic then $(\nabla F_{\ast
})(JU,X)=0$. Conversely, if $(\nabla F_{\ast})(JU,X)=0$, we obtain%
\begin{equation}
JU(\ln\lambda)F_{\ast}(X)+X(\ln\lambda)F_{\ast}(JU)=0. \label{e.q:3.12}%
\end{equation}
Taking inner product in (\ref{e.q:3.12}) with $F_{\ast}(JU)$ and since $F$ is
a conformal submersion, we write
\[
g(\operatorname{grad}\ln\lambda,JU)g^{\prime}(F_{\ast}X,F_{\ast}%
JU)+g(\operatorname{grad}\ln\lambda,X)g^{\prime}(F_{\ast}JU,F_{\ast}JU)=0.
\]
Above equation implies that  $\lambda$ is a constant on $\Gamma(\mu)$. On the other
hand, taking inner product in (\ref{e.q:3.12}) with $F_{\ast}X$, we have%
\[
g(\operatorname{grad}\ln\lambda,JU)g^{\prime}(F_{\ast}X,F_{\ast}%
X)+g(\operatorname{grad}\ln\lambda,X)g^{\prime}(F_{\ast}JU,F_{\ast}X)=0.
\]
From above equation, it follows that $\lambda$ is a constant on $\Gamma(J(\ker F_{\ast}))$.
Thus $\lambda$ is a constant on $\Gamma((\ker F_{\ast})^{\perp})$. Hence proof
is complete.
\end{proof}
Here we present another result on conformal anti-invariant submersion to be totally geodesic.
\begin{theorem}
Let $F$ be a conformal anti-invariant submersion from a K\"{a}hler manifold
$(M,g,J)$ to a Riemannian manifold $(N,g^{\prime})$. Then $F$ is a totally geodesic
map if and only if

$%
\begin{array}
[c]{ll}%
(i) & T_{U}JV=0 \text{ and }\mathcal{H}\nabla_{U}JV\in\Gamma(J\ker F_{\ast}),\\
(ii) & F\text{ is horizontally homotetic map,}\\
(iii) & \hat{\nabla}_{V}BX+T_{V}CX=0\\
& T_{V}BX+\mathcal{H}\nabla_{V}CX\in\Gamma(J\ker F_{\ast})
\end{array}
$

for $X,Y\in\Gamma((\ker F_{\ast})^{\perp})$ and $U,V\in\Gamma(\ker F_{\ast})$.
\end{theorem}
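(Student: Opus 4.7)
The plan is to exploit the $C^\infty(M)$-bilinearity and symmetry of the second fundamental form $\nabla F_*$: writing every vector field as a sum of a vertical and a horizontal component, the condition $(\nabla F_*)(E,F)=0$ for all $E,F\in\Gamma(TM)$ reduces to verifying it in the three cases (horizontal, horizontal), (vertical, vertical), and (horizontal, vertical). Each case is handled by combining the K\"ahler identity $\nabla JY=J\nabla Y$ with the O'Neill--type formulas (\ref{e.q:2.6})--(\ref{e.q:2.9}), the decomposition $JX=BX+CX$ from (\ref{e.q:3.2}), and Lemma~\ref{lem1}. The crucial bookkeeping is that $F_*$ is injective on $(\ker F_*)^\perp$, that $J$ maps $J\ker F_*$ onto $\ker F_*$, and that $\mu$ is $J$-invariant.

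For the horizontal-horizontal case, Lemma~\ref{lem1}(i) reads $(\nabla F_*)(X,Y)=X(\ln\lambda)F_*Y+Y(\ln\lambda)F_*X-g(X,Y)F_*(\operatorname{grad}\ln\lambda)$. Setting $Y=X$, pairing with $F_*X$ in the $g'$-inner product, and using conformality together with $F_*(\operatorname{grad}\ln\lambda)=F_*(\mathcal{H}\operatorname{grad}\ln\lambda)$, one extracts $\lambda^2|X|^2 g(X,\mathcal{H}\operatorname{grad}\ln\lambda)=0$ for every horizontal $X$, equivalent to $\mathcal{H}\operatorname{grad}\ln\lambda=0$; the converse is immediate. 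This yields condition~(ii). For the vertical-vertical case, since $F_*V=0$, we have $(\nabla F_*)(U,V)=-F_*(\nabla_U V)=F_*(J\nabla_U JV)$ via the K\"ahler identity. Expanding $\nabla_U JV=\mathcal{H}\nabla_U JV+T_U JV$ by (\ref{e.q:2.7}) and decomposing $\mathcal{H}\nabla_U JV=P+Q$ with $P\in\Gamma(J\ker F_*)$ and $Q\in\Gamma(\mu)$, the piece $JP\in\Gamma(\ker F_*)$ is killed by $F_*$, whereas $JQ\in\Gamma(\mu)$ and $JT_UJV\in\Gamma(J\ker F_*)$ are horizontal and lie in the mutually orthogonal subbundles $\mu$ and $J\ker F_*$. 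Hence $F_*(J\nabla_U JV)=0$ forces them to vanish separately, giving condition~(i).

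For the horizontal-vertical case, Lemma~\ref{lem1}(iii) gives $(\nabla F_*)(X,V)=-F_*(\nabla_X V)=-F_*(A_X V)$, whose vanishing is equivalent to $A_X V=0$ by the injectivity of $F_*$ on the horizontal distribution. Since $\nabla F_*$ is tensorial, we may take $X$ to be a local basic lift, so that $[V,X]$ is vertical and consequently $A_X V=\mathcal{H}\nabla_V X$. Applying $\nabla_V X=-J\nabla_V JX$ and $JX=BX+CX$, the formulas (\ref{e.q:2.6}) and (\ref{e.q:2.7}) produce $\nabla_V JX=W_h+W_v$ with horizontal part $W_h=T_V BX+\mathcal{H}\nabla_V CX$ and vertical part $W_v=\hat{\nabla}_V BX+T_V CX$. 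Writing $W_h=W_h^{J\ker F_*}+W_h^{\mu}$ and taking horizontal projections yields $\mathcal{H}\nabla_V X=-JW_h^{\mu}-JW_v$, whose terms live in the mutually orthogonal horizontal subbundles $\mu$ and $J\ker F_*$ respectively; the vanishing of each summand is exactly condition~(iii).

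The main obstacle, though conceptually straightforward, is the bookkeeping of how $J$ swaps summands among $\ker F_*$, $J\ker F_*$, and $\mu$, and tracking which pieces survive after applying $F_*$. The orthogonal-decomposition step in cases (2) and (3) is what allows a single vector equation to split into the two sub-conditions listed in~(i) and~(iii); the basic-lift reduction in case~(3) is a standard consequence of the tensoriality of $\nabla F_*$.
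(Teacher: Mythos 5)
Your proof is correct and follows essentially the same route as the paper: split $(\nabla F_\ast)$ by bilinearity and symmetry into the $\mathcal{H}\times\mathcal{H}$, $\mathcal{V}\times\mathcal{V}$ and $\mathcal{H}\times\mathcal{V}$ blocks, treat the first with Lemma \ref{lem1}(i) and the other two with the K\"ahler identity, (\ref{e.q:2.6})--(\ref{e.q:2.9}), the $B,C$ decomposition (\ref{e.q:3.2}) and the orthogonality of $F_\ast(J\ker F_\ast)$ and $F_\ast(\mu)$. The only (harmless) variation is in the horizontal--horizontal block, where you obtain $X(\ln\lambda)=0$ for all horizontal $X$ in one stroke by pairing $(\nabla F_\ast)(X,X)$ with $F_\ast X$, rather than the paper's separate substitutions $Y=JX$ on $\mu$ and $V=U$ on $J\ker F_\ast$.
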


\begin{proof}
For any $U,V\in\Gamma(\ker F_{\ast}),$ from (\ref{e.q:2.2}) and (\ref{e.q:2.15})
we have%
\begin{align*}
(\nabla F_{\ast})(U,V)  &  =F_{\ast}(J\nabla_{U}JV).
\end{align*}
Then (\ref{e.q:3.2}) and (\ref{e.q:2.7}) implies that
\begin{align*}
(\nabla F_{\ast})(U,V)&  =F_{\ast}(JT_{U}JV+C\mathcal{H}\nabla_{U}JV).
\end{align*}
From above equation,  $(\nabla F_{\ast})(U,V)=0$  if and only if
\begin{equation}
F_{\ast}(JT_{U}JV+C\mathcal{H}\nabla_{U}JV)=0. \label{e.q:3.13}%
\end{equation}
This implies $T_{U}JV=0$ and $\mathcal{H}\nabla_{U}JV \in \Gamma(J\ker F_{\ast})$.
On the other hand,  from Lemma \ref{lem1} (i) we derive%
\[
(\nabla F_{\ast})(X,Y)=X(\ln\lambda)F_{\ast}(Y)+Y(\ln\lambda)F_{\ast
}(X)-g(X,Y)F_{\ast}(\operatorname{grad}\ln\lambda)
\]
for any $X,Y\in\Gamma(\mu)$. It is obvious that if $F$ is horizontally homothetic, it follows that $(\nabla F_{\ast})(X,Y)=0$. Conversely, if $(\nabla F_{\ast})(X,Y)=0$,  taking $Y=JX$ in above equation, we get%
\begin{equation}
X(\ln\lambda)F_{\ast}(JX)+JX(\ln\lambda)F_{\ast}(X)=0. \label{e.q:3.16}%
\end{equation}
Taking inner product in (\ref{e.q:3.16}) with $F_{\ast}JX$, we obtain%
\begin{equation}
g(\operatorname{grad}\ln\lambda,X)\lambda^{2}g(JX,JX)+g(\operatorname{grad}%
\ln\lambda,JX)\lambda^{2}g(X,JX)=0. \label{e.q:3.17}%
\end{equation}
From (\ref{e.q:3.17}), $\lambda$ is a constant on $\Gamma(\mu)$. On the other
hand, for $U,V\in\Gamma(\ker F_{\ast}),$ from Lemma \ref{lem1} (i) we have
\[
(\nabla F_{\ast})(JU,JV)=JU(\ln\lambda)F_{\ast}(JV)+JV(\ln\lambda)F_{\ast
}(JU)-g(JU,JV)F_{\ast}(\operatorname{grad}\ln\lambda).
\]
Again if $F$ is horizontally homothetic, then $(\nabla F_{\ast})(JU,JV)=0$.  Conversely, if $(\nabla F_{\ast})(JU,JV)=0$, putting $U$ instead of $V$ in above equation, we derive%
\begin{equation}
2JU(\ln\lambda)F_{\ast}(JU)-g(JU,JU)F_{\ast}(\operatorname{grad}\ln\lambda)=0.
\label{e.q:3.18}%
\end{equation}
Taking inner product in (\ref{e.q:3.18}) with $F_{\ast}JU$ and since $F$ is a
conformal submersion, we have%
\begin{align*}
g(JU,JU)\lambda
^{2}g(\operatorname{grad}\ln\lambda,JU)  &  =0.
\end{align*}
From above equation, $\lambda$ is a constant on $\Gamma(J\ker F_{\ast})$. Thus
$\lambda$ is a constant on $\Gamma((\ker F_{\ast})^{\perp})$. Now, for $X\in\Gamma((\ker F_{\ast})^{\perp})$ and $V\in\Gamma(\ker
F_{\ast}),$ from (\ref{e.q:2.2}) and (\ref{e.q:2.15}) we
get
\begin{align*}
(\nabla F_{\ast})(X,V)  &  =F_{\ast}(J\nabla_{V}JX).
\end{align*}
Using (\ref{e.q:3.2}) and  (\ref{e.q:2.7}) we have
\begin{align*}
(\nabla F_{\ast})(X,V) & =F_{\ast}(CT_{V}BX+J\hat{\nabla}_{V}BX+C\mathcal{H}\nabla_{V}CX+JT_{V}CX).
\end{align*}
Thus $(\nabla F_{\ast})(X,V)=0$  if and only if
\begin{align*}
F_{\ast}(CT_{V}BX+J\hat{\nabla}_{V}BX+C\mathcal{H}\nabla_{V}CX+JT_{V}CX)=0.
\end{align*}
Thus proof is complete.
\end{proof}

\section{{\protect \textbf{Decomposition Theorems}\label%
{sect-prel}}}

In this section, we obtain decomposition theorems by using the existence of
conformal anti-invariant submersions. First, we recall the following results
from \cite{PR}. Let $g$ be a Riemannian metric tensor on the manifold $B=M\times N$
and assume that the canonical foliations $D_{M}$ and $D_{N}$ intersect
perpendicularly everywhere. Then $g$ is the metric tensor of

(i) a twisted product $M\times_{f}N$ if and only if $D_{M\text{ }}$ is a
totally geodesic foliation and $D_{N}$ is a totally umbilic foliation,

(ii) a warped product $M\times_{f}N$ if and only if $D_{M\text{ }}$ is a
totally geodesic foliation and $D_{N}$ is a spheric foliation, i.e., it is
umbilic and its mean curvature vector field is parallel.

(iii) a usual product of Riemannian manifolds if and only if $D_{M}$ and
$D_{N}$ are totally geodesic foliations.

Our first decomposition theorem for a conformal anti-invariant submersion comes
from Theorem \ref{teo2} and Theorem \ref{teo3} in terms of the second
fundamental forms of such submersions.

\begin{theorem}
Let $F$ be a conformal anti-invariant submersion from a K\"{a}hler manifold
$(M,g,J)$ to a Riemannian manifold $(N,g^{\prime})$. Then $M$ is a locally
product manifold if and only if%
\begin{align*}
\frac{1}{\lambda^{2}}g^{\prime}(\nabla_{F_{\ast}X}F_{\ast}CY,F_{\ast}JV)  &
=-g(A_{X}BY,JV)+g(\mathcal{H}\operatorname{grad}\ln\lambda,CY)g(X,JV)\\
&  -g(\mathcal{H}\operatorname{grad}\ln\lambda,JV)g(X,CY)
\end{align*}
and%
\[
-\frac{1}{\lambda^{2}}g^{\prime}(\nabla_{F_{\ast}JW}F_{\ast}JV,F_{\ast
}JCX)=g(T_{V}JW,BX)+g(JW,JV)g(\mathcal{H}\operatorname{grad}\ln\lambda,JCX)
\]

for $V,W\in\Gamma(\ker F_{\ast})$ and $X,Y\in\Gamma((\ker F_{\ast})^{\perp})$.
\end{theorem}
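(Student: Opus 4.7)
The plan is to recognize that this final decomposition theorem is essentially a direct corollary of the two foliation results proved earlier, namely Theorem \ref{teo2} (characterizing when $(\ker F_{\ast})^{\perp}$ is totally geodesic) and Theorem \ref{teo3} (characterizing when $\ker F_{\ast}$ is totally geodesic), together with the standard de Rham / Ponge--Reckziegel type decomposition criterion recalled at the beginning of this section.

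First, I would invoke the classical fact (item (iii) of the list quoted from \cite{PR} at the start of the section) that a Riemannian manifold $M$ carrying two mutually orthogonal, complementary distributions is locally a Riemannian product of the leaves of those distributions if and only if \emph{both} distributions define totally geodesic foliations. In our setting, the conformal anti-invariant submersion $F$ supplies the orthogonal splitting $TM=\ker F_{\ast}\oplus(\ker F_{\ast})^{\perp}$, so $M$ is locally a product exactly when $\ker F_{\ast}$ and $(\ker F_{\ast})^{\perp}$ simultaneously define totally geodesic foliations.

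Next, I would translate each of these two totally geodesic conditions into the PDE/tensor conditions already established. By Theorem \ref{teo2}, the distribution $(\ker F_{\ast})^{\perp}$ defines a totally geodesic foliation if and only if
\begin{align*}
\frac{1}{\lambda^{2}}g'(\nabla_{F_{\ast}X}F_{\ast}CY,F_{\ast}JV)
&= -g(A_{X}BY,JV)+g(\mathcal{H}\operatorname{grad}\ln\lambda,CY)g(X,JV)\\
&\quad -g(\mathcal{H}\operatorname{grad}\ln\lambda,JV)g(X,CY),
\end{align*}
which is exactly the first displayed equation in the statement. Similarly, by Theorem \ref{teo3}, the distribution $\ker F_{\ast}$ defines a totally geodesic foliation if and only if
\[
-\frac{1}{\lambda^{2}}g'(\nabla_{F_{\ast}JW}F_{\ast}JV,F_{\ast}JCX)
= g(T_{V}JW,BX)+g(JW,JV)g(\mathcal{H}\operatorname{grad}\ln\lambda,JCX),
\]
which is the second displayed equation. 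Combining these two equivalences with the product criterion yields the claim.

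There is essentially no obstacle here because all the analytic content has already been packaged in Theorems \ref{teo2} and \ref{teo3}; the only delicate point is a bookkeeping one, namely making sure one correctly identifies the leaves of $\ker F_{\ast}$ and $(\ker F_{\ast})^{\perp}$ as the factors in the local product decomposition, and verifying that when both distributions are totally geodesic, integrability of $(\ker F_{\ast})^{\perp}$ is automatic (for a submersion $\ker F_{\ast}$ is always integrable, and a totally geodesic distribution is in particular integrable). With that in hand, the proof reduces to citing the two earlier equivalences and the Ponge--Reckziegel criterion, so the write-up can be kept short.
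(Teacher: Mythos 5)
Your proposal is correct and is exactly the argument the paper intends: it states that this theorem ``comes from Theorem \ref{teo2} and Theorem \ref{teo3}'' combined with criterion (iii) of the Ponge--Reckziegel decomposition, which is precisely your route of identifying the two displayed conditions with the totally geodesic foliation characterizations of $(\ker F_{\ast})^{\perp}$ and $\ker F_{\ast}$ respectively.
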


From Corollary \ref{cor1} and Corollary \ref{cor2}, we have the following theorem.

\begin{theorem}
Let $F$ be a conformal Lagrangian submersion from a K\"{a}hler manifold
$(M,g,J)$ to a Riemannian manifold $(N,g^{\prime})$. Then $M$ is a locally
product manifold if and only if $A_{X}JY=0$ and $T_{V}JW=0$ for $X,Y\in
\Gamma((\ker F_{\ast})^{\perp})$ and $V,W\in\Gamma(\ker F_{\ast})$.
\end{theorem}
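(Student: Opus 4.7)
The plan is to reduce the statement to the two foliation-theoretic characterizations already established in Corollary \ref{cor1} and Corollary \ref{cor2}, by invoking the standard decomposition criterion for Riemannian manifolds with two complementary orthogonal distributions.

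First I would note that since $F:M\to N$ is a (conformal) submersion, the distributions $\ker F_{\ast}$ and $(\ker F_{\ast})^{\perp}$ are smooth, mutually orthogonal, and satisfy $TM=\ker F_{\ast}\oplus(\ker F_{\ast})^{\perp}$, so they are exactly the canonical distributions $D_M$ and $D_N$ appearing in the product-structure criterion recalled at the start of Section~5. By item (iii) of that criterion, $M$ is locally a Riemannian product of a leaf of $\ker F_{\ast}$ with a leaf of $(\ker F_{\ast})^{\perp}$ if and only if both $\ker F_{\ast}$ and $(\ker F_{\ast})^{\perp}$ define totally geodesic foliations on $M$.

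Next I would invoke the two corollaries obtained earlier for the conformal Lagrangian case. Corollary \ref{cor1} gives that $(\ker F_{\ast})^{\perp}$ defines a totally geodesic foliation on $M$ if and only if $A_X JY=0$ for all $X,Y\in\Gamma((\ker F_{\ast})^{\perp})$. Dually, Corollary \ref{cor2} gives that $\ker F_{\ast}$ defines a totally geodesic foliation on $M$ if and only if $T_V JW=0$ for all $V,W\in\Gamma(\ker F_{\ast})$. Combining these two equivalences with the product-structure criterion, both conditions $A_X JY=0$ and $T_V JW=0$ hold simultaneously if and only if $M$ is locally a product manifold, which is precisely the claim.

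There is essentially no hard step here: the work has already been absorbed into Corollaries \ref{cor1} and \ref{cor2}, whose proofs used the Lagrangian hypothesis $J(\ker F_{\ast})=(\ker F_{\ast})^{\perp}$ to collapse the conformal error terms involving $\mathcal{H}\operatorname{grad}\ln\lambda$. The only point requiring a sentence of care is making explicit that the two orthogonal distributions of the submersion fit the hypotheses of the product criterion recalled from \cite{PR}, so that the two totally-geodesic-foliation statements can be assembled into a genuine local product decomposition of $M$.
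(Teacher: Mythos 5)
Your proposal is correct and is exactly the argument the paper intends: the theorem is stated in the paper as an immediate consequence of Corollary \ref{cor1} and Corollary \ref{cor2} combined with criterion (iii) of the product decomposition recalled from \cite{PR} at the beginning of Section 5. Your extra sentence checking that $\ker F_{\ast}$ and $(\ker F_{\ast})^{\perp}$ are the complementary orthogonal distributions required by that criterion is a reasonable bit of added care, but it does not change the route.
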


Next we obtain a decomposition theorem which is related to the notion of
twisted product manifold.

\begin{theorem}
Let $F$ be a conformal anti-invariant submersion from a K\"{a}hler manifold
$(M,g,J)$ to a Riemannian manifold $(N,g^{\prime})$. Then $M$ is a locally
twisted product manifold of the form $M_{(\ker F_{\ast})}\times_{\lambda
}M_{(\ker F_{\ast})^{\perp}}$ if and only if
\begin{equation}
-\frac{1}{\lambda^{2}}g^{\prime}(\nabla_{F_{\ast}JW}F_{\ast}JV,F_{\ast
}JCX)=g(T_{V}JW,BX)+g(JW,JV)g(\mathcal{H}\operatorname{grad}\ln\lambda,JCX)
\label{e.q:4.1}%
\end{equation}
and%
\begin{equation}
g(X,Y)H=-BA_{X}BY+CY(\ln\lambda)BX-B\mathcal{H}\operatorname{grad}\ln\lambda
g(X,CY)-JF^{\ast}(\nabla_{F_{\ast}X}F_{\ast CY}) \label{e.q:4.2}%
\end{equation}
for $V,W\in\Gamma(\ker F_{\ast})$ and $X,Y\in\Gamma((\ker F_{\ast})^{\perp}),$
where $M_{(\ker F_{\ast})^{\perp}}$ and $M_{(\ker F_{\ast})}$ are integral
manifolds of the distributions $(\ker F_{\ast})^{\perp}$ and $(\ker F_{\ast})$
and $H$ is the mean curvature vector field of $M_{(\ker F_{\ast})^{\perp}}$.
\end{theorem}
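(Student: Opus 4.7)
The plan is to invoke the Ponge--Reckziegel characterization of twisted products recalled just before the theorem: with $D_M=\ker F_\ast$ and $D_N=(\ker F_\ast)^\perp$, the manifold $M$ is locally a twisted product $M_{(\ker F_\ast)}\times_\lambda M_{(\ker F_\ast)^\perp}$ if and only if $\ker F_\ast$ defines a totally geodesic foliation on $M$ and $(\ker F_\ast)^\perp$ defines a totally umbilic foliation whose mean curvature vector field is $H$. The first condition (\ref{e.q:4.1}) is exactly the equivalent characterization of "$\ker F_\ast$ is totally geodesic" established in Theorem \ref{teo3}, so the whole task reduces to showing that (\ref{e.q:4.2}) is equivalent to the identity
\[
\mathcal{V}\nabla_X Y=g(X,Y)\,H\qquad (X,Y\in\Gamma((\ker F_\ast)^\perp))
\]
for a fixed vertical field $H$.

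To extract $\mathcal{V}\nabla_X Y$, I would use the K\"ahler condition (\ref{e.q:2.2}) together with the decomposition (\ref{e.q:3.2}) and rewrite
\[
\nabla_X Y=-J\nabla_X JY=-J(\nabla_X BY)-J(\nabla_X CY).
\]
Then (\ref{e.q:2.8}) and (\ref{e.q:2.9}) give
\[
\nabla_X BY=A_X BY+\mathcal{V}\nabla_X BY,\qquad \nabla_X CY=\mathcal{H}\nabla_X CY+A_X CY,
\]
where $A_X BY$ and $\mathcal{H}\nabla_X CY$ are horizontal while $\mathcal{V}\nabla_X BY$ and $A_X CY$ are vertical. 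Because $\ker F_\ast$ is anti-invariant, $J$ sends the vertical pieces $\mathcal{V}\nabla_X BY$ and $A_X CY$ into $J(\ker F_\ast)\subset(\ker F_\ast)^\perp$; these contribute nothing to $\mathcal{V}\nabla_X Y$. The only vertical contributions are the $B$-parts of $J$ applied to the two horizontal pieces, so
\[
\mathcal{V}\nabla_X Y=-B(A_X BY)-B(\mathcal{H}\nabla_X CY).
\]

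The next step rewrites $B(\mathcal{H}\nabla_X CY)$ by applying Lemma \ref{lem1}(i) and (\ref{e.q:2.15}). From
\[
(\nabla F_\ast)(X,CY)=\nabla^F_{F_\ast X}F_\ast CY-F_\ast(\mathcal{H}\nabla_X CY)
\]
and Lemma \ref{lem1}(i), inverting $F_\ast$ on horizontal vectors (the horizontal lift, denoted $F^{\ast}$) yields
\[
\mathcal{H}\nabla_X CY=F^{\ast}\!\bigl(\nabla^F_{F_\ast X}F_\ast CY\bigr)-X(\ln\lambda)\,CY-CY(\ln\lambda)\,X+g(X,CY)\,\mathcal{H}\operatorname{grad}\ln\lambda.
\]
Applying $B$ and using that $\mu$ is $J$-invariant (so that $B(CY)=0$, while $BX$ is the vertical component of $JX$), the $X(\ln\lambda)$-term drops out and one obtains
\[
B(\mathcal{H}\nabla_X CY)=BF^{\ast}\!\bigl(\nabla^F_{F_\ast X}F_\ast CY\bigr)-CY(\ln\lambda)\,BX+g(X,CY)\,B(\mathcal{H}\operatorname{grad}\ln\lambda).
\]
Substituting this back into the formula for $\mathcal{V}\nabla_X Y$ produces precisely the right-hand side of (\ref{e.q:4.2}) (the term $JF^{\ast}$ being read through its vertical component $BF^{\ast}$, since the $C$-part of $J$ applied to a horizontal vector is horizontal and plays no role in $\mathcal{V}\nabla_X Y$). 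Thus the umbilic condition $\mathcal{V}\nabla_X Y=g(X,Y)H$ is equivalent to (\ref{e.q:4.2}), and combined with the reduction of (\ref{e.q:4.1}) via Theorem \ref{teo3}, the Ponge--Reckziegel criterion closes the argument.

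The main obstacle is the bookkeeping: one has to separate the four pieces of $-J\nabla_X JY$ cleanly into $J(\ker F_\ast)$ and $\mu$ components, then use the conformal correction terms from Lemma \ref{lem1}(i) without double-counting, and finally rely on the key cancellation $B(CY)=0$ (coming from $J$-invariance of $\mu$) which kills the would-be $X(\ln\lambda)$ contribution and makes the resulting identity symmetric in the way required for $H$ to genuinely depend only on the point of $M$ and not on $X$, $Y$.
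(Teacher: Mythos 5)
Your proposal is correct and follows essentially the same route as the paper: both rest on the Ponge--Reckziegel criterion, identify (\ref{e.q:4.1}) with the totally geodesic condition of Theorem \ref{teo3}, and unwind the umbilicity of $(\ker F_{\ast})^{\perp}$ into (\ref{e.q:4.2}) via the K\"{a}hler identity, the O'Neill tensors and Lemma \ref{lem1}(i). The only cosmetic difference is that you compute the vector $\mathcal{V}\nabla_{X}Y$ directly, whereas the paper computes the scalar $g(\nabla_{X}Y,V)$ and converts through $J$ at the last step.
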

\begin{proof}
For $V,W\in\Gamma(\ker F_{\ast})$ and $X\in\Gamma((\ker F_{\ast})^{\perp})$,
from (\ref{e.q:2.1}), (\ref{e.q:2.2}), (\ref{e.q:2.7}) and (\ref{e.q:3.2}) we have%
\begin{align*}
g(\nabla_{V}W,X)  &  =g(T_{V}JW,BX)+g(\mathcal{H}\nabla_{V}JW,CX).
\end{align*}
Since $\nabla$ is torsion free and $[V,JW]\in \Gamma(\ker F_{\ast})$, we obtain
\begin{align*}
g(\nabla_{V}W,X)&  =g(T_{V}JW,BX)+g(\nabla_{JW}V,CX).
\end{align*}
Using (\ref{e.q:2.2}) and (\ref{e.q:2.9}) we get%
\begin{align*}
g(\nabla_{V}W,X)&  =g(T_{V}JW,BX)+g(\nabla_{JW}JV,JCX).
\end{align*}
Since $F$ is a conformal submersion, using (\ref{e.q:2.15}) and Lemma \ref{lem1}
(i) we arrive at%
\begin{align*}
g(\nabla_{V}W,X)  &  =g(T_{V}JW,BX)-\frac{1}{\lambda^{2}}g(\mathcal{H}\operatorname{grad}%
\ln\lambda,JW)g^{\prime}(F_{\ast}JV,F_{\ast}JCX)\\
&  -\frac{1}{\lambda^{2}%
}g(\mathcal{H}\operatorname{grad}\ln\lambda,JV)g^{\prime}(F_{\ast}JW,F_{\ast
}JCX)\\
& +g(JW,JV)\frac{1}{\lambda^{2}}g^{\prime}(F_{\ast}\operatorname{grad}%
\ln\lambda,F_{\ast}JCX)+\frac{1}{\lambda^{2}}g^{\prime}(\nabla_{F_{\ast}%
JW}F_{\ast}JV,F_{\ast}JCX).
\end{align*}
Moreover, using Definition \ref{def3.1} and (\ref{e.q:3.4}) we conclude that%
\begin{align*}
g(\nabla_{V}W,X)& =g(T_{V}JW,BX)+g(JW,JV)g(\mathcal{H}\operatorname{grad}%
\ln\lambda,JCX)\\
& +\frac{1}{\lambda^{2}}g^{\prime}(\nabla_{F_{\ast}JW}F_{\ast
}JV,F_{\ast}JCX).
\end{align*}
Thus it follows that  $M_{(\ker F_{\ast})}$ is totally geodesic if and only if the equation (\ref{e.q:4.1}) is satisfied. On
the other hand, for $V,W\in\Gamma(\ker F_{\ast})$ and $X\in\Gamma((\ker
F_{\ast})^{\perp}),$ from (\ref{e.q:2.1}), (\ref{e.q:2.2}), (\ref{e.q:2.8}), (\ref{e.q:2.9})
and (\ref{e.q:3.2}) we obtain
\begin{align*}
g(\nabla_{X}Y,V)  &  =g(A_{X}BY+\mathcal{V}\nabla_{X}BY,JV)+g(A_{X}CY+\mathcal{H}\nabla
_{X}CY,JV).
\end{align*}
Thus from (\ref{e.q:3.1}) we get%
\[
g(\nabla_{X}Y,V)=g(A_{X}BY,JV)+g(\mathcal{H}\nabla_{X}CY,JV).
\]
Since $F$ is a conformal submersion, using (\ref{e.q:2.15}) and Lemma \ref{lem1}
(i) we arrive at%
\begin{align*}
g(\nabla_{X}Y,V)  &  =g(A_{X}BY,JV)-\frac{1}{\lambda^{2}}g(\mathcal{H}\operatorname{grad}%
\ln\lambda,X)g^{\prime}(F_{\ast}CY,F_{\ast}JV)\\
& -\frac{1}{\lambda^{2}%
}g(\mathcal{H}\operatorname{grad}\ln\lambda,CY)g^{\prime}(F_{\ast}X,F_{\ast
}JV) +\frac{1}{\lambda^{2}}g(X,CY)g^{\prime}(F_{\ast}(\operatorname{grad}%
\ln\lambda),F_{\ast}JV)\\
& +\frac{1}{\lambda^{2}}g^{\prime}(\nabla_{F_{\ast}%
X}F_{\ast}CY,F_{\ast}JV).
\end{align*}
Moreover, using Definition \ref{def3.1} and (\ref{e.q:3.4}) we derive%
\begin{align*}
g(\nabla_{X}Y,V)  &  =g(A_{X}BY,JV)-g(\mathcal{H}\operatorname{grad}\ln
\lambda,CY)g(X,JV)\\
&  +g(\mathcal{H}\operatorname{grad}\ln\lambda,JV)g(X,CY)+\frac{1}{\lambda^{2}}g^{\prime}(\nabla_{F_{\ast}X}F_{\ast}CY,F_{\ast}JV).
\end{align*}
Using (\ref{e.q:2.2}) we conclude that $M_{(\ker F_{\ast})^{\perp}}$ is totally umbilical if and only if the equation (\ref{e.q:4.2}) is satisfied.
\end{proof}

However, in the sequel, we show that the
notion of conformal anti-invariant submersion puts some restrictions on the
total space for locally warped product manifold.

\begin{theorem}
Let $F$ be a conformal anti-invariant submersion from a K\"{a}hler manifold
$(M,g,J)$ to a Riemannian manifold $(N,g^{\prime})$ with $rank (\ker F_{\ast})>1$. If $M$ is a locally
warped product manifold of the form $M_{(\ker F_{\ast})^{\perp}}%
\times_{\lambda}M_{(\ker F_{\ast})}$, then either $F$ is horizontally homothetic submersion or the fibers are one dimensional.
\end{theorem}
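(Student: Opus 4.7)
The plan is to combine the standard warped-product formulas for the Levi-Civita connection with the K\"ahler identity $\nabla J = J\nabla$, and then to exploit the orthogonal splitting $(\ker F_{\ast})^{\perp}=J(\ker F_{\ast})\oplus\mu$ from (\ref{e.q:3.1}). In a warped product $M_{(\ker F_{\ast})^{\perp}}\times_{\lambda}M_{(\ker F_{\ast})}$ the warping function $\lambda$ is constant along the fibers, so $\operatorname{grad}\ln\lambda=\mathcal{H}\operatorname{grad}\ln\lambda$; moreover, for $V,W\in\Gamma(\ker F_{\ast})$ and horizontal $X$ the classical formulas
\[
\nabla_{V}W=\hat{\nabla}_{V}W-g(V,W)\,\mathcal{H}\operatorname{grad}\ln\lambda,\qquad \nabla_{V}X=(X\ln\lambda)\,V
\]
translate via (\ref{e.q:2.6})--(\ref{e.q:2.7}) into $T_{V}W=-g(V,W)\mathcal{H}\operatorname{grad}\ln\lambda$, $T_{V}X=(X\ln\lambda)V$, and $\mathcal{H}\nabla_{V}X=0$.

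Next I would apply the K\"ahler condition to $\nabla_{V}JV=J\nabla_{V}V$. Writing $\nabla_{V}V=T_{V}V+\hat{\nabla}_{V}V$ and splitting $JT_{V}V=B(T_{V}V)+C(T_{V}V)$ via (\ref{e.q:3.2}), while observing that $J\hat{\nabla}_{V}V\in\Gamma(J(\ker F_{\ast}))$ is horizontal, the vertical/horizontal components of $\nabla_{V}JV$ (with $JV$ horizontal) read
\begin{align*}
T_{V}JV &= B(T_{V}V),\\
\mathcal{H}\nabla_{V}JV &= C(T_{V}V)+J\hat{\nabla}_{V}V.
\end{align*}
Comparing with the warped-product values $T_{V}JV=(JV\ln\lambda)V$ and $\mathcal{H}\nabla_{V}JV=0$, and substituting $T_{V}V=-g(V,V)\mathcal{H}\operatorname{grad}\ln\lambda$, the second equation becomes
\[
J\hat{\nabla}_{V}V=g(V,V)\,C(\mathcal{H}\operatorname{grad}\ln\lambda).
\]
Here the left-hand side lies in $J(\ker F_{\ast})$ and the right-hand side in $\mu$; by the orthogonal decomposition (\ref{e.q:3.1}), both sides must vanish. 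In particular $C(\mathcal{H}\operatorname{grad}\ln\lambda)=0$, so $\mathcal{H}\operatorname{grad}\ln\lambda\in J(\ker F_{\ast})$, and we may write $\mathcal{H}\operatorname{grad}\ln\lambda=JV_{0}$ with $V_{0}\in\Gamma(\ker F_{\ast})$; then $B(\mathcal{H}\operatorname{grad}\ln\lambda)=-V_{0}$.

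Substituting into the first equation, together with $(JV\ln\lambda)=g(JV,JV_{0})=g(V,V_{0})$, gives
\[
g(V,V_{0})\,V=g(V,V)\,V_{0}\qquad\text{for every }V\in\Gamma(\ker F_{\ast}).
\]
Under the hypothesis $\operatorname{rank}(\ker F_{\ast})>1$ one can pick a nonzero $V$ orthogonal to $V_{0}$; the left-hand side then vanishes while $g(V,V)>0$, forcing $V_{0}=0$. Hence $\mathcal{H}\operatorname{grad}\ln\lambda=0$, i.e., $F$ is horizontally homothetic. The main delicate point is recognizing that the orthogonal splitting $(\ker F_{\ast})^{\perp}=J(\ker F_{\ast})\oplus\mu$ turns the single K\"ahler-derived vector identity into \emph{two} independent vanishings, one of which (the $\mu$-component) directly confines $\mathcal{H}\operatorname{grad}\ln\lambda$ to $J(\ker F_{\ast})$ and makes the rank hypothesis do its work in the last step.
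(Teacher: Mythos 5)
Your argument is, in substance, the same as the paper's: both rest on the warped--product expressions for $T$ (totally umbilic fibres with mean curvature $-\mathcal{H}\operatorname{grad}\ln\lambda$, and $T_{V}X=(X\ln\lambda)V$), push them through the K\"ahler identity $\nabla_{V}JV=J\nabla_{V}V$, and then use a polarization identity on the fibres --- your $g(V,V_{0})V=g(V,V)V_{0}$ is exactly the paper's relation (4.6) in disguise --- so that $\operatorname{rank}(\ker F_{\ast})>1$ forces the $J(\ker F_{\ast})$--component of $\mathcal{H}\operatorname{grad}\ln\lambda$ to vanish. You organize the computation as one vector identity split along $\ker F_{\ast}\oplus J(\ker F_{\ast})\oplus\mu$, whereas the paper pairs $\nabla_{U}V$ against horizontal test vectors $X\in\mu$ and $X=JU$; these are equivalent.

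One step deserves more care: you assert $\mathcal{H}\nabla_{V}JV=0$ as a ``warped--product value.'' The identity $\nabla_{V}X=(X\ln\lambda)V$ holds for $X$ a \emph{basic} (projectable) horizontal field; only its vertical part $T_{V}X=(X\ln\lambda)V$ is tensorial and hence valid for arbitrary horizontal $X$, while $\mathcal{H}\nabla_{V}X=\mathcal{H}[V,X]$ need not vanish when $X$ is not basic, and $JV$ is generally not basic. This is precisely the step you use to kill the $\mu$--component $C(\mathcal{H}\operatorname{grad}\ln\lambda)$; without it you only obtain constancy of $\lambda$ along $J(\ker F_{\ast})$. To be fair, the paper's own treatment of the case $X\in\Gamma(\mu)$ silently discards the analogous term $g(JV,\mathcal{H}\nabla_{U}CX)$, so your proof is at the same level of rigor as the published one; but if you want the ``horizontally homothetic'' conclusion in full you should either justify why $\mathcal{H}\nabla_{V}JV$ has no $\mu$--component or supply a separate argument for the constancy of $\lambda$ on $\mu$. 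The part of your proof that actually produces the dichotomy in the statement (the rank hypothesis forcing $V_{0}=0$) uses only the tensorial identity $T_{V}JV=B(T_{V}V)$ and is correct.
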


\begin{proof}
For $V,W\in\Gamma(\ker F_{\ast})$ and $X\in\Gamma((\ker F_{\ast})^{\perp})$,
from (\ref{e.q:2.2}) and (\ref{e.q:2.6}) we get%
\begin{align*}
-X(\ln\lambda)g(U,V)  &  =JV(\ln\lambda)g(U,JX).
\end{align*}
For $X\in\Gamma(\mu)$, we derive%
\[
-X(\ln\lambda)g(U,V)=0.
\]
From above equation, we conclude that  $\lambda$ is a constant on $\Gamma(\mu)$. For
$X=JU\in\Gamma(J(\ker F_{\ast}))$ we obtain%
\begin{align}
JU(\ln\lambda)g(U,V)  &  =JV(\ln\lambda)g(U,U). \label{e.q:4.3}%
\end{align}
Interchanging the roles of  $V$ and $U$ in (\ref{e.q:4.3}) we arrive at%
\begin{equation}
JV(\ln\lambda)g(U,V)=JU(\ln\lambda)g(V,V). \label{e.q:4.4}%
\end{equation}
From (\ref{e.q:4.3}) and (\ref{e.q:4.4}) we get%
\begin{equation}
JU(\ln\lambda)=JU(\ln\lambda)\frac{g(U,V)^{2}}{\parallel U\parallel
^{2}\parallel V\parallel ^{2}}. \label{e.q:4.6}%
\end{equation}
From (\ref{e.q:4.6}), either $\lambda$ is a constant on $\Gamma(J\ker F_{\ast
})$ or $\Gamma(J\ker F_{\ast})$ is 1-dimensional. Thus proof is complete.
\end{proof}

\begin{remark} In fact, the result implies that there  are no
conformal anti-invariant submersions from  K\"{a}hler manifold
$(M,g,J)$ the form $M_{(\ker F_{\ast})^{\perp}}%
\times_{\lambda}M_{(\ker F_{\ast})}$ to a Riemannian manifold under
certain conditions.
\end{remark}
\section{{\protect \textbf{Curvature Relations for Conformal Anti-Invariant Submersions}\label%
{sect-prel}}}

In this section, we investigate sectional curvatures of the total space, the
base space and the fibres of a conformal anti-invariant submersion. Let $F$ be a
conformal anti-invariant submersion between K\"{a}hler manifold $M$ and
Riemannian manifold $N$. We denote Riemannian curvature tensors of $M$, $N$ and any
fibre $F^{-1}(x)$ by $R_{M},R_{N}$ and $\hat{R},$ respectively.

Let $F$ be a conformal anti-invariant submersion from a K\"{a}hler manifold
$(M,g,J)$ to a Riemannian manifold $(N,g^{\prime})$. We denote by $K$ the
sectional curvature, defined for any pair of non zero orthogonal vectors $X$
and $Y$ on $M$ by the formula:%
\begin{equation}
K(X,Y)=\frac{R(X,Y,Y,X)}{\Vert X\Vert^{2}\Vert Y\Vert^{2}}. \label{e.q:5.1}%
\end{equation}
We denote sectional curvatures of $M$, $N$ and any fibre $F^{-1}(x)$ by
$K_{M},K_{N}$ and $\hat{K}$ respectively.

\begin{theorem}
\label{teo5}Let $F$ be a conformal anti-invariant submersion from a K\"{a}hler
manifold $(M,g,J)$ to a Riemannian manifold $(N,g^{\prime})$ and let
$K_{M},\hat{K}$ and $K_{N}$ be the sectional curvatures of the total space
$M,$ fibers and the base space $N,$ respectively. If $X,Y,Z,H$ are horizontal
and $U,V,W,F$ vertical vectors, then%
\begin{align}
K_{M}(U,V)  &  =\frac{1}{\lambda^{2}}K_{N}(JU,JV)-\frac{3}%
{4}\parallel \mathcal{V}\left[  JU,JV\right]  \parallel ^{2}-\frac
{\lambda^{2}}{2}\{g(\nabla_{JU}\operatorname{grad}(\frac{1}{\lambda^{2}%
}),JU)\nonumber\\
&  +g(\nabla_{JV}\operatorname{grad}(\frac{1}{\lambda^{2}}),JV)\}+\frac
{\lambda^{4}}{4}\{\parallel \operatorname{grad}(\frac{1}{\lambda^{2}})\parallel^{2}\nonumber\\
&  +\parallel
JU(\frac{1}{\lambda^{2}})JV-JV(\frac{1}{\lambda^{2}})JU\parallel^{2}\},\label{e.q:5.2}
\end{align}%
\begin{align}
K_{M}(X,Y)  &  =\hat{K}(BX,BY)+\frac{1}{\lambda^{2}}%
K_{N}(CX,CY)-\frac{3}{4}\parallel \mathcal{V}\left[  CX,CY\right]
\parallel ^{2}\nonumber\\
&  +\frac{\lambda^{2}}{2}\{g(CX,CY)g(\nabla_{CY}\operatorname{grad}(\frac
{1}{\lambda^{2}}),CX)\nonumber\\
&  -g(CY,CY)g(\nabla_{CX}\operatorname{grad}(\frac{1}{\lambda^{2}%
}),CX)+g(CY,CX)g(\nabla_{CX}\operatorname{grad}(\frac{1}{\lambda^{2}%
}),CY)\nonumber\\
&  -g(CX,CX)g(\nabla_{CY}\operatorname{grad}(\frac{1}{\lambda^{2}%
}),CY)\}\nonumber\\
&  +\frac{\lambda^{4}}{4}%
\{(g(CX,CX)g(CY,CY)-g(CY,CX)g(CX,CY))\parallel \operatorname{grad}(\frac{1}{\lambda
^{2}})\parallel^{2}\nonumber\\
&
+\parallel CX(\frac{1}{\lambda^{2}})CY-CY(\frac{1}{\lambda^{2}})CX\parallel^{2}\}+\parallel
T_{BX}BX\parallel ^{2}-g(T_{BY}BY,T_{BX}BX)\nonumber\\
&  +g((\nabla_{BX}A)_{CY}CY,BX)+\parallel A_{CY}BX\parallel ^{2}%
-g((\nabla_{CY}T)_{BX}CY,BX)\nonumber\\
&  -\parallel T_{BX}CY\parallel
^{2}+g((\nabla_{BY}A)_{CX}CX,BY)+\parallel
A_{CX}BY\parallel ^{2}\nonumber\\
&  -g((\nabla_{CX}T)_{BY}CX,BY)-\parallel T_{BY}CX\parallel
^{2}\label{e.q:5.3}
\end{align}
and
\begin{align}
K_{M}(X,U)  &  =\frac{1}{\lambda^{2}}K_{N}(CX,JU)-\frac{3}%
{4}\parallel \mathcal{V}\left[  CX,JU\right]  \parallel ^{2}
\nonumber\\
&  -\frac{\lambda^{2}}{2}\{g(CX,CX)g(\nabla_{JU}\operatorname{grad}(\frac
{1}{\lambda^{2}}),JU)\nonumber\\
&  +g(\nabla_{CX}\operatorname{grad}(\frac{1}{\lambda^{2}}),CX)\}+\frac
{\lambda^{4}}{4}\{g(CX,CX)\parallel \operatorname{grad}(\frac{1}{\lambda^{2}}%
)\parallel^{2}\nonumber\\
&  +\parallel CX(\frac{1}{\lambda^{2}})JU-JU(\frac{1}{\lambda^{2}})CX \parallel^{2}%
\}+g((\nabla_{BX}A)_{JU}JU,BX)+\parallel A_{JU}BX\parallel ^{2}\nonumber\\
&  -g((\nabla_{JU}T)_{BX}JU,BX)-\parallel T_{BX}JU\parallel
^{2}.\label{e.q:5.4}
\end{align}
\end{theorem}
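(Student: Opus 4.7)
The plan is to prove all three identities by the same recipe. Because $M$ is K\"{a}hler, $\nabla J=0$ together with $g(J\cdot,J\cdot)=g(\cdot,\cdot)$ yields the identity $g(R_{M}(JA,JB)JC,JD)=g(R_{M}(A,B)C,D)$, so that $R_{M}(U,V,V,U)=R_{M}(JU,JV,JV,JU)$, $R_{M}(X,Y,Y,X)=R_{M}(JX,JY,JY,JX)$, and $R_{M}(X,U,U,X)=R_{M}(JX,JU,JU,JX)$. Under the anti-invariance hypothesis, $JU,JV\in\Gamma((\ker F_{\ast})^{\perp})$; also, for horizontal $X$, the decomposition $JX=BX+CX$ splits $JX$ into a vertical and a horizontal component in a controlled way. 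Once this replacement is made, the curvature relations (\ref{e.q:2.11})--(\ref{e.q:2.14}) for horizontally conformal submersions determine each piece of the right-hand side.

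For (\ref{e.q:5.2}), I would apply (\ref{e.q:2.14}) with $X=JU$, $Y=JV$, $Z=JV$, $H=JU$. The commutator block of (\ref{e.q:2.14}) truncates because $[JV,JV]=[JU,JU]=0$, leaving only $-\tfrac{3}{4}\|\mathcal{V}[JU,JV]\|^{2}$; the $\lambda^{2}/2$ block reduces (using $g(JU,JV)=0$, $g(JU,JU)=g(JV,JV)=1$ after passing to unit orthogonal representatives) to the two surviving Hessian terms shown; and the $\lambda^{4}/4$ block collapses to the two norms displayed, yielding (\ref{e.q:5.2}) verbatim.

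For (\ref{e.q:5.3}), I expand $R_{M}(BX+CX,BY+CY,BY+CY,BX+CX)$ and group the sixteen resulting terms by the vertical/horizontal character of their slots. The diagonal blocks split as $R_{M}(BX,BY,BY,BX)+R_{M}(CX,CY,CY,CX)+R_{M}(BX,CY,CY,BX)+R_{M}(CX,BY,BY,CX)$ after using pair symmetry $g(R(A,B)C,D)=g(R(C,D)A,B)$ to identify or cancel the cross terms. The purely vertical block is evaluated by (\ref{e.q:2.11}), which reproduces $\hat{K}(BX,BY)$ together with the $\|T_{BX}BX\|^{2}$ and $g(T_{BY}BY,T_{BX}BX)$ contributions; the purely horizontal block is evaluated by (\ref{e.q:2.14}), producing the $K_{N}(CX,CY)/\lambda^{2}$ term along with the $\lambda^{2}/2$ Hessian block and the $\lambda^{4}/4$ norm block exactly as listed; and each of the two mixed blocks is evaluated by (\ref{e.q:2.13}), producing the $(\nabla_{BX}A)_{CY}CY$, $\|A_{CY}BX\|^{2}$, $(\nabla_{CY}T)_{BX}CY$, $\|T_{BX}CY\|^{2}$ quadruple (and the symmetric quadruple with $BY,CX$). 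Identity (\ref{e.q:5.4}) is obtained analogously: expand $R_{M}(BX+CX,JU,JU,BX+CX)$ into $R_{M}(BX,JU,JU,BX)+R_{M}(CX,JU,JU,CX)$ plus cross terms that combine by curvature symmetry, then apply (\ref{e.q:2.13}) to the mixed block and (\ref{e.q:2.14}) to the horizontal block.

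The main obstacle will be the bookkeeping in (\ref{e.q:5.3}): sixteen multilinear terms must be regrouped using the pair and antisymmetry properties of $R_{M}$, and (\ref{e.q:2.13}) is stated with a specific ordering of vertical/horizontal slots, so using it for blocks like $R_{M}(BX,CY,CY,BX)$ requires one re-ordering that must be tracked with correct signs. A secondary technical point is that the final term $\lambda^{2}g(A_{X}Y,U)g(V,\operatorname{grad}_{\mathcal{V}}(1/\lambda^{2}))$ in (\ref{e.q:2.13}), when specialised via Proposition~2.3 to $A_{CY}CY$ or $A_{JU}JU$, produces $\operatorname{grad}_{\mathcal{V}}(1/\lambda^{2})$-contributions that must be verified to be absorbed into (or to vanish in) the stated formulas; otherwise the extraneous pieces would have to appear explicitly. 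Once these points are handled, the three identities follow by direct substitution.
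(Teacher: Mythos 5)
Your proposal follows essentially the same route as the paper's proof: use the K\"{a}hler invariance $K_{M}(\cdot,\cdot)=K_{M}(J\cdot,J\cdot)$, split $JX=BX+CX$ via (\ref{e.q:3.2}), and evaluate the purely vertical, purely horizontal and mixed blocks by the curvature relations (\ref{e.q:2.11}), (\ref{e.q:2.14}) and (\ref{e.q:2.13}) respectively. The technical points you flag --- the cross terms in the sixteen-term expansion behind (\ref{e.q:5.3}), the slot ordering in (\ref{e.q:2.13}), and the residual $\operatorname{grad}_{\mathcal{V}}(\frac{1}{\lambda^{2}})$ contribution --- are in fact passed over silently in the paper, which simply asserts the four-term decomposition (\ref{e.q:5.5}), so your bookkeeping concerns are legitimate but do not constitute a departure from the published argument.
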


\begin{proof}
 Since $M$ is a K\"{a}hler manifold, we have
$K_{M}(U,V)=K_{M}(JU,JV).
$
Considering (\ref{e.q:2.11}) and (\ref{e.q:5.1}), we obtain
\begin{align*}
K_{M}(U,V)  &  =K_{M}(JU,JV)=g(R_{M}(JU,JV)JV,JU)=\frac{1}{\lambda^{2}%
}g^{\prime}(R_{N}(JU,JV)JV,JU)\\
&  +\frac{1}{4}\{g(\mathcal{V}\left[  JU,JV\right]  ,\mathcal{V}\left[
JV,JU\right]  )-g(\mathcal{V}\left[  JV,JV\right]  ,\mathcal{V}\left[
JU,JU\right]  )\\
&  +2g(\mathcal{V}\left[  JU,JV\right]  ,\mathcal{V}\left[  JV,JU\right]
)\}\\
&  +\frac{\lambda^{2}}{2}\{g(JU,JV)g(\nabla_{JV}\operatorname{grad}(\frac
{1}{\lambda^{2}}),JU)-g(JV,JV)g(\nabla_{JU}\operatorname{grad}(\frac
{1}{\lambda^{2}}),JU)\\
&  +g(JV,JU)g(\nabla_{JU}\operatorname{grad}(\frac{1}{\lambda^{2}%
}),JV)-g(JU,JU)g(\nabla_{JV}\operatorname{grad}(\frac{1}{\lambda^{2}}),JV)\}\\
&  +\frac{\lambda^{4}}{4}%
\{(g(JU,JU)g(JV,JV)-g(JV,JU)g(JU,JV))\parallel
\operatorname{grad}(\frac{1}{\lambda
^{2}})\parallel ^{2}\\
&  +g(JU(\frac{1}{\lambda^{2}})JV-JV(\frac{1}{\lambda^{2}})JU,JU(\frac
{1}{\lambda^{2}})JV-JV(\frac{1}{\lambda^{2}})JU)\}
\end{align*}
for unit vector fields $U$ and $V$. By straightforward computations,
we get (\ref{e.q:5.1}).

 For unit vector fields $X$ and $Y,$ since $M$ is a K\"{a}hler
manifold
and using (\ref{e.q:3.2}), we have%
\begin{align}
K_{M}(X,Y)  &  =K_{M}(JX,JY)=K_{M}(BX,BY)+K_{M}(CX,CY)\label{e.q:5.5}\\
&  +K_{M}(BX,CY)+K_{M}(CX,BY).\nonumber
\end{align}
Using (\ref{e.q:2.11}), we derive%
\begin{eqnarray}
&&K_{M}(BX,BY)    =g(R_{M}(BX,BY)BY,BX)=g(\hat{R}(BX,BY)BY,BX)\nonumber\\
&&  +g(T_{BX}BY,T_{BY}BX)-g(T_{BY}BY,T_{BX}BX)\nonumber\\
&& =\hat{K}(BX,BY)+\parallel T_{BX}BY\parallel
^{2}-g(T_{BY}BY,T_{BX}BX).\label{e.q:5.6}
\end{eqnarray}
In a similar way, using (\ref{e.q:2.14}), we arrive at%
\begin{eqnarray*}
&&K_{M}(CX,CY)   =g(R_{M}(CX,CY)CY,CX)=\frac{1}{\lambda^{2}}g^{\prime}%
(R_{N}(CX,CY)CY,CX)\\
&&  +\frac{1}{4}\{g(\mathcal{V}\left[  CX,CY\right]  ,\mathcal{V}\left[
CY,CX\right]  )-g(\mathcal{V}\left[  CY,CY\right]  ,\mathcal{V}\left[
CX,CX\right]  )\\
&&  +2g(\mathcal{V}\left[  CX,CY\right]  ,\mathcal{V}\left[  CY,CX\right]
)\}\\
&&  +\frac{\lambda^{2}}{2}\{g(CX,CY)g(\nabla_{CY}\operatorname{grad}(\frac
{1}{\lambda^{2}}),CX)-g(CY,CY)g(\nabla_{CX}\operatorname{grad}(\frac
{1}{\lambda^{2}}),CX)\\
&& +g(CY,CX)g(\nabla_{CX}\operatorname{grad}(\frac{1}{\lambda^{2}%
}),CY)-g(CX,CX)g(\nabla_{CY}\operatorname{grad}(\frac{1}{\lambda^{2}}),CY)\}\\
&& +\frac{\lambda^{4}}{4}%
\{(g(CX,CX)g(CY,CY)-g(CY,CX)g(CX,CY))\parallel
\operatorname{grad}(\frac{1}{\lambda
^{2}})\parallel ^{2}\\
&& +g(CX(\frac{1}{\lambda^{2}})CY-CY(\frac{1}{\lambda^{2}})CX,CX(\frac
{1}{\lambda^{2}})CY-CY(\frac{1}{\lambda^{2}})CX)\}.
\end{eqnarray*}
Also by direct calculations, we obtain
\begin{eqnarray}
&&K_{M}(CX,CY)
=\frac{1}{\lambda^{2}}K_{N}(CX,CY)-\frac{3}{4}\parallel
\mathcal{V}\left[  CX,CY\right]  \parallel ^{2}\nonumber\\
&&  +\frac{\lambda^{2}}{2}\{g(CX,CY)g(\nabla_{CY}\operatorname{grad}(\frac
{1}{\lambda^{2}}),CX)-g(CY,CY)g(\nabla_{CX}\operatorname{grad}(\frac
{1}{\lambda^{2}}),CX)\nonumber\\
&&  +g(CY,CX)g(\nabla_{CX}\operatorname{grad}(\frac{1}{\lambda^{2}%
}),CY)-g(CX,CX)g(\nabla_{CY}\operatorname{grad}(\frac{1}{\lambda^{2}%
}),CY)\}\nonumber\\
&&  +\frac{\lambda^{4}}{4}%
\{(g(CX,CX)g(CY,CY)-g(CY,CX)g(CX,CY))\parallel \operatorname{grad}(\frac{1}{\lambda^{2}%
})\parallel ^{2}\nonumber\\
&&  +\parallel CX(\frac{1}{\lambda^{2}})CY-CY(\frac{1}{\lambda^{2}})CX,CX(\frac
{1}{\lambda^{2}})CY\parallel^2\}.\label{e.q:5.7}
\end{eqnarray}
In a similar way, using (\ref{e.q:2.1}) we have
\begin{align}
K_{M}(BX,CY)  &  =g(R_{M}(BX,CY)CY,BX)=g((\nabla_{BX}A)_{CY}CY,BX)+\parallel A_{CY}%
BX\parallel ^{2}\nonumber\\
& -g((\nabla_{CY}T)_{BX}CY,BX)-\parallel T_{BX}CY\parallel
^{2}.\label{e.q:5.8}
\end{align}
Lastly, since $M$ is a K\"{a}hler manifold and using (\ref{e.q:2.13}) we
obtain%
\begin{align}
K_{M}(CX,BY)  &  =K_{M}(BY,CX)=g(R_{M}(BY,CX)CX,BY)=g((\nabla_{BY}%
A)_{CX}CX,BY)\nonumber\\
& +\parallel A_{CX}BY\parallel
^{2}-g((\nabla_{CX}T)_{BY}CX,BY)-\parallel T_{BY}CX\parallel
^{2}.\label{e.q:5.9}
\end{align}
Writing  (\ref{e.q:5.6}), (\ref{e.q:5.7}), (\ref{e.q:5.8}) and
(\ref{e.q:5.9}) in (\ref{e.q:5.5}) we get (\ref{e.q:5.3}).

 For unit vector fields $X$ and $U,$ since $M$ is a
K\"{a}hler manifold
and from (\ref{e.q:3.2}), we have%
\begin{equation}
K_{M}(X,U)=K_{M}(JX,JU)=K_{M}(BX,JU)+K_{M}(CX,JU). \label{e.q:5.10}%
\end{equation}
Using (\ref{e.q:2.13}), we get
\begin{eqnarray}
&&K_{M}(BX,JU)    =g(R_{M}(BX,JU)JU,BX)=g((\nabla_{BX}A)_{JU}JU,BX)+\parallel A_{JU}%
BX\parallel ^{2}\label{e.q:5.11}\\
&& -g((\nabla_{JU}T)_{BX}JU,BX)-\parallel T_{BX}JU\parallel
^{2}.\nonumber
\end{eqnarray}
In a similar way, using (\ref{e.q:2.14}) we obtain%
\begin{eqnarray}
&&K_{M}(CX,JU)  =g(R_{M}(CX,JU)JU,CX)=\frac{1}{\lambda^{2}}g^{\prime}%
(R_{N}(CX,JU)JU,CX)\nonumber\\
&&  +\frac{1}{4}\{g(\mathcal{V}\left[  CX,JU\right]  ,\mathcal{V}\left[
JU,CX\right]  )-g(\mathcal{V}\left[  JU,JU\right]  ,\mathcal{V}\left[
CX,CX\right]  )\nonumber\\
&&  +2g(\mathcal{V}\left[  CX,JU\right]  ,\mathcal{V}\left[  JU,CX\right]
)\}\nonumber\\
&&  +\frac{\lambda^{2}}{2}\{g(CX,JU)g(\nabla_{JU}\operatorname{grad}(\frac
{1}{\lambda^{2}}),CX)-g(JU,JU)g(\nabla_{CX}\operatorname{grad}(\frac
{1}{\lambda^{2}}),CX)\nonumber\\
&&  +g(JU,CX)g(\nabla_{CX}\operatorname{grad}(\frac{1}{\lambda^{2}%
}),JU)-g(CX,CX)g(\nabla_{JU}\operatorname{grad}(\frac{1}{\lambda^{2}%
}),JU)\}\nonumber\\
&&  +\frac{\lambda^{4}}{4}%
\{(g(CX,CX)g(JU,JU)-g(JU,CX)g(CX,JU))\parallel
\operatorname{grad}(\frac{1}{\lambda
^{2}})\parallel ^{2}\nonumber\\
&&  +\parallel CX(\frac{1}{\lambda^{2}})JU-JU(\frac{1}{\lambda^{2}})CX \parallel^2 \}.\label{e.q:5.12}
\end{eqnarray}
If we write (\ref{e.q:5.11}) and (\ref{e.q:5.12}) in
(\ref{e.q:5.10}) and arranging the equation, we get (\ref{e.q:5.4}).
\end{proof}

From Theorem \ref{teo5}, we have the following results.

\begin{corollary}
\label{cor3}Let $F$ be a conformal anti-invariant submersion from a K\"{a}hler
manifold $(M,g,J)$ to a Riemannian manifold $(N,g^{\prime})$. Then we have,%
\begin{align*}
\hat{K}(U,V)  &  \leq\frac{1}{\lambda^{2}}K_{N}(JU,JV)-\frac{\lambda^{2}}%
{2}\{g(\nabla_{JU}\operatorname{grad}(\frac{1}{\lambda^{2}}),JU)+g(\nabla
_{JV}\operatorname{grad}(\frac{1}{\lambda^{2}}),JV)\}\\
&  +\frac{\lambda^{4}}{4}\{\parallel \operatorname{grad}(\frac{1}{\lambda^{2}}%
)\parallel ^{2}+\parallel JU(\frac{1}{\lambda^{2}})JV-JV(\frac{1}{\lambda^{2}})JU\parallel ^{2}%
\}+g(T_{V}V,T_{U}U)
\end{align*}
for $U,V\in\Gamma(\ker F_{\ast})$. The equality case is satisfied if and only
if the fibers are totally geodesic and $J\ker F_{\ast}$ is integrable.
\end{corollary}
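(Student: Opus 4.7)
The strategy is to derive the corollary as a direct consequence of equation (5.2) in Theorem \ref{teo5} together with the O'Neill-type curvature formula (\ref{e.q:2.11}) applied to the vertical distribution. Everything hinges on comparing $K_M(U,V)$ with $\hat{K}(U,V)$ and then discarding terms of fixed sign.

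First, I would specialize (\ref{e.q:2.11}) to $W=V$ and $F=U$, which (for orthonormal vertical $U,V$) reads
\begin{equation*}
K_M(U,V)=\hat{K}(U,V)+g(T_UV,T_VU)-g(T_VV,T_UU).
\end{equation*}
Because the restriction of $T$ to $\mathcal{V}\times\mathcal{V}$ is the second fundamental form of the fibres, it is symmetric in its two vertical arguments, so $T_UV=T_VU$ and therefore $g(T_UV,T_VU)=\|T_UV\|^{2}\ge 0$. Solving for the fibre sectional curvature gives
\begin{equation*}
\hat{K}(U,V)=K_M(U,V)-\|T_UV\|^{2}+g(T_VV,T_UU).
\end{equation*}

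Next, I would substitute the expression for $K_M(U,V)$ provided by (\ref{e.q:5.2}). The two non-positive contributions appearing on the right-hand side after the substitution are precisely $-\tfrac{3}{4}\|\mathcal{V}[JU,JV]\|^{2}$ (inherited from (\ref{e.q:5.2})) and $-\|T_UV\|^{2}$. Dropping both of these yields the claimed inequality, since all remaining terms match the right-hand side of the corollary verbatim.

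For the equality case, the two discarded terms must simultaneously vanish for every pair of orthonormal vertical vectors $U,V$. The vanishing of $T_UV$ for all vertical $U,V$ is exactly the statement that the second fundamental form of the fibres is zero, i.e.\ the fibres are totally geodesic. The vanishing of $\mathcal{V}[JU,JV]$ for all vertical $U,V$ means that the bracket of any two sections of $J\ker F_{\ast}$ has no vertical component, which is the integrability condition for $J\ker F_{\ast}$ in the sense used throughout the paper (cf.\ the use of $\mathcal{V}[\cdot,\cdot]$ in Theorem \ref{teo1} and in Proposition from (\ref{e.q:2.10})). The main (in fact only) obstacle is therefore purely bookkeeping: one must verify that the signs are correct, that $T_UV=T_VU$ is justified, and that the identification of the two vanishing conditions with the geometric conclusions (totally geodesic fibres and integrability of $J\ker F_{\ast}$) is carried out in the conventions used earlier in the paper.
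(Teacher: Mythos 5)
Your proposal is correct and follows essentially the same route as the paper: the paper likewise combines equation (\ref{e.q:5.2}) with O'Neill's identity $K_M(U,V)=\hat{K}(U,V)+\Vert T_UV\Vert^2-g(T_VV,T_UU)$ (cited there as Corollary 1 of \cite{O}, which is exactly your specialization of (\ref{e.q:2.11}) with $W=V$, $F=U$ and the symmetry $T_UV=T_VU$), and then drops the two non-positive terms $-\Vert T_UV\Vert^2$ and $-\tfrac{3}{4}\Vert\mathcal{V}[JU,JV]\Vert^2$. Your discussion of the equality case is, if anything, more explicit than the paper's.
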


\begin{proof}
From (\ref{e.q:5.2}), we have%
\begin{align*}
K_{M}(U,V)  &
=\frac{1}{\lambda^{2}}K_{N}(JU,JV)-\frac{3}{4}\parallel
\mathcal{V}\left[  JU,JV\right]  \parallel ^{2}-\frac{\lambda^{2}}%
{2}\{g(\nabla_{JU}\operatorname{grad}(\frac{1}{\lambda^{2}}),JU)\\
&  +g(\nabla_{JV}\operatorname{grad}(\frac{1}{\lambda^{2}}),JV)\}+\frac
{\lambda^{4}}{4}\{\parallel \operatorname{grad}(\frac{1}{\lambda^{2}})\parallel ^{2}%
+\parallel
JU(\frac{1}{\lambda^{2}})JV-JV(\frac{1}{\lambda^{2}})JU\parallel
^{2}\}.
\end{align*}
Using (\cite{O}, Corollary 1, page: 465), we get%
\begin{align}
\hat{K}(U,V)+\parallel T_{U}V\parallel ^{2}-g(T_{V}V,T_{U}U)  &  =\frac{1}{\lambda^{2}}%
K_{N}(JU,JV)-\frac{3}{4}\parallel \mathcal{V}\left[  JU,JV\right]
\parallel ^{2}\nonumber\\
&  -\frac{\lambda^{2}}{2}\{g(\nabla_{JU}\operatorname{grad}(\frac{1}%
{\lambda^{2}}),JU)+g(\nabla_{JV}\operatorname{grad}(\frac{1}{\lambda^{2}%
}),JV)\}\nonumber\\
&  +\frac{\lambda^{4}}{4}\{\parallel \operatorname{grad}(\frac{1}{\lambda^{2}}%
)\parallel ^{2}+\parallel JU(\frac{1}{\lambda^{2}})JV-JV(\frac{1}{\lambda^{2}})JU\parallel ^{2}%
\}\label{e:q:5.13}
\end{align}
which gives the assertion.
\end{proof}
We also have the following result.
\begin{corollary}
Let $F$ be a conformal anti-invariant submersion from a K\"{a}hler manifold
$(M,g,J)$ to a Riemannian manifold $(N,g^{\prime})$. Then we have,%
\begin{align*}
\hat{K}(U,V)  &
\geq\frac{1}{\lambda^{2}}K_{N}(JU,JV)-\frac{3}{4}\parallel
\mathcal{V}\left[  JU,JV\right]  \parallel ^{2}-\frac{\lambda^{2}}%
{2}\{g(\nabla_{JU}\operatorname{grad}(\frac{1}{\lambda^{2}}),JU)\\
&  +g(\nabla_{JV}\operatorname{grad}(\frac{1}{\lambda^{2}}),JV)\}-\parallel T_{U}%
V\parallel ^{2}+g(T_{V}V,T_{U}U)
\end{align*}
for $U,V\in\Gamma(\ker F_{\ast})$. The equality case is satisfied if and only
if $F$ is a homotetic submersion.
\end{corollary}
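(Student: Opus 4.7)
The plan is to exploit the exact identity already derived while proving Corollary \ref{cor3}, namely equation (\ref{e:q:5.13}). Rearranging that identity so as to isolate $\hat{K}(U,V)$ on the left produces a formula that equals the right-hand side of the claimed inequality \emph{plus} the extra block $\tfrac{\lambda^{4}}{4}\bigl\{\parallel\operatorname{grad}(1/\lambda^{2})\parallel^{2}+\parallel JU(1/\lambda^{2})JV-JV(1/\lambda^{2})JU\parallel^{2}\bigr\}$.

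The key observation is that this extra block is a sum of two squared norms and is therefore non-negative. Discarding it gives exactly the claimed lower bound on $\hat{K}(U,V)$, which establishes the inequality. Equality is attained precisely when the dropped block vanishes; since it is a sum of non-negative quantities, vanishing forces $\parallel\operatorname{grad}(1/\lambda^{2})\parallel^{2}=0$, i.e., $\operatorname{grad}(1/\lambda^{2})\equiv 0$ on $M$. On a connected manifold this means that $1/\lambda^{2}$, and hence $\lambda$ itself, is a global constant, which is precisely the definition of $F$ being a homothetic submersion. Conversely, if $\lambda$ is constant then both $\operatorname{grad}(1/\lambda^{2})$ and the directional derivatives $JU(1/\lambda^{2})$, $JV(1/\lambda^{2})$ vanish identically, so both summands in the extra block are zero and equality is attained.

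There is no substantive computational obstacle here: the heavy curvature bookkeeping has already been carried out in Theorem \ref{teo5} and in the derivation of (\ref{e:q:5.13}). The only mild subtlety worth flagging is that vanishing of the first squared norm $\parallel\operatorname{grad}(1/\lambda^{2})\parallel^{2}$ automatically implies vanishing of the second, so the single condition ``$\lambda$ is constant'' cleanly captures the full equality case and matches the statement ``$F$ is homothetic.''
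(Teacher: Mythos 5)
Your proposal is correct and follows exactly the route the paper intends (the paper states this corollary without proof, but it is the immediate companion to Corollary \ref{cor3}): rearrange identity (\ref{e:q:5.13}) to isolate $\hat{K}(U,V)$, drop the non-negative block $\frac{\lambda^{4}}{4}\{\parallel\operatorname{grad}(\frac{1}{\lambda^{2}})\parallel^{2}+\parallel JU(\frac{1}{\lambda^{2}})JV-JV(\frac{1}{\lambda^{2}})JU\parallel^{2}\}$, and read off the equality case from its vanishing. Your observation that the vanishing of the first squared norm already forces the vanishing of the second, so that equality is equivalent to $\lambda$ being constant (i.e.\ $F$ homothetic), is exactly right.
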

\begin{corollary}
\label{cor4}Let $F$ be a conformal anti-invariant submersion from a K\"{a}hler
manifold $(M,g,J)$ to a Riemannian manifold $(N,g^{\prime})$. Then we have,
\begin{align*}
K_{M}(X,Y)  &  \geq\hat{K}(BX,BY)+\frac{1}{\lambda^{2}}K_{N}%
(CX,CY)-\frac{3}{4}\parallel \mathcal{V}\left[  CX,CY\right]
\parallel
^{2}\\
&  +\frac{\lambda^{2}}{2}\{g(CX,CY)g(\nabla_{CY}\operatorname{grad}(\frac
{1}{\lambda^{2}}),CX)\\
&  -g(CY,CY)g(\nabla_{CX}\operatorname{grad}(\frac{1}{\lambda^{2}%
}),CX)+g(CY,CX)g(\nabla_{CX}\operatorname{grad}(\frac{1}{\lambda^{2}}),CY)\\
&  -g(CX,CX)g(\nabla_{CY}\operatorname{grad}(\frac{1}{\lambda^{2}}),CY)\}\\
&  +\frac{\lambda^{4}}{4}%
\{(g(CX,CX)g(CY,CY)-g(CY,CX)g(CX,CY))\parallel
\operatorname{grad}(\frac{1}{\lambda
^{2}})\parallel ^{2}\}\\
&  -g(T_{BY}BY,T_{BX}BX)-g((\nabla_{CY}T)_{BX}CY,BX)+g((\nabla_{BX}%
A)_{CY}CY,BX)\\
&  -\parallel T_{BX}CY\parallel ^{2}+g((\nabla_{BY}A)_{CX}CX,BY)-g((\nabla_{CX}T)_{BY}%
CX,BY)-\parallel T_{BY}CX\parallel ^{2}%
\end{align*}
for $X,Y\in\Gamma((\ker F_{\ast})^{\perp})$. The equality case is satisfied if
and only if $T_{BX}BX=0,$ $A_{CY}BX=0$ and $CX(\frac{1}{\lambda^{2}%
})CY-CY(\frac{1}{\lambda^{2}})CX=0$ which shows that either $\mu$ \
is one dimensional or $\lambda$ is a constant on $\mu$.
\end{corollary}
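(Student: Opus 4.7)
The plan is to derive the inequality and the equality characterization directly from the exact sectional curvature formula (\ref{e.q:5.3}) in Theorem \ref{teo5}, by isolating the terms on the right-hand side that are manifestly non-negative and discarding them. First I would rewrite (\ref{e.q:5.3}) grouping together the pieces that are squared norms, namely
\[
\|T_{BX}BX\|^{2},\qquad \|A_{CY}BX\|^{2},\qquad \|A_{CX}BY\|^{2},\qquad \tfrac{\lambda^{4}}{4}\bigl\|CX(\tfrac{1}{\lambda^{2}})CY-CY(\tfrac{1}{\lambda^{2}})CX\bigr\|^{2},
\]
and observe that these are all $\geq 0$. Dropping them from (\ref{e.q:5.3}) yields precisely the inequality stated in Corollary \ref{cor4}; note that the Gram-determinant coefficient
\[
g(CX,CX)g(CY,CY)-g(CY,CX)g(CX,CY)\;\geq\;0
\]
in the $\frac{\lambda^{4}}{4}$-bracket is retained rather than dropped, so no Cauchy--Schwarz estimate is needed there.

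For the equality case I would argue that $K_{M}(X,Y)$ equals the stated lower bound if and only if each of the four non-negative quantities above vanishes. The first two give $T_{BX}BX=0$ and $A_{CY}BX=0$; the third, $A_{CX}BY=0$, is not listed separately in the corollary because it follows from the same condition applied with the roles of $X$ and $Y$ swapped, using that the sectional curvature is symmetric in $X,Y$. The fourth yields the pointwise identity
\[
CX\bigl(\tfrac{1}{\lambda^{2}}\bigr)\,CY \;=\; CY\bigl(\tfrac{1}{\lambda^{2}}\bigr)\,CX
\]
for all $X,Y\in\Gamma((\ker F_{\ast})^{\perp})$, i.e.\ for all $CX,CY\in\Gamma(\mu)$.

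The only subtlety in the argument is interpreting this last identity as the stated dichotomy. If $\dim\mu=1$, then $CX$ and $CY$ are always proportional and the identity is automatic. Otherwise, choose any two linearly independent sections $U,V\in\Gamma(\mu)$ (which exist as sections of $\mu$ by taking $U=CX$, $V=CY$ for suitable $X,Y$); linear independence of $U$ and $V$ forces $U(\tfrac{1}{\lambda^{2}})=V(\tfrac{1}{\lambda^{2}})=0$, and varying such pairs shows that $\mathcal{H}\operatorname{grad}(\tfrac{1}{\lambda^{2}})$ has no component along $\mu$, hence $\lambda$ is constant on $\Gamma(\mu)$. This gives the final clause of the corollary. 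I expect the main obstacle to be purely bookkeeping: keeping track of which of the many curvature terms in (\ref{e.q:5.3}) are non-negative and which must be kept in the lower bound, together with the symmetry argument used to absorb the condition $A_{CX}BY=0$ into $A_{CY}BX=0$.
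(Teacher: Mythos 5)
Your proposal is correct and follows essentially the same route as the paper: starting from the exact formula (\ref{e.q:5.3}), you isolate the manifestly non-negative squared-norm terms, drop them to obtain the inequality, and characterize equality by their simultaneous vanishing. You are in fact slightly more careful than the paper, which silently discards $\parallel A_{CX}BY\parallel^{2}$ without recording the corresponding vanishing condition; your symmetry remark absorbing that condition into $A_{CY}BX=0$, and your explicit dimension-versus-constancy dichotomy for $\mu$, fill in details the paper leaves implicit.
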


\begin{proof}
By direct calculations and using (\ref{e.q:5.3})  we arrive at,%
\begin{eqnarray*}
&&K_{M}(X,Y)-\parallel T_{BX}BX\parallel ^{2}-\parallel
A_{CY}BX\parallel ^{2} -\parallel CX(\frac{1}{\lambda^{2}%
})CY-CY(\frac{1}{\lambda^{2}})CX\parallel ^{2}\\
&& =\hat{K}(BX,BY)
+\frac{1}{\lambda^{2}}K_{N}(CX,CY)-\frac{3}{4}\parallel
\mathcal{V}\left[ CX,CY\right]  \parallel
^{2}\\
&&\frac{\lambda^{2}}{2}\{  -g(CY,CY)g(\nabla_{CX}\operatorname{grad}(\frac{1}{\lambda^{2}%
}),CX)+g(CY,CX)g(\nabla_{CX}\operatorname{grad}(\frac{1}{\lambda^{2}}),CY)\\
&& -g(CX,CX)g(\nabla_{CY}\operatorname{grad}(\frac{1}{\lambda^{2}}),CY)+g(CX,CY)g(\nabla
_{CY}\operatorname{grad}(\frac{1}{\lambda^{2}}),CX)\}\\
&& +\frac{\lambda^{4}}{4}%
\{(g(CX,CX)g(CY,CY)-g(CY,CX)g(CX,CY))\parallel
\operatorname{grad}(\frac{1}{\lambda
^{2}})\parallel ^{2}\}\\
&& -g(T_{BY}BY,T_{BX}BX)-g((\nabla_{CY}T)_{BX}CY,BX)+g((\nabla_{BX}%
A)_{CY}CY,BX)\\
&& -\parallel T_{BX}CY\parallel ^{2}+g((\nabla_{BY}A)_{CX}CX,BY)-g((\nabla_{CX}T)_{BY}%
CX,BY)-\parallel T_{BY}CX\parallel ^{2}\\
&& +\parallel A_{CX}BY\parallel^2.
\end{eqnarray*}
This gives the inequality. For the equality case $\parallel T_{BX}BX\parallel ^{2}+\parallel A_{CY}BX\parallel ^{2}+\parallel CX(\frac{1}{\lambda^{2}%
})CY-CY(\frac{1}{\lambda^{2}})CX\parallel ^{2}=0$. Hence we obtain
$T_{BX}BX=0,$ $A_{CY}BX=0$ and
$CX(\frac{1}{\lambda^{2}})CY-CY(\frac{1}{\lambda^{2}})CX=0$ which
shows that either $\mu$ \ is one dimensional or $\lambda$ is a
constant on $\mu$.
\end{proof}
In a similar way, we have the following result.
\begin{corollary}
Let $F$ be a conformal anti-invariant submersion from a K\"{a}hler manifold
$(M,g,J)$ to a Riemannian manifold $(N,g^{\prime})$. Then we have,%
\begin{align*}
\text{ }K_{M}(X,Y)  &  \leq\hat{K}(BX,BY)+\frac{1}{\lambda^{2}}K_{N}%
(CX,CY)-\frac{\lambda^{2}}{2}\{g(CX,CY)g(\nabla_{CY}\operatorname{grad}%
(\frac{1}{\lambda^{2}}),CX)\\
&  -g(CY,CY)g(\nabla_{CX}\operatorname{grad}(\frac{1}{\lambda^{2}%
}),CX)+g(CY,CX)g(\nabla_{CX}\operatorname{grad}(\frac{1}{\lambda^{2}}),CY)\\
&  -g(CX,CX)g(\nabla_{CY}\operatorname{grad}(\frac{1}{\lambda^{2}}),CY)\}\\
&  +\frac{\lambda^{4}}{4}%
\{(g(CX,CX)g(CY,CY)-g(CY,CX)g(CX,CY))\parallel
\operatorname{grad}(\frac{1}{\lambda
^{2}})\parallel ^{2}\\
&  +\parallel CX(\frac{1}{\lambda^{2}})CY-CY(\frac{1}{\lambda^{2}})CX\parallel ^{2}%
\}+\parallel T_{BX}BX\parallel ^{2}-g(T_{BY}BY,T_{BX}BX)\\
&  +g((\nabla_{BX}A)_{CY}CY,BX)+\parallel A_{CY}BX\parallel ^{2}-g((\nabla_{CY}T)_{BX}CY,BX)\\
&  +g((\nabla_{BY}A)_{CX}CX,BY)+\parallel A_{CX}BY\parallel ^{2}-g((\nabla_{CX}T)_{BY}%
CX,BY)-\parallel T_{BY}CX\parallel ^{2}%
\end{align*}
for $X,Y\in\Gamma((\ker F_{\ast})^{\perp})$. The equality case is satisfied if
and only if $T_{BX}CY=0$ and $[CX,CY]\in \Gamma(\mathcal{H})$.
\end{corollary}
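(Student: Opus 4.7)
The plan is to derive this upper bound directly from the curvature formula \eqref{e.q:5.3} given in Theorem \ref{teo5}, which expresses $K_M(X,Y)$ for horizontal $X,Y$ as a sum of geometric terms. Compared to the lower bound in Corollary \ref{cor4}, where we dropped the three non-positive/non-negative squared terms $\|T_{BX}BX\|^2$, $\|A_{CY}BX\|^2$ and $\frac{\lambda^4}{4}\|CX(\tfrac{1}{\lambda^{2}})CY-CY(\tfrac{1}{\lambda^{2}})CX\|^2$ to obtain a lower estimate, here I would do the opposite: locate every term on the right-hand side of \eqref{e.q:5.3} that is manifestly non-positive and simply discard it, producing an upper estimate.

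The two manifestly non-positive contributions in \eqref{e.q:5.3} are $-\tfrac{3}{4}\|\mathcal{V}[CX,CY]\|^{2}$ and $-\|T_{BX}CY\|^{2}$. First I would isolate these, noting that all the remaining square-norm summands $\|T_{BX}BX\|^{2}$, $\|A_{CY}BX\|^{2}$, $\|A_{CX}BY\|^{2}$, $\tfrac{\lambda^{4}}{4}\|CX(\tfrac{1}{\lambda^{2}})CY-CY(\tfrac{1}{\lambda^{2}})CX\|^{2}$ and $-\|T_{BY}CX\|^{2}$ are already written explicitly with their signs on the right-hand side of the claimed inequality; the remaining trilinear/quadratic terms involving $g(T_{BY}BY,T_{BX}BX)$, the two $g((\nabla_{BX}A)_{CY}CY,BX)$ and $g((\nabla_{BY}A)_{CX}CX,BY)$ terms, and the two $(\nabla T)$ terms, also appear in both sides unchanged. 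Thus dropping exactly $-\tfrac{3}{4}\|\mathcal{V}[CX,CY]\|^{2}$ and $-\|T_{BX}CY\|^{2}$ from \eqref{e.q:5.3} yields precisely the stated upper bound.

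For the equality case, I would observe that the discarded terms are $-\tfrac{3}{4}\|\mathcal{V}[CX,CY]\|^{2}\le 0$ and $-\|T_{BX}CY\|^{2}\le 0$, so equality in the inequality forces each of them to vanish. This gives $\mathcal{V}[CX,CY]=0$, equivalently $[CX,CY]\in\Gamma(\mathcal{H})$, and $T_{BX}CY=0$. Conversely, if these two conditions hold then \eqref{e.q:5.3} reduces exactly to the right-hand side of the claimed inequality. No step is really an obstacle here; this is a bookkeeping argument and the only thing to watch is to make sure none of the seemingly negative terms on the right-hand side (such as $-g(T_{BY}BY,T_{BX}BX)$ or $-\|T_{BY}CX\|^{2}$) are sign-definite in a way that could be used to tighten the estimate. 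They are not: $-g(T_{BY}BY,T_{BX}BX)$ has indefinite sign and $-\|T_{BY}CX\|^{2}$ is already retained on the right-hand side as written, so the argument is complete as stated.
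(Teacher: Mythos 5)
Your proposal is correct and is exactly the argument the paper intends (the paper only says ``in a similar way'' after Corollary \ref{cor4}): drop the two manifestly non-positive terms $-\tfrac{3}{4}\|\mathcal{V}[CX,CY]\|^{2}$ and $-\|T_{BX}CY\|^{2}$ from \eqref{e.q:5.3} to get the upper bound, with equality precisely when both vanish. The only discrepancy is the sign of the $\tfrac{\lambda^{2}}{2}\{\cdots\}$ block, which appears with a $+$ in \eqref{e.q:5.3} but a $-$ in the corollary as printed; this is a typo in the paper's statement, and your bookkeeping (carrying that block over unchanged) is the correct reading.
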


\begin{corollary}
Let $F$ be a conformal anti-invariant submersion from a K\"{a}hler manifold
$(M,g,J)$ to a Riemannian manifold $(N,g^{\prime})$. Then we have,%
\begin{align*}
K_{M}(X,U)  &
\geq\frac{1}{\lambda^{2}}K_{N}(CX,JU)-\frac{3}{4}\parallel
\mathcal{V}\left[  CX,JU\right]  \parallel ^{2}\\
&  -\frac{\lambda^{2}}{2}\{g(CX,CX)g(\nabla_{JU}\operatorname{grad}(\frac
{1}{\lambda^{2}}),JU)\\
&  +g(\nabla_{CX}\operatorname{grad}(\frac{1}{\lambda^{2}}),CX)\}+g((\nabla
_{BX}A)_{JU}JU,BX)\\
&  -g((\nabla_{JU}T)_{BX}JU,BX)-\parallel T_{BX}JU\parallel ^{2}%
\end{align*}
for $X\in\Gamma((\ker F_{\ast})^{\perp})$ and $U\in\Gamma(\ker
F_{\ast})$. The equality case is satisfied if and only if
$A_{JU}BX=0$, $\operatorname{grad}(\frac{1}{\lambda^{2}})=0$ and $F$
horizontally homothetic submersion.
\end{corollary}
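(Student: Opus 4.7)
The plan is to derive the inequality and equality conditions directly from formula (\ref{e.q:5.4}) of Theorem \ref{teo5}, which gives an exact expression for $K_M(X,U)$ when $X \in \Gamma((\ker F_\ast)^\perp)$ and $U \in \Gamma(\ker F_\ast)$. Comparing the right-hand side of (\ref{e.q:5.4}) with the right-hand side of the inequality in the corollary, the only discrepancy consists of the three non-negative terms
\[
\tfrac{\lambda^{4}}{4}g(CX,CX)\,\|\operatorname{grad}(\tfrac{1}{\lambda^{2}})\|^{2},\qquad \tfrac{\lambda^{4}}{4}\,\|CX(\tfrac{1}{\lambda^{2}})JU-JU(\tfrac{1}{\lambda^{2}})CX\|^{2},\qquad \|A_{JU}BX\|^{2},
\]
each of which is manifestly $\geq 0$. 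Hence dropping these terms from the equality in (\ref{e.q:5.4}) immediately yields the asserted inequality, in direct analogy with the argument already carried out for Corollary \ref{cor3} and Corollary \ref{cor4}.

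For the equality case, I would argue that equality holds in the inequality if and only if the sum of the three discarded non-negative terms is zero, which forces each to vanish separately. From $\|A_{JU}BX\|^{2}=0$ one directly gets $A_{JU}BX=0$. From $g(CX,CX)\|\operatorname{grad}(\tfrac{1}{\lambda^{2}})\|^{2}=0$ one concludes $\operatorname{grad}(\tfrac{1}{\lambda^{2}})=0$ (since the statement is for an arbitrary horizontal $X$, so one may choose $X$ with $CX\neq 0$ on $\mu$; if $\mu=\{0\}$ the statement is vacuous since $CX$ would vanish identically, in which case the remaining conditions already hold). Once $\operatorname{grad}(\tfrac{1}{\lambda^{2}})=0$ is established, its horizontal component in particular vanishes, so by (\ref{eq:1.2}) the map $F$ is horizontally homothetic, and the third discarded term vanishes automatically.

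Conversely, if $A_{JU}BX=0$, $\operatorname{grad}(\tfrac{1}{\lambda^{2}})=0$ and $F$ is horizontally homothetic, then all three terms above vanish identically, so equality holds.

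No step in this argument presents a genuine obstacle, since the heavy lifting (the curvature identity (\ref{e.q:5.4})) has already been carried out in Theorem \ref{teo5}; the only mild subtlety is handling the case $CX=0$ in the equality statement, which is why the proof reads most cleanly by taking $X$ with non-trivial $\mu$-component when extracting $\operatorname{grad}(\tfrac{1}{\lambda^{2}})=0$ and then noting that the condition $A_{JU}BX=0$ and the horizontal-homotheticity of $F$ together already exhaust the equality requirements in the degenerate case.
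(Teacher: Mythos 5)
Your proposal is correct and follows essentially the same route as the paper: the authors likewise rearrange (\ref{e.q:5.4}) so that the three non-negative terms $\|A_{JU}BX\|^{2}$, $\tfrac{\lambda^{4}}{4}g(CX,CX)\|\operatorname{grad}(\tfrac{1}{\lambda^{2}})\|^{2}$ and $\tfrac{\lambda^{4}}{4}\|CX(\tfrac{1}{\lambda^{2}})JU-JU(\tfrac{1}{\lambda^{2}})CX\|^{2}$ sit on one side, drop them to obtain the inequality, and force each to vanish for equality. Your extra care with the degenerate case $CX=0$ when extracting $\operatorname{grad}(\tfrac{1}{\lambda^{2}})=0$ is a point the paper silently glosses over, but it does not change the argument.
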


\begin{proof}
By straightforward computations and using (\ref{e.q:5.4})  we obtain,%
\begin{eqnarray*}
&&K_{M}(X,U)-\parallel A_{JU}BX\parallel ^{2}-\frac{\lambda^{4}}{4}\{g(CX,CX)\parallel \operatorname{grad}%
(\frac{1}{\lambda^{2}})\parallel ^{2}\\
&&+\parallel CX(\frac{1}{\lambda^{2}})JU-JU(\frac{1}%
{\lambda^{2}})CX\parallel ^{2}\}=\frac{1}{\lambda^{2}}K_{N}(CX,JU)-\frac{3}%
{4}\parallel \mathcal{V}\left[  CX,JU\right]  \parallel ^{2}\\
&&-\frac{\lambda^{2}}{2}\{g(CX,CX)g(\nabla_{JU}\operatorname{grad}(\frac
{1}{\lambda^{2}}),JU)+g(\nabla_{CX}\operatorname{grad}(\frac{1}{\lambda^{2}%
}),CX)\}\\
&&+g((\nabla_{BX}A)_{JU}JU,BX)-g((\nabla_{JU}T)_{BX}JU,BX)-\parallel
T_{BX}JU\parallel ^{2}.
\end{eqnarray*}
This gives the inequality. For the equality case $\parallel A_{JU}BX\parallel ^{2}+\frac{\lambda^{4}}{4}%
\{g(CX,CX)\parallel
\operatorname{grad}(\frac{1}{\lambda^{2}})\parallel ^{2}+\parallel
CX(\frac {1}{\lambda^{2}})JU-JU(\frac{1}{\lambda^{2}})CX\parallel
^{2}\}=0$. Thus we derive $A_{JU}BX=0$ and
$\operatorname{grad}(\frac{1}{\lambda^{2}})=0,$ $CX(\frac
{1}{\lambda^{2}})JU-JU(\frac{1}{\lambda^{2}})CX=0$ which shows that
$F$ is horizontally homotetic.
\end{proof}
Finally we have the following inequality.
\begin{corollary}
Let $F$ be a conformal anti-invariant submersion from a K\"{a}hler manifold
$(M,g,J)$ to a Riemannian manifold $(N,g^{\prime})$. Then we have,%
\begin{align*}
K_{M}(X,U)  &  \leq\frac{1}{\lambda^{2}}K_{N}(CX,JU)-\frac{\lambda^{2}}%
{2}\{g(CX,CX)g(\nabla_{JU}\operatorname{grad}(\frac{1}{\lambda^{2}%
}),JU)\\
& +g(\nabla_{CX}\operatorname{grad}(\frac{1}{\lambda^{2}}),CX)\}\\
&  +\frac{\lambda^{4}}{4}\{g(CX,CX)\parallel \operatorname{grad}(\frac{1}{\lambda^{2}%
})\parallel ^{2}+\parallel CX(\frac{1}{\lambda^{2}})JU-JU(\frac{1}{\lambda^{2}})CX\parallel ^{2}\}\\
& +g((\nabla_{BX}A)_{JU}JU,BX)+\parallel A_{JU}BX\parallel
^{2}-g((\nabla_{JU}T)_{BX}JU,BX)
\end{align*}
for $X\in\Gamma((\ker F_{\ast})^{\perp})$ and $U\in\Gamma(\ker F_{\ast})$. The
equality case is satisfied if and only if $T_{BX}JU=0$ and $\left[
CX,JU\right]  \in\mathcal{H}$.
\end{corollary}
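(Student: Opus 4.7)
The plan is to start from the exact expression for $K_M(X,U)$ already produced in Theorem \ref{teo5}, namely equation (\ref{e.q:5.4}), and read off the inequality by discarding the manifestly non-positive terms. So the first step is simply to write (\ref{e.q:5.4}) out as the identity
\begin{align*}
K_{M}(X,U) &= \frac{1}{\lambda^{2}}K_{N}(CX,JU) -\frac{3}{4}\|\mathcal{V}[CX,JU]\|^{2}\\
& -\frac{\lambda^{2}}{2}\{g(CX,CX)g(\nabla_{JU}\operatorname{grad}(\tfrac{1}{\lambda^{2}}),JU)+g(\nabla_{CX}\operatorname{grad}(\tfrac{1}{\lambda^{2}}),CX)\}\\
& +\frac{\lambda^{4}}{4}\{g(CX,CX)\|\operatorname{grad}(\tfrac{1}{\lambda^{2}})\|^{2}+\|CX(\tfrac{1}{\lambda^{2}})JU-JU(\tfrac{1}{\lambda^{2}})CX\|^{2}\}\\
& +g((\nabla_{BX}A)_{JU}JU,BX)+\|A_{JU}BX\|^{2}-g((\nabla_{JU}T)_{BX}JU,BX)-\|T_{BX}JU\|^{2}.
\end{align*}

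Next I would identify the two terms on the right that carry a minus sign together with a squared-norm: the term $-\tfrac{3}{4}\|\mathcal{V}[CX,JU]\|^{2}$ coming from the horizontal O'Neill/curvature identity (\ref{e.q:2.14}), and the term $-\|T_{BX}JU\|^{2}$ coming from the mixed curvature identity (\ref{e.q:2.13}). Both of these are $\leq 0$ since $g$ is positive definite. Adding them to the left-hand side yields
\[
K_{M}(X,U)+\tfrac{3}{4}\|\mathcal{V}[CX,JU]\|^{2}+\|T_{BX}JU\|^{2}\;=\;\text{(RHS of the claimed inequality)},
\]
and dropping the two non-negative quantities on the left gives exactly the upper bound claimed in the corollary.

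For the equality statement, equality in the inequality holds if and only if both non-negative quantities discarded vanish, i.e.
\[
\|\mathcal{V}[CX,JU]\|^{2}=0\quad\text{and}\quad \|T_{BX}JU\|^{2}=0.
\]
The first is equivalent to the vertical part of $[CX,JU]$ being zero, i.e. $[CX,JU]\in\Gamma(\mathcal{H})$, and the second is equivalent to $T_{BX}JU=0$, as stated. There is no genuine obstacle here: the whole corollary is a direct reading of (\ref{e.q:5.4}), and the only thing to check carefully is the sign bookkeeping of the two dropped terms and that no other term in (\ref{e.q:5.4}) has a fixed sign (in particular the gradient, $A$-type and $(\nabla_{BX}A)_{JU}JU$ terms have no definite sign and so must be retained on the right).
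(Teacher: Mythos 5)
Your proof is correct and is essentially the paper's own (implicit) argument: the corollary is read off from the identity (\ref{e.q:5.4}) by discarding the two non-positive terms $-\tfrac{3}{4}\|\mathcal{V}[CX,JU]\|^{2}$ and $-\|T_{BX}JU\|^{2}$, with equality precisely when both vanish, exactly as in the paper's proofs of the companion corollaries. Nothing is missing.
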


\end{document}